\documentclass{article}
\usepackage{graphicx} 
\usepackage{hyperref}
\usepackage{stmaryrd}
\usepackage{framed}
\usepackage{mathtools}
\usepackage{physics}
\usepackage{pgf}
\usepackage[parfill]{parskip}
\usepackage[]{comment}
\usepackage[normalem]{ulem}

\newcommand{\ropt}{r_{\rm opt}}

\usepackage{caption}
\usepackage{tikz}
\usepackage{subcaption}
\usetikzlibrary{arrows.meta, positioning}
\usetikzlibrary{calc}
\usetikzlibrary{patterns,arrows,decorations.pathreplacing,math,decorations.markings,angles}

\usepackage{amsmath} 
\usepackage{amssymb, bm}
\usepackage{bbm}
\usepackage{authblk}
\usepackage{amsthm}
\usepackage{amstext}
\usepackage{array}
\usepackage{amsfonts}
\usepackage{algorithm}
\usepackage{algpseudocode}
\usepackage[
backend=biber,
style=numeric,
sorting=none
]{biblatex}
\newtheorem{theorem}{Theorem}[section]

\newtheorem{theorem2}{Theorem}[section]

\newtheorem{remark}[theorem2]{Remark}

\DeclareMathOperator*{\argmin}{arg\,min}

\newcommand{\xb}{\mathbf{x}}

\newcommand{\pad}[2]{\frac{\partial{#1}}{\partial{#2}}}
\newcommand{\td}[2]{\frac{d{#1}}{d{#2}}}

\usepackage{geometry}
\title{Deterministic and stochastic particle methods for the Fokker-Planck equation in S--formulation with application to point set registration}

\author[1]{Klaas Willems\thanks{Corresponding author: klaas.willems@kuleuven.be}}
\author[2]{Angelo Iollo\thanks{angelo.iollo@math.u-bordeaux.fr}}
\author[3]{Giovanni Russo\thanks{giovanni.russo1@unict.it}}
\author[4]{Tommaso Taddei\thanks{tommaso.taddei@uniroma1.it}}

\affil[1]{Department of Computer Science, KU Leuven, Leuven, Belgium}
\affil[2]{IMB, UMR 5251, Univ. Bordeaux and MONHADE Inria 33400 Talence, France}
\affil[3]{Department of Mathematics and Computer Science, University of Catania, Catania, Italy}
\affil[4]{Dipartimento di Matematica, Sapienza Universita di Roma, Rome, Italy}
\date{}

\begin{document}
	
\maketitle
\begin{abstract}
	We present two particle methods for point set registration in bounded domains based on the Fokker-Planck equation.
	The first method relies on a moving least squares discretization of the S--formulation, in which moving grid points (particles) are advected by the drift
	associated with the target distribution,
	while diffusion is resolved on a dynamically evolving particle cloud. This setting naturally leads to strong compression and expansion of the particle cloud.
	Obstacles are handled by enforcing reflective boundary conditions through a novel ghost point method.
	The second method is a Monte Carlo solver for the associated Langevin stochastic differential equation.
	It relies on a local approximation of the logarithmic gradient of the evolving particle density to extract macroscopic osmotic paths from individual stochastic trajectories.
	Owing to its inherent parallelism, this method exhibits excellent scalability and is well suited for high--dimensional registration problems.
	We illustrate the main features and performance of both approaches through extensive numerical experiments.
\end{abstract}

\textbf{Acronyms:} PSR (point set registration), MOR (model order reduction), MLS (moving least squares), FPE (Fokker-Planck equation)

\section{Introduction}
\label{section:intro}

\subsection{Point set registration in bounded domains}
The problem of finding coordinate transformations to align two or more point clouds is of broad interest in science and engineering.
In model order reduction (MOR), coordinate transformations are employed 
to align point clouds that are associated with 
coherent features of the solution field (e.g., shocks)  \cite{iollo2014advection,iollo2022mapping,razavi2025registration,taddei2020registration},
to enhance data compression.
In image processing 
\cite{balakrishnan2019voxelmorph,beg2005computing,beier2023feature},
registration techniques are employed to determine realistic interpolations of two objects,
estimate displacements between frames and combine information from different images.
In many applications, coordinate transformations must satisfy specific geometric constraints to prevent the displacement of point clouds into invalid regions; mathematically, this often requires the mapping to be a bijection from the domain of interest onto itself.

Given the two- or three-dimensional domain $\Omega \subset \mathbb{R}^d$, and the reference and the target point clouds 
$\{X_i^0\}_{i=1}^N$, 
$\{ X_j^\infty\}_{j=1}^M \subset \Omega$ and
$T_{\rm max} \in \mathbb{R}_+ \cup \{ +\infty \}$, 
we seek a flow of diffeomorphisms  $\Phi:\Omega \times [0,T_{\rm max}) \to \Omega$  such that
(i) $\Phi(\cdot, t)$ is 
equal to the identity for $t=0$ and is continuous  with respect to $t$; 
(ii)   $\Phi(\cdot, t)$ is a diffeomorphism of the domain that preserves its boundary for all $t$;
(iii)  the mapped point cloud $\{ \Phi(X_i^0,t)  \}_{i=1}^N$ is close --- in the Hausdorff distance --- to the target point cloud as $t\to T_{\rm max}$.
Note that, since the point correspondences are unknown, directly estimating both the correspondence and the transformation generally leads to a challenging combinatorial optimization problem. Several methods for point set registration (PSR) recast the deterministic problem in a probabilistic setting to avoid the need for combinatorial optimization \cite{zhu2019review}.
In more detail, 
we assume  that 
$\mathcal{X}^0 = \{X_i^0 \}_{i=1}^N$ and
$\mathcal{X}^\infty = \{X_j^\infty \}_{j=1}^M$ are independent and identically distributed (iid) samples from two probability distributions with densities $\rho_0$ and $\rho_\infty$.
Both densities are nonnegative and normalized to have unit mass.
Next, we seek a transport map $\Phi$ that interpolates the two densities, that is,
the pushforward measure $\Phi(\cdot, t)_\# \rho_0$
converges to $\rho_\infty$ as $t\to T_{\rm max}$.
Expectation-maximization procedures have been proposed to directly estimate the transformation that registers the two point clouds \cite{myronenko2010point};
see also \cite{labatut2026parametric} for an application to MOR in bounded domains.
Alternatively,  in \cite{iollo2025point}, we resorted to the Fokker-Planck equation (FPE)  to determine the transport map. The FPE  describes the evolution of a probability density $\rho(\cdot, t) = \Phi(\cdot, t)_\# \rho_0$ and 
reads as a linear parabolic equation with Neumann boundary conditions on $\partial \Omega$, constant diffusion and time-independent advection.
As discussed in Section \ref{section:formulation}, we can prove that 
$\rho(\cdot, t) $ converges to $\rho_\infty$ as $t\to T_{\rm max}=+\infty$ under mild assumptions on the reference and the target distributions.

In this work, we build upon  \cite{iollo2025point} to devise a general PSR method in bounded domains based on the FPE. The application of the FPE to PSR requires addressing three major tasks. \begin{enumerate}
	\item
	\emph{Density estimation:} given the point clouds 
	$\mathcal{X}^0 = \{X_i^0 \}_{i=1}^N$ and
	$\mathcal{X}^\infty = \{X_j^\infty \}_{j=1}^M$,  estimate the probability densities $\rho_0$ and $\rho_\infty$.
	\item
	\emph{Numerical approximation of the Fokker-Planck equation:} determine an estimate of the solution to the FPE.
	\item
	\emph{Transport of particles:}
	estimate the flow $\Phi(\cdot ,t)$ based on the estimate of the density and use it to  determine the particles' trajectories.
\end{enumerate}
In this work, we focus on the latter two tasks:
first, we propose a moving least squares (MLS) method for the FPE;
second, we present a Monte Carlo (MC) method that relies on the stochastic formulation of the FPE; finally, we discuss the sensitivity of the procedure with respect to the choice of the density.

\subsection{Contributions and relation to previous work}

The present work builds on the use of the Fokker-Planck equation as a tool for point set registration in bounded domains. As shown in~\cite{iollo2025point}, the FPE provides a natural framework for mapping probability distributions. This is particularly advantageous in bounded domains or in the presence of obstacles, where alternative approaches based on optimal transport require the solution of highly nonlinear evolution equations and the explicit treatment of constraints, which rapidly becomes prohibitive. In contrast, the FPE is a linear parabolic equation with time--independent coefficients, for which reflective boundaries are straightforward to impose. In this work, we further argue that particle methods are especially well suited for the numerical approximation of the FPE in the context of point set registration. Since the goal is to map distributions with typically local support, particle formulations concentrate the computational effort where the density is non--negligible, whereas grid--based methods must discretize the full domain. Moreover, particle methods can directly provide the trajectories of the induced transport map, which are essential for applications such as registration--based model order reduction \cite{cucchiara_model_2024}. Motivated by these observations, we develop two complementary particle methods for this purpose.

The first contribution of this paper is a deterministic particle method for the FPE based on a moving least squares (MLS) discretization \cite{lancaster_surfaces_1981, levin_approximation_1998}. Meshfree MLS techniques have been widely used for the numerical solution of partial differential equations, notably within generalized finite difference \cite{clain_stencil_2024, kuhnert_grid_2001} and moving particle semi--implicit (MPS) methods \cite{tamai_least_2014, wang_compact_2023}, due to their flexibility in handling complex geometries without mesh generation. While such methods have been successfully applied to incompressible flow problems \cite{koshizuka_numerical_1998, jandaghian_enhanced_2020}, their extension to diffusion--dominated kinetic equations such as the FPE poses additional challenges. In this work, we formulate an MLS discretization of the FPE in its S--formulation and adopt a Lagrangian viewpoint in which particles are advected by the drift associated with the target distribution, while diffusion is resolved on a dynamically evolving particle cloud. This setting gives rise to strong compression and expansion of the particle distribution and requires a careful treatment of boundary conditions, which we enforce through a novel ghost point method that consistently imposes no--flux constraints at the boundary. To the best of our knowledge, this is the first least--squares particle method specifically adapted to the FPE and employed for point set registration.

The second particle approach considered in this paper is a stochastic method that relies on the Langevin stochastic differential equation associated with the FPE
\cite{karatzas_brownian_1998}.
Monte Carlo simulation of stochastic dynamics provides a classical alternative to deterministic PDE solvers, with well--established theoretical foundations and
excellent scalability \cite{caflisch_monte_1998}.
In the present context, the stochastic formulation offers two key advantages.
First, it bypasses the explicit numerical solution of the FPE by simulating independent reflected Langevin trajectories, which naturally encode the desired boundary behavior.
Second, the method only requires explicit knowledge of the score of the target density, that is, its logarithmic gradient, while the source distribution is represented
implicitly by the initial particle ensemble.
We show how to recover deterministic osmotic paths from the stochastic trajectories, thus reconstructing the transport map required for point set registration.
To account for spatial and temporal variations in the particle density, we introduce an adaptive-radius procedure that adjusts the sampling regions used to estimate these paths.
This approach connects the present work to recent developments in score--based learning \cite{song_score-based_2020}.

The remainder of the paper is organized as follows. In Section~\ref{section:formulation}, we recall the Fokker-Planck formulation for point set registration and introduce its deterministic and stochastic representations. Section~\ref{section:MLS} presents the MLS particle method, including the spatial discretization, time integration, and boundary treatment. In Section~\ref{section:MC}, we describe the Monte Carlo method based on reflected Langevin dynamics and detail the extraction of osmotic paths from the stochastic simulations. Section~\ref{section:numerics} contains numerical experiments that assess the accuracy and robustness of both methods on a range of test cases, including registration problems with obstacles. Finally, Section~\ref{section:conclusion} summarizes the main results and outlines directions for future research.

\section{Formulation}
\label{section:formulation}
In this section, we recall the formulation of point set registration based on the
FPE and introduce the deterministic and stochastic viewpoints that
underpin the proposed methods. We first describe the deterministic Fokker-Planck
formulation and the associated transport map, and then review its stochastic counterpart
based on Langevin dynamics, which forms the basis of the Monte Carlo approach.

\subsection{Fokker-Planck equation for point set registration}
\label{sub:FPE}
As discussed in the introduction, given the probability densities $\rho_0$ and $\rho_\infty$ and the particles
$\mathcal{X}^0 = \{X_i^0 \}_{i=1}^N$ and
$\mathcal{X}^\infty = \{X_j^\infty \}_{j=1}^M$ such that
\begin{equation}
	\label{eq:iid_samples}
	X_i^0 \overset{\rm iid}{\sim} \rho_0, \quad
	X_j^\infty \overset{\rm iid}{\sim} \rho_\infty,
	\;\;
	i=1,\ldots,N; \quad
	j=1,\ldots,M,
\end{equation}
we seek a transport map $\Phi: \Omega \times \mathbb{R}_+ \to \Omega$ such that the density associated with the pushforward measure $\Phi(\cdot, t)_\# \rho_0$
\[
\rho(x,t) = \rho_0\left(\Phi^{-1}(x,t)\right)\left|\det\nabla\Phi^{-1}(x,t)\right|
\]
converges to $\rho_\infty$ as $t\to \infty$.
Below, we illustrate how the FPE can be used to achieve this goal.

We introduce the solution $\rho:\Omega \times \mathbb{R}_+ \to \mathbb{R}_+$ to the FPE,
\begin{equation}
	\label{eq:FPequation}
	\left\{
	\begin{array}{ll}
		\displaystyle{
			\partial_t \rho + \nabla \cdot \left(  \rho \nabla S_\infty - \nabla \rho 
			\right) = 0
		}     &  \displaystyle{{\rm in} \; \Omega \times \mathbb{R}_+}\\[3mm]
		\rho(\cdot,0) = \rho_0     & 
		{\rm in} \; \Omega  \\[3mm]
		\left( \rho \nabla S_\infty  -  \nabla \rho 
		\right) \cdot \mathbf{n} = 0     & 
		{\rm on} \; \partial \Omega \times \mathbb{R}_+  \\
	\end{array}
	\right.
\end{equation}
where 
$S_\infty = \log \rho_\infty$, with $\nabla S_\infty = \frac{\nabla \rho_\infty}{\rho_\infty}$ denoting the score of \(\rho_\infty\).
The score represents the spatial rate of change of the density relative to its local value.
The unit normal $\mathbf{n}$ to \(\partial \Omega\) points outward from the domain $\Omega$. Finally, we define
the velocity
\begin{equation}
	\label{eq:velocityFP}
	u(x,t) =  \nabla \left( S_\infty(x) - \log \rho (x,t) \right),
	\quad
	x \in \Omega, \;\; t\in \mathbb{R}_+;
\end{equation}
and the associated flow map
\begin{equation}
	\label{eq:FPflow}
	\Phi(\xi,t) = X(\xi,t) \quad
	{\rm where} \;\;
	\left\{
	\begin{array}{ll}
		\displaystyle{
			\partial_t  X(\xi,\tau) = u   
			\left(  
			X(\xi,\tau) ,\tau
			\right) 
		}     &  
		\tau \in (0,t],
		\\[3mm]
		X(\xi,0) = \xi. &   \\
	\end{array}
	\right.
\end{equation}
The next theorem provides a rigorous justification of the previous definitions. We refer to \cite{iollo2025point} and to the references therein for the proofs.

\begin{theorem}
	\label{th:FPE_analysis}
	Let $\Omega \subset \mathbb{R}^d$ be a Lipschitz domain; let $\rho_0,\rho_\infty \in \mathcal{P}_2(\Omega)$ satisfy
	\begin{equation}
		\label{eq:hypotheses_FPE}
		\inf_{x\in \Omega} \rho_\infty(x) > 0,
		\quad
		{\rm and} \quad
		\nabla \rho_\infty \in {\rm Lip}(\Omega, \mathbb{R}^d).
	\end{equation}
	Then, the following hold.
	\begin{enumerate}
		\item
		There exists a unique weak solution $\rho \in C(\mathbb{R}_+; \mathcal{P}_2(\Omega))$ to \eqref{eq:FPequation}.
		\item
		The solution to \eqref{eq:FPequation} converges to $\rho_\infty$ in $L^1(\Omega)$; that is,
		$\lim_{t\to \infty} \| \rho(\cdot, t) - \rho_\infty  \|_{L^1(\Omega)} = 0$.
		\item
		If   $X_1^0,\ldots,X_N^0 \overset{\rm iid}{\sim} \rho_0$, and $\Phi$ is defined as in \eqref{eq:FPflow}, we have that
		$\Phi(X_i^0,t) \overset{\rm iid}{\sim} \rho(\cdot, t)$ for all $t>0$.
	\end{enumerate}
\end{theorem}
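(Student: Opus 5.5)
Since $\Omega$ is bounded and the drift $b=\nabla S_\infty=\nabla\rho_\infty/\rho_\infty$ is bounded and Lipschitz (by \eqref{eq:hypotheses_FPE}, $\rho_\infty$ is bounded below and $\nabla\rho_\infty$ is Lipschitz, hence bounded, on $\Omega$), the plan is to treat \eqref{eq:FPequation} as a linear uniformly parabolic problem with a conormal (no-flux) boundary condition.

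\textbf{Step 1: well-posedness.} I will write the weak formulation in $L^2(0,T;H^1(\Omega))\cap H^1(0,T;(H^1)')$:
\[
\langle\partial_t\rho,\varphi\rangle+\int_\Omega(\nabla\rho-\rho b)\cdot\nabla\varphi=0 \quad \text{for all } \varphi\in H^1(\Omega).
\]
The bilinear form satisfies a G\aa rding inequality because $b\in L^\infty$, so J.-L. Lions' theorem gives existence and uniqueness for $\rho_0\in L^2$. For general $\rho_0\in\mathcal{P}_2(\Omega)$ I will approximate by mollified data and pass to the limit. This limit passage uses $L^1$-contraction, which follows from a Kato-type argument or duality with the adjoint equation $\partial_t\psi=\Delta\psi+b\cdot\nabla\psi$ under homogeneous Neumann conditions. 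Testing with $\varphi\equiv1$ gives mass conservation. Positivity follows from the weak maximum principle, obtained by testing with $\rho^-$. Continuity in time with values in $\mathcal{P}_2$ is automatic on a bounded domain, since $W_2$ is then controlled by weak convergence.

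\textbf{Step 2: convergence.} Setting $h=\rho/\rho_\infty$, the equation becomes
\[
\partial_t\rho=\nabla\cdot\bigl(\rho_\infty\nabla h\bigr)
\]
with $\rho_\infty\nabla h\cdot\mathbf n=0$ on the boundary. For $\rho_0\in L^2$, the relative $\chi^2$ energy $E(t)=\int(h-1)^2\rho_\infty$ satisfies
\[
E'=-2\int\rho_\infty|\nabla h|^2.
\]
A weighted Poincar\'e inequality on the Lipschitz domain, with weight $\rho_\infty$ bounded above and below, gives $E'\le-2\lambda E$. Hence $E$ decays exponentially, and Cauchy--Schwarz yields $\|\rho-\rho_\infty\|_{L^1}\le E^{1/2}\to0$. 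For general data I will use the $L^1$-contraction from Step 1 together with density of $L^2$ data in $L^1$, using that $\rho_\infty$ is stationary. A $3\varepsilon$ argument then gives $L^1$ convergence.

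\textbf{Step 3: pushforward property.} By parabolic smoothing, $\rho(\cdot,t)>0$ and is smooth for $t>0$, so the velocity $u=b-\nabla\log\rho$ is locally Lipschitz in space for $t>0$. The no-flux condition gives $u\cdot\mathbf n=0$ on $\partial\Omega$, so the flow \eqref{eq:FPflow} keeps trajectories inside $\Omega$. Since \eqref{eq:FPequation} reads
\[
\partial_t\rho+\nabla\cdot(\rho u)=0,
\]
the solution is a solution of the continuity equation with field $u$. Uniqueness for the continuity equation with Lipschitz fields (the DiPerna--Lions/Ambrosio superposition principle) then identifies $\rho(\cdot,t)=\Phi(\cdot,t)_\#\rho_0$. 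Independence of the samples $\Phi(X_i^0,t)$ follows because $\Phi(\cdot,t)$ is a deterministic map applied to iid inputs.

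\textbf{Main obstacle.} I expect the hardest point to be the singularity at $t=0$ and near the boundary. The field $u$ may blow up as $t\to0$ if $\rho_0$ vanishes or is rough, and up-to-boundary regularity on a merely Lipschitz domain is delicate. My first attempt would be an integrability bound on $\int_0^t\|\nabla u\|_\infty$, and failing that, to invoke Ambrosio's theory using only the finite Fisher information bound
\[
\int_0^t\!\!\int_\Omega\rho|u|^2<\infty,
\]
which comes from the entropy dissipation identity. This bound yields a regular Lagrangian flow, which suffices for the pushforward statement.
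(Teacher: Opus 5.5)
Your Steps 1 and 2 are sound for $L^2$ initial densities, and for $L^1$ densities via $L^1$-contraction. The G{\aa}rding/Lions argument is correct, as is nonnegativity by testing with $\rho^-$ and the $\chi^2$ decay $E'=-2\int_\Omega\rho_\infty|\nabla h|^2\le-2\lambda E$ from a weighted Poincar\'e inequality. The paper does not prove the theorem itself. It defers to \cite{iollo2025point}, and right after the statement it says it relies on the hypothesis that the velocity \eqref{eq:velocityFP} is Lipschitz continuous. Two smaller points in your Step 1. First, Lions' theorem gives uniqueness only in $L^2(0,T;H^1)\cap H^1(0,T;(H^1)')$, so uniqueness among all weak solutions in $C(\mathbb{R}_+;\mathcal{P}_2(\Omega))$ still needs an argument. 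Duality with the backward equation would do it, but then its solutions must be admissible test functions up to a merely Lipschitz boundary. Second, $L^1$-contraction does not help when $\rho_0$ is a genuine measure; there you need the adjoint semigroup or parabolic smoothing, and this also affects your density argument in Step 2.

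The genuine gap is Step 3. Your interior claim is fine: argue it through $h$, which solves $\partial_t h=\Delta h+b\cdot\nabla h$, so $u=-\nabla\log h$ is locally $C^{1,\alpha}$ for $t>0$ (whereas $\rho$ itself is not smooth, since $b$ is only Lipschitz). The two issues that matter are left open.
\begin{itemize}
\item On a Lipschitz domain nothing controls $u$ up to $\partial\Omega$; Neumann solutions have gradient singularities at reentrant corners. So $u\cdot\mathbf{n}=0$ a.e.\ does not by itself give invariance of $\Omega$.
\item Since $u$ need not be uniformly Lipschitz as $t\to0$, uniqueness for the continuity equation with Lipschitz fields does not apply on $[0,t]$.
\end{itemize}

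Your fallback does not repair this. The bound $\int_0^t\!\int_\Omega\rho|u|^2<\infty$ requires $\rho_0$ to have finite relative entropy. Even then, Ambrosio's superposition principle gives only a probability measure on integral curves of $u$ with marginals $\rho(\cdot,s)$, not a map. To conclude that this measure is induced by a flow, you need a uniqueness result for the continuity equation, or $\rho_0$-a.e.\ uniqueness of integral curves. The DiPerna--Lions--Ambrosio theory obtains that from Sobolev/BV regularity of $u$ together with bounds on $\nabla\cdot u=\nabla\cdot b-\Delta\log\rho$. None of this follows from a Fisher-information bound.

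Some hypothesis beyond those stated is in fact unavoidable. For $\rho_0=\delta_{x_0}\in\mathcal{P}_2(\Omega)$, no map can satisfy item 3: $\Phi(\cdot,t)_\#\rho_0=\delta_{\Phi(x_0,t)}$, whereas $\rho(\cdot,t)$ is absolutely continuous for $t>0$. What is missing is the regularity assumption the paper adds: $u$ Lipschitz and tangent to $\partial\Omega$ on $\overline{\Omega}\times[0,t]$. Alternatively, you could impose conditions on $\rho_0$ and $\partial\Omega$ that guarantee $\int_0^t\operatorname{Lip}(u(\cdot,s))\,ds<\infty$ up to the boundary. Under either, item 3 reduces to the classical Lagrangian representation of $\partial_t\rho+\nabla\cdot(\rho u)=0$. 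Your remark that a deterministic map preserves independence then completes it.
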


Theorem \ref{th:FPE_analysis} provides the foundation for the use of the FPE for PSR; we can distinguish between three tasks.
\begin{enumerate}
	\item
	\emph{Density estimation:} given the point clouds 
	$\mathcal{X}^0 = \{X_i^0 \}_{i=1}^N$ and
	$\mathcal{X}^\infty = \{X_j^\infty \}_{j=1}^M$, estimate the densities $\rho_0$ and $\rho_\infty$.
	\item
	\emph{Numerical approximation of the Fokker-Planck equation:} determine an estimate of the solution to \eqref{eq:FPequation}.
	\item
	\emph{Transport of particles:} integrate the equations \eqref{eq:FPflow} to determine the particles' trajectories.
\end{enumerate}
We note that the algorithm directly exploits the density $\rho_0$ as it provides the initial condition for the FPE; 
on the other hand, we only need the score of the target density, $\nabla \log \rho_\infty $. The density estimation procedure should ensure the satisfaction of the hypotheses of Theorem \ref{th:FPE_analysis} (cf. \eqref{eq:hypotheses_FPE}). As shown in Section \ref{section:PSRAcrossCylinder}, the choice of the density has a direct impact on the stiffness of the PDE \eqref{eq:FPequation} and ultimately on the difficulty of its numerical approximation.

Following \cite{iollo2025point}, we investigate two different methods for the approximation of the particles' trajectories. We note that we rely on the hypothesis that the velocity \eqref{eq:velocityFP} is Lipschitz continuous.
As discussed in \cite{ambrosio2017lecture}, it is possible to rigorously extend the definition of the transport map $\Phi$ to abstract spaces. We also note that the particle methods
discussed in  Section \ref{section:MLSSection} couple the numerical approximation of \eqref{eq:FPequation} with the transport of particles \eqref{eq:FPflow}. 

\subsection{S-formulation of the Fokker-Planck equation}
\label{sub:S-form}
In Section \ref{sub:FPE} we introduced the FPE for the density $\rho$ and the transport velocity $u$. 
With this notation, the first equation in \eqref{eq:FPequation}
can be written as 
\[
\pad{\rho}{t} + \nabla\cdot(\rho u) = \pad{\rho}{t} + u\cdot \nabla \rho + \rho\nabla\cdot u = 0. 
\]
Dividing by $\rho$\footnote{Here we assume $\rho>0$ in $\Omega$ for all time. As a consequence of the discrete maximum principle, if $\rho_0>0$, then $\rho(x,t)>0 \; \forall t\geq 0$. } and introducing the variable $S(x,t)$ as $S = \log \rho$, we obtain the equation
\[
\pad{S}{t} + u\cdot\nabla S + \nabla\cdot u = 0.
\]
By exploiting the definition of $u$ in \eqref{eq:velocityFP}, we obtain a transport equation for the new variable $S$, which can be rewritten in the following mixed Lagrangian-Eulerian formulation:
\begin{equation}
	\label{eq:S_Eulerian}
	\pad{S}{t} + \nabla(S_\infty-S)\cdot\nabla S = \Delta S - \Delta S_\infty
\end{equation}
This equation takes the form of a viscous Hamilton-Jacobi equation, which has been widely studied in other contexts, such as optimal control. In the absence of the $S_\infty$ term, the equation has been analyzed in \cite{BidautVeron2015HJ}. In addition, it can be recast in a Lagrangian-Eulerian formulation as follows:
\begin{equation}
	\label{eq:S_Lagrangian}
	\td{S}{t}  =  \Delta S - \Delta S_\infty, \quad \td{x}{t} =  \nabla(S_\infty-S),
\end{equation}
where the total derivative is taken along the flow induced by the 
velocity $u$.  This formulation is particularly suitable in the context of particle methods: each particle $i$ carries information about its position, $x_i(t)$, and the value of the unknown, $S_i(t) \approx S(x_i(t), t)$. We expect that the stiffness in system \eqref{eq:S_Lagrangian} arises from the diffusion term $\Delta S$, which is linear in this formulation and therefore more amenable to an implicit treatment
(see Section \ref{section:MLSSection}).
The boundary conditions in \eqref{eq:FPequation} take the more standard Neumann form
\begin{equation}\label{eq:SBC}
	\pad{S}{n} = \pad{S_\infty}{n} \quad \textrm{on}\> \partial\Omega\times\mathbb{R}_+, 
\end{equation}
which can be easily implemented in a particle scheme by the use of ghost particles mirrored across the boundary (see Section 
\ref{section:MLSBCs}).

\begin{remark}
	In the absence of the driving term (that is, when \(S_\infty\) is constant in \(\Omega\)), Eq.~\eqref{eq:FPequation} reduces to the linear heat equation for the density $\rho$.
	The velocity $u$ then becomes the osmotic velocity associated with the diffusion process
	\[
	u = -\nabla S = -\frac{\nabla\rho}{\rho}.
	\]
	The flow map induced by such osmotic velocity transforms the initial density $\rho(x,0)$ into the density $\rho(x,t)$, which solves the heat equation.
	This observation is at the core of a particle method for the heat equation based on the direct reconstruction of the osmotic velocity from the particle distribution
	\cite{russo1990deterministic}.
	If, on the other hand, one includes the potential $S_\infty(x)$, then the flow map asymptotically transforms the initial density $\rho(x,0)$ into the target density $\rho_\infty$.
	This is consistent with the convergence result stated in Theorem \ref{th:FPE_analysis}.
	The connection between particle methods based on osmotic velocity and the diffusion equation has been studied in
	\cite{russo1990particle} and \cite{russo1990deterministic}.
\end{remark}

\subsection{Stochastic formulation of the Fokker-Planck equation}
We recall here the stochastic counterpart of the Fokker-Planck formulation
introduced in Section~\ref{section:formulation}. This derivation provides both
a probabilistic interpretation of \eqref{eq:FPequation} and the basis for the
Monte Carlo method described in Section~\ref{section:langevin}.

Consider the stochastic differential equation (SDE)
\begin{equation}
	\label{eq:langevin}
	dX_t = \nabla S_\infty(X_t)\,dt + \sqrt{2}\, dW_t ,
\end{equation}
where $W_t$ is a standard $d$-dimensional Wiener process and
$S_\infty = \log \rho_\infty$. Equation \eqref{eq:langevin} is the first-order Langevin (Smoluchowski)
dynamics associated with the potential $-S_\infty$. The SDE is related to the Fokker-Planck equation in the following way. Let $\varphi \in C_c^\infty(\Omega)$.

Applying Itô's formula to $\varphi(X_t)$ \cite[Chapter~4]{oksendal_sde} gives
\[
d\varphi(X_t)
=
\nabla \varphi(X_t)\cdot \nabla S_\infty(X_t)\,dt
+ \Delta \varphi(X_t)\,dt
+ \sqrt{2}\,\nabla\varphi(X_t)\cdot dW_t.
\]
Taking expectations and noting that the stochastic integral has zero mean, we obtain
\[
\frac{d}{dt}\mathbb{E}[\varphi(X_t)]
=
\mathbb{E}\big[
\nabla \varphi(X_t)\cdot \nabla S_\infty(X_t)
+ \Delta \varphi(X_t)
\big].
\]
If we denote by $\rho(\cdot,t)$ the density of $X_t$, the preceding quantity can be written as
\[
\frac{d}{dt}
\int_\Omega \varphi(x)\rho(x,t)\,dx
=
\int_\Omega
\Big(
\nabla \varphi \cdot \nabla S_\infty
+ \Delta \varphi
\Big)
\rho\,dx.
\]
Since \(\varphi\) and all its derivatives vanish at the boundary, we can apply integration by parts to obtain an evolutionary equation for \(\rho\)
\[
\partial_t \rho
=
- \nabla \cdot \big(\rho \nabla S_\infty\big)
+ \Delta \rho ,
\]
which coincides with \eqref{eq:FPequation}. We refer for instance to \cite[Sec.~3.4]{pavliotis_stochastic} for a detailed derivation of the FPE from the corresponding SDE.

Since $\Omega$ is bounded, the stochastic process $X_t$ must remain in the closure $\overline{\Omega} = \Omega \cup \partial \Omega$ almost surely for all $t \ge 0$. The probabilistic counterpart of the no-flux (Neumann) boundary condition in the FPE \eqref{eq:FPequation} is the reflected SDE
\begin{equation}
	\label{eq:reflected_langevin}
	dX_t = \nabla S_\infty(X_t)\, dt + \sqrt{2}\, dW_t + n(X_t)\, dL_t,
\end{equation}
where $n(x)$ is the inward unit normal on $\partial\Omega$ and $L = (L_t)_{t\ge 0}$ denotes the boundary local time.
This non-decreasing process remains constant while $X_t$ lies in the interior of the domain and increases only when $X_t$ reaches the boundary; equivalently,
\[
L_t = \int_0^t \mathbf{1}_{\{X_s \in \partial\Omega\}}\, dL_s.
\]
Consequently, the term $n(X_t)\,dL_t$ acts only at the boundary and provides the minimal inward displacement required to prevent the stochastic trajectory from leaving
the domain. Existence, pathwise uniqueness, and the precise link to the Neumann boundary condition for the associated FPE were established by Lions and Sznitman
\cite{lions_sznitman_reflected} in smooth or Lipschitz domains.

The stochastic formulation can replace the deterministic solution of the FPE by the simulation of independent reflected Langevin trajectories. The stochastic formulation has the following requirements. 
\begin{itemize}
	\item The target density enters the Langevin drift only through its score, $\nabla S_\infty = \nabla \log \rho_\infty$. Consequently, the target density only needs to be known up to a positive multiplicative constant. 
	\item The initial distribution $\rho_0$ does not need to be known
	analytically. It is sufficient to initialize the particles with 
	samples $X_1^0,\dots,X_N^0$ satisfying
	\eqref{eq:iid_samples}. They implicitly define the  empirical measure
	\[
	\mu_0^N = \frac1N \sum_{i=1}^N \delta_{X_i^0}
	\]
	that acts as a discrete approximation of $\rho_0$.
	
\end{itemize}

\section{Meshfree method for the Fokker-Planck equation}
\label{section:MLSSection}
In this section, we introduce the meshfree method based on the S-formulation of the FPE introduced in Section \ref{sub:S-form}:
\begin{align}
	\frac{dS}{dt} & =\Delta S - \Delta S_\infty, \label{eq:population} \\
	\frac{dx}{dt} & = -\nabla S +\nabla S_\infty, \label{eq:characteristic}
\end{align} with \(S_\infty = \log \rho_{\infty}\) and initial condition \( \rho(\mathbf{x}, 0) = \rho_0(\mathbf{x})\), and boundary conditions \eqref{eq:SBC}. In the MLS method, we consider a set of grid points \(\mathbf{x}_i\), initially distributed according to \(\rho_0(\mathbf{x})\). The grid points are moved according to the characteristic equation \eqref{eq:characteristic}. On the moving grid, we solve equation \eqref{eq:population}. By solving the equation with a set of moving grid points or particles, all computational resources are used to advance the solution where the distribution \(\rho(\mathbf{x}, t)\) is non-zero. There is no need to discretize the equations on some truncated subdomain of \(\mathbb{R}^2\) using a fine mesh.

For the application of this method to point set registration, we require several ingredients. The characteristic equation \eqref{eq:characteristic} and population equation \eqref{eq:population} require the gradient and Laplacian of \(S_\infty = \log \rho_\infty\). These quantities are either known analytically, or it can be approximated, see Appendix \ref{appendix:DensityEstimation} on density estimation. The initial distribution of particles is sampled from \(\rho_0\), or analogously \(S_0\). It is therefore sufficient to know the distribution \(\rho_0\) up to a multiplicative constant, or analogously \(S_0\) up to an additive constant. 

In Section \ref{section:MLS}, we illustrate the MLS method used to discretize the right-hand sides of Equations \eqref{eq:population} and \eqref{eq:characteristic}. Then, in Section \ref{section:MLSTimeDiscretization}, we discuss the time discretization method. Finally, in Section \ref{section:MLSBCs}, we discuss boundary conditions. 

\subsection{Moving Least Squares}
\label{section:MLS}
\begin{figure}	
	\centering
	\hspace*{\fill}  
	\begin{tikzpicture}[scale=0.8]
		
		\pgfmathsetseed{2023}
		
		\def\gridSize{8}
		\def\spacing{1}
		
		\foreach \i in {0,...,5} {
			\foreach \j in {0,...,5} {
				\pgfmathsetmacro{\dx}{rnd}
				\pgfmathsetmacro{\dy}{rnd}
				\fill[blue] ({(\i+\dx*0.95)*\spacing}, {(\j+\dy*0.95)*\spacing}) circle (2pt);
			}
		}
		
		\pgfmathsetmacro{\x}{2.7}
		\pgfmathsetmacro{\y}{2.7}
		\def\r{2.05}
		\def\angle{-15}
		
		\fill[red] (\x, \y) circle (2pt);
		\node[black, above left] at (\x, \y) {$i$};
		\draw (\x, \y) circle (\r);
		\node[black] at ({\x + \r*cos(45) + 0.2}, {\y + \r*sin(45) + 0.2}) {$\mathcal{C}_i$};

		\draw[<->, thick] (\x,\y) -- ++(\angle:\r);
		\node[anchor=east, rotate=\angle] at ($(\x,\y)!0.5!(\x,\y)+(\angle:\r) + (0, 0.3)$) {$r_{max, i}$};
		
	\end{tikzpicture}
	\hspace*{\fill}  
	\caption{An illustration of a grid of points \(\xb_j\), the set of neighbors \(\mathcal{C}_i\) and the distance \(r_{max, i}\).}
	\label{fig:particleManagement}
\end{figure}
In this section, we summarize a two-dimensional Moving Least Squares (MLS) method. Consider a set of \(N\) points \(\xb_i = [x_i, y_i]^\top, \, i = 1 \dots N\) initially distributed according to \(\rho_0(x)\). We associate to each point \(\xb_i\) a local central neighborhood \(\mathcal{C}_i\), as illustrated in Figure \ref{fig:particleManagement}. The set \(\mathcal{C}_i\) contains the closest \(N_b\) neighbors to point \(\xb_i\), i.e., \begin{align}
	\mathcal{C}_i = \{\, \xb_j \mid j \neq i \text{ and } d(\xb_i, \xb_j) \text{ is among the } N_b \text{ smallest} \,\}
\end{align} where \(d(\xb_i, \xb_j)\) denotes the Euclidean distance, \(d(\xb_i, \xb_j) = \norm{\xb_i - \xb_j}_2\). These neighbors can be computed efficiently using a stationary background grid, see \cite[Sec.~3.2]{seibold_m-matrices_2006}. We define \(r_{{\rm max}, i}\) as the distance to the farthest point in \(\mathcal{C}_i\), 
\begin{align}
	r_{{\rm max}, i} = \max_{j \in \mathcal{C}_i}\,  d(\xb_i, \xb_j). 
\end{align} MLS approximates \(S\) by performing a Taylor expansion around the point \(S(\mathbf{x}_i) = S_i\), temporarily neglecting the time dependence of \(S\). This yields \begin{align}
	S_j = S_i + D_i^\top X_{ij} + \mathcal{O}(\Delta x_{ij}^3) + \mathcal{O}(\Delta y_{ij}^3), \quad j \in \mathcal{C}_i, \label{eq:MLSTaylor}
\end{align} with 
\begin{align}
	D_i = \left[\pdv{S_i}{x_i}, \pdv{S_i}{y_i}, \pdv[2]{S_i}{x_i}, \pdv[2]{S_i}{y_i}, \pdv[2]{S_i}{x_i}{y_i}\right]^\top, \quad 
	X_{ij} = \left[\Delta x_{ij}, \Delta y_{ij}, \frac{1}{2}\Delta x_{ij}^2, \frac{1}{2}\Delta y_{ij}^2, \Delta x_{ij}\Delta y_{ij} \right]^\top,
\end{align} where \(\Delta x_{ij} = x_j - x_i\) and \(\Delta y_{ij} = y_j - y_i\). Equation \eqref{eq:MLSTaylor} defines a polynomial in \(\Delta x_{ij}\) and \(\Delta y_{ij}\) with coefficients \(D_i\) that approximates the data near \(\xb_i\). Spatial derivatives at \(\xb_i\) are then approximated by minimizing the \(L^2\) norm of the interpolation error in \(\mathcal{C}_i\) with respect to some weight function 
\begin{align}
	\tilde{D}_i = \argmin_{D_i} \sum_{j \in \mathcal{C}_i} w_{ij} \left(S_j - S_i - D_i^\top X_{ij} \right)^2. \label{eq:LS}
\end{align} We use a Gaussian weight function 
\begin{align}
	w_{ij} = w(\xb_i, \xb_j) = \exp \left(-6 \frac{\norm{\xb_i - \xb_j}^2}{r_{{\rm max}, i}^2} \right), 
\end{align} 
although many other choices exist \cite{seibold_m-matrices_2006}. The least-squares problem can be solved by standard techniques such as an SVD or QR decomposition.
Rescaling of the vector \(X_{ij}\) is performed to avoid problems due to ill-conditioning. Solving \eqref{eq:LS} simultaneously provides approximations of all five
derivatives contained in \(\tilde{D}_i\). Each approximation can be written as a finite-difference formula:
\begin{align}
	\widetilde{\partial_x S_i}
	&= \sum_{j \in \mathcal{C}_i} \alpha_{ij}^{x}(S_j-S_i),
	&
	\widetilde{\partial_y S_i}
	&= \sum_{j \in \mathcal{C}_i} \alpha_{ij}^{y}(S_j-S_i),
	\\
	\widetilde{\partial_{xx} S_i}
	&= \sum_{j \in \mathcal{C}_i} \alpha_{ij}^{xx}(S_j-S_i),
	&
	\widetilde{\partial_{yy} S_i}
	&= \sum_{j \in \mathcal{C}_i} \alpha_{ij}^{yy}(S_j-S_i),
	\\
	\widetilde{\partial_{xy} S_i}
	&= \sum_{j \in \mathcal{C}_i} \alpha_{ij}^{xy}(S_j-S_i).
\end{align}
The coefficients are obtained from the corresponding rows of the weighted least-squares pseudoinverse. In particular, the Laplacian is approximated by
\begin{align}
	\widetilde{\Delta S_i}
	= \sum_{j \in \mathcal{C}_i}
	\left(\alpha_{ij}^{xx}+\alpha_{ij}^{yy}\right)(S_j-S_i).
\end{align}
Because we used a quadratic approximation in \eqref{eq:MLSTaylor}, first-order derivatives are approximated to second-order, and second-order derivatives achieve
first-order accuracy. The extension to higher orders or higher dimensions is straightforward.

We introduce the following vector/matrix notation. The vector \(\mathbf{p} \in \mathbb{R}^{2N}\) is a concatenation of the $x$ and $y$ coordinates of all points.
Similarly, we define the vector \(\mathbf{S} \in \mathbb{R}^N \) containing all the values \(S_i\), \begin{align*}
	p &= [x_1, \dots x_N, \, y_1 \dots y_N]^\top, \\
	S &= [S_1, \, S_2, \, \dots \, S_N]^\top. 
\end{align*} 
Then, we define the sparse matrices \(D_L, D_x, D_y \in \mathbb{R}^{N \times N}\) such that the matrix-vector products \(D_L S, D_x S\) and \(D_y S\) yield
discrete approximations of the the Laplacian and the first derivatives in \(x\) and \(y\), respectively. For example, the matrix \(D_x\) contains
the coefficients \(\alpha_{ij}^{x}\) on the off-diagonal and \(-\sum_{j \in \mathcal{C}_i} \alpha_{ij}^{x}\) on its diagonal. Similarly, the off-diagonal entries of
\(D_L\) are \(\alpha_{ij}^{xx}+\alpha_{ij}^{yy}\). The MLS approximations of the spatial derivatives are used to discretize the spatial variable in the S-formulation
of the Fokker-Planck equation.

\begin{remark}
	The MLS method presented here suffers from ill-conditioning for higher-order approximations due to the use of a shifted monomial basis. The Local Anisotropic Basis Function Method (LABFM) provides a promising alternative \cite{king_high_2020,king_high-order_2022}. It employs better-conditioned basis functions and does not rely on the solution to a least-squares method to determine the coefficients. In our numerical experiments, LABFM performed well for basic test cases such as those in Section \ref{section:DiffusionTestCase}, but exhibited boundary instabilities in the test cases from Sections \ref{section:gaussianAcrossCylinder} and \ref{section:PSRAcrossCylinder}. We therefore use the quadratic MLS discretization described above throughout this work. Developing a robust boundary treatment for LABFM is left for future investigation.
\end{remark}

\subsection{Time discretization}
\label{section:MLSTimeDiscretization}
\subsubsection{First-order scheme}
\label{section:FirstOrderScheme}
To avoid stiffness due to the diffusion operator, we discretize \eqref{eq:population} with an implicit Euler method.
The characteristic equation \eqref{eq:characteristic} is discretized using an explicit Euler method.
Although the spatial derivatives of \(S_{\infty}\) can be obtained analytically, they are also approximated using the discrete operators \(D_L, D_x\) and \(D_y\).
This yields a well-balanced scheme that preserves the equilibrium \(S = S_\infty\) at the discrete level. The resulting fully discrete scheme is
\begin{align}
	S^{n+1} &= S^n + \Delta t D_L^{n+1} \left( S^{n+1} - S_{\infty}^{n+1} \right), \label{eq:populationUpdate} \\
	p^{n+1} &= p^n - \Delta t \left[D_x^n, \, D_y^n\right]^\top \left( S^{n} - S_{\infty}^{n} \right), \label{eq:characteristicUpdate}
\end{align} 
where the superscript \(n\) denotes the numerical solution at time \(t^n = n\,\Delta t\), with \(n = 0,\ldots,N_t\).
First, the particle positions are updated using Equation \eqref{eq:characteristicUpdate}. The discrete spatial operators at the next time step, \(D_L^{n+1}, D_x^{n+1}\) and \(D_y^{n+1}\),
are then computed by solving the least-squares problems \eqref{eq:LS} at every point.
Finally, the log-density \(S^{n+1}\) is updated by solving the linear system \eqref{eq:populationUpdate}.
In Section \ref{section:scoreEvaluation}, we discuss how to evaluate \(S_\infty\) stably. This scheme is first-order in time.
Due to the underlying MLS approximation, Equation \eqref{eq:populationUpdate} is first-order in space, while Equation \eqref{eq:characteristicUpdate} is second-order in space.
To control the explicit integration of the characteristic equation, we approximate the Jacobian of its velocity at every node by the Hessian of \(\tilde S = S_\infty-S\),
\begin{align*}
H_i = \begin{pmatrix}
	\pdv[2]{\tilde{S}_i}{x_i} & \pdv[2]{\tilde{S}_i}{x_i}{y_i} \\[2mm]
	\pdv[2]{\tilde{S}_i}{x_i}{y_i} & \pdv[2]{\tilde{S}_i}{y_i}
\end{pmatrix}.
\end{align*}
Linearizing the characteristic equation around the current particle position gives \(\dot{\delta p}=H_i\delta p\).
For a negative eigenvalue \(\lambda\) of \(H_i\), the explicit Euler method is stable provided that \(\Delta t\leq 2/|\lambda|\).
Since the second derivatives are readily available from the MLS approximation, computing these eigenvalues is inexpensive.
We therefore use the most negative eigenvalue to define a conservative local stability restriction,
\begin{align*}
\Delta t = \begin{cases}
	\min \left(\Delta t_{\max}, -\frac{1}{10 \lambda_{\min}} \right) \quad \text{if } \lambda_{\min} < 0, \\
	\Delta t_{\max} \quad \text{else},
\end{cases}
\end{align*}
where
\begin{align*}
\lambda_{\min} = \min_i \lambda_{\min}(H_i),
\end{align*}
and \(\Delta t_{\max}\) denotes the user-defined maximum allowed time step.
The factor \(1/10\) provides a conservative safety margin relative to the explicit Euler stability bound.

\subsubsection{Second-order scheme}
\label{section:SecondOrderScheme}
The first-order scheme from Section \ref{section:FirstOrderScheme} can be extended to second-order using the Implicit-Explicit (IMEX) Runge-Kutta framework \cite{ascher_implicit-explicit_1995}. In this work, we use the ARS(1, 2, 2) scheme, of which the Butcher tableaux are given by 
\begin{align}
	\begin{array}{c|cc}
		0 & 0 & 0 \\
		\frac{1}{2} & \frac{1}{2} & 0  \\[1mm]
		\hline
		& 0 & 1
	\end{array}, 
	\quad
	\begin{array}{c|cc}
		0 & 0 & 0 \\
		\frac{1}{2} & 0 & \frac{1}{2} \\[1mm]
		\hline
		& 0 & 1
	\end{array}.
	\label{eq:ButcherTableaus}
\end{align} 
This scheme corresponds to applying the explicit midpoint rule to the characteristic equation \eqref{eq:characteristic} (left tableau) and the implicit midpoint rule to the population equation \eqref{eq:population}
(right tableau). In conclusion, the second-order method reads as \begin{align*}
	p^{n+1/2} &= p^n - \frac{\Delta t}{2} \left[D_x^n, \, D_y^n\right]^\top \left( S^{n} - S_{\infty}^{n} \right), \\
	S^{n+1/2} &= S^n + \frac{\Delta t}{2} D_L^{n+1/2} \left( S^{n+1/2} - S_{\infty}^{n+1/2} \right), \\
	p^{n+1} &= p^n - \Delta t \left[D_x^{n+1/2}, \, D_y^{n+1/2}\right]^\top \left( S^{n+1/2} - S_{\infty}^{n+1/2} \right), \\
	S^{n+1} &= S^n + \Delta t D_L^{n+1} \left( S^{n+1} - S_{\infty}^{n+1} \right).
\end{align*}

\subsection{Boundary conditions}
\label{section:MLSBCs}
By discretizing the distribution with a finite set of particles, we inevitably obtain particles on the boundary of the convex hull of all particles. We refer to these particles as edge particles. Edge particles do not require any additional treatment compared to particles that lie strictly inside the convex hull of all particles. Spatial derivatives for the density equation \eqref{eq:populationUpdate} and characteristic equation \eqref{eq:characteristicUpdate} associated with an edge particle can still be computed using MLS from the set \(\mathcal{C}_i\). However, we note that this neighborhood is no longer centered around the edge particle, but `upwinded' in the direction of the nearest neighboring particles. This feature gives this method its interesting property: the spatial domain does not need to be truncated and discretized. 

For the application to point set registration, reflective boundary conditions at walls are required. These can be implemented using ghost particles. All particles within a certain distance of a reflective wall are reflected across the wall. We refer to the reflected particles as {\em ghost particles}. We associate to each ghost particle the same value of the log-density \(S_{g_i}^n = S_i^n\) and \(S_{g_i, \infty}^n = S_{i, \infty}^n\) as the associated `true' particle. Then, when particles venture close to the boundary, ghost particles are automatically included in their neighborhoods \(\mathcal{C}_i\). Due to the reflection of the log-density to the ghost particle, the ghost particles will act as a reflective force. When a particle is on the boundary, due to symmetry of the stencil, the normal component of the velocity of the particle with respect to the boundary is zero. 

\begin{remark}
	Reflective boundary conditions can alternatively be enforced by imposing zero normal velocity directly in the least-squares problem \cite{suchde_conservation_2018}. We tested this constrained formulation in preliminary numerical experiments; for the configurations considered here, however, the ghost-particle approach exhibited greater numerical robustness. We therefore employ the ghost-particle treatment throughout this work and do not investigate the constrained formulation further.
\end{remark}

\subsection{Evaluation of the score}
\label{section:scoreEvaluation}
In the numerical experiments below, the final density \(\rho_{\infty}\) is estimated using a Gaussian mixture model (GMM) or kernel density estimation (KDE); see
\cite[Chapter~6]{hastie2009elements} and Appendix \ref{appendix:DensityEstimation}. In the GMM case, direct evaluation of the score
\begin{align*}
	\nabla \log \rho_{\infty}(\mathbf{x}_i^n) = \nabla \log \sum_{k=1}^K w_k \phi_k(\mathbf{x}_i^n)
\end{align*}
can cause numerical underflow when all component densities are very small. The score depends only on the relative contributions of the mixture components. Consequently,
multiplying all weighted component densities by the same positive factor does not change the score because this factor cancels between the numerator and denominator.
We exploit this invariance using the log-sum-exp trick \cite{blanchard_accurately_2021}: before exponentiation, we subtract the largest weighted component log-density.
The largest rescaled contribution is then equal to one, while all others lie between zero and one, preventing simultaneous underflow in the numerator and denominator.

For \(k=1,\dots,K\), define the exponent and normalization terms
\begin{align*}
	e_k(\mathbf{x}_i^n) &= -\frac{1}{2}(\mathbf{x}_i^n-\boldsymbol{\mu}_k)^\top \boldsymbol{\Sigma}_k^{-1}(\mathbf{x}_i^n-\boldsymbol{\mu}_k),
	& Z_k &= (2\pi)^{-1}\det(\boldsymbol{\Sigma}_k)^{-1/2},
\end{align*}
such that \(\phi_k(\mathbf{x}_i^n)=Z_k\exp(e_k(\mathbf{x}_i^n))\) for \(d=2\). We introduce the weighted component log-densities and their maximum,
\begin{align*}
	a_{ik}^n &= \log w_k + \log Z_k + e_k(\mathbf{x}_i^n),
	& h_i^n &= \max_{k=1,\dots,K} a_{ik}^n,
\end{align*}
and define the component responsibilities
\begin{align*}
	\gamma_{ik}^n = \frac{\exp(a_{ik}^n-h_i^n)}{\sum_{\ell=1}^K \exp(a_{i\ell}^n-h_i^n)}.
\end{align*}
The density can be recovered as
\begin{align*}
	\rho_\infty(\mathbf{x}_i^n) = \exp(h_i^n)\sum_{k=1}^K \exp(a_{ik}^n-h_i^n),
\end{align*}
while the common factor \(\exp(h_i^n)\) cancels from the score. Since the score of the \(k\)-th Gaussian component is
\(-\boldsymbol{\Sigma}_k^{-1}(\mathbf{x}_i^n-\boldsymbol{\mu}_k)\), the mixture score is the responsibility-weighted average
\begin{align*}
	\nabla \log \rho_\infty(\mathbf{x}_i^n)
	= -\sum_{k=1}^K \gamma_{ik}^n \boldsymbol{\Sigma}_k^{-1}(\mathbf{x}_i^n-\boldsymbol{\mu}_k).
\end{align*}

\section{Monte Carlo simulation of the Langevin stochastic dynamics with boundaries}
\label{section:langevin}
In this section, we outline a stochastic particle method for point set registration. First, in Section \ref{section:MC}, we discuss the discretization and treatment of the boundary conditions. Then, in Section \ref{section:OsmoticPaths}, we discuss how to obtain the osmotic paths from the stochastic particles. Finally, in Section \ref{section:MCAdaptiveRadius}, we give a method to adaptively tune a hyperparameter that balances the bias and Monte Carlo error.  

\subsection{Monte Carlo particle method}
\label{section:MC}
The Langevin SDE \eqref{eq:langevin} is discretized using a standard Euler--Maruyama scheme \cite[Chapter~10]{kloeden_numerical_1992} with \(i = 1, \dots, N\) particles \begin{align}
	\mathbf{X}^{n+1}_i = \mathbf{X}^n_i + \nabla \log \rho_\infty(\mathbf{X}^n_i) \Delta t + \sqrt{2\Delta t} \,\boldsymbol{\xi}^n_i, \quad \boldsymbol{\xi}^n_i \sim \mathcal{N}(\mathbf{0},\mathbf{I}), \label{eq:EM}
\end{align} where \(\mathbf{I} \in \mathbb{R}^{d \times d}\) is the identity matrix. Initial positions \(\mathbf{X}^0_i\) are sampled from the distribution \(\rho_0(\mathbf{x})\). The evaluation of the score \(\nabla \log \rho_\infty(\mathbf{X}^n_i)\) in Equation \eqref{eq:EM} is done using the log-sum-exp trick from Section \ref{section:scoreEvaluation}. This scheme is of weak order one, meaning that expectations of sufficiently smooth functionals of the solution are approximated with first-order accuracy in the time step. Since the diffusion coefficient is constant, the scheme is also of strong order one, i.e., individual sample paths are approximated with first-order accuracy in the mean-square sense. For a more detailed discussion of numerical methods for SDEs, we refer the reader to \cite{kloeden_numerical_1992}.
\begin{figure}
	\centering
	\begin{tikzpicture}[scale=0.9]
		
		\def\R{2}
		
		\coordinate (O) at (0,0);
		
		\draw[thick] (O) circle (\R);
		\draw[<->, thin] (0, 0) -- (0, -2) node[midway, right] {$R$};
		
		\coordinate (Pinside) at (0.8, 1);
		\fill (Pinside) circle (1.3pt);
		\node at (1.1,0.5) {$\Tilde{\mathbf{X}}_i^{n+1}$};
		
		\coordinate (Pout) at (1.69878,2.1234);
		\fill (Pout) circle (1.3pt) node[above right] {$\mathbf{X}_i^{n+1}$};
		
		\coordinate (Poutside) at (3,0.5);
		\fill (Poutside) circle (1.3pt) node[right] {$\mathbf{X}_i^{n}$};
		
		\draw[->, thick] (Poutside) -- (Pout);
		\draw[->, dashed] (Poutside) -- (Pinside);
		\draw[<->, thin] (0, 0) -- (Pout) node[midway, left] {$d$};
		
		\fill (O) circle (1.3pt) node[below left] {$\mathbf X_o$};
		
	\end{tikzpicture}
	\caption{Illustration of the symmetric reflection rule.}
	\label{fig:SymmetricReflection}
	
\end{figure}

In the numerical experiments for bounded domains, we will study the dynamics past fully reflective obstacles. In order to simulate the Skorokhod problem
\cite{lions_sznitman_reflected} in the presence of obstacles, we adopt a consistent approximation. For simplicity, we consider circular obstacles with center
\(\mathbf{X}_o\) and radius \(R\), and we denote by \(\Tilde{\mathbf{X}}_i^{n+1}\) the position of the particle generated by Equation \eqref{eq:EM} that is inside the circle,
i.e., $\|\Tilde{\mathbf{X}}^{n+1}_i - \mathbf{X}_o\| < R$. Here, the subscript \(i\) indexes the particles, while the subscript \(o\) identifies the obstacle.
Then, the particles are symmetrically reflected outward. That is, they are placed at a distance \begin{align*}
	d = R + \left(R - \| \Tilde{\mathbf{X}}^{n+1}_i - \mathbf{X}_o \|\right) = 2R - \| \Tilde{\mathbf{X}}^{n+1}_i - \mathbf{X}_o \| 
\end{align*} from the center of the obstacle \(\mathbf{X}_o\) along the same radial direction. The final update for their position reads \begin{align*}
	\mathbf{X}^{n+1}_i = \mathbf{X}_o +  d \frac{\Tilde{\mathbf{X}}^{n+1}_i - \mathbf{X}_o}{\|\Tilde{\mathbf{X}}^{n+1}_i - \mathbf{X}_o\|}. 
\end{align*} See also Figure \ref{fig:SymmetricReflection}. 
\subsection{Osmotic paths}
\label{section:OsmoticPaths}
We now require a method to extract the `osmotic paths', which are the solutions of \begin{align}
	\frac{d\mathbf{x}}{dt} & =  \nabla S_\infty(\mathbf{x}) - \nabla S(\mathbf{x}), \quad \mathbf{x}(0) = \mathbf{X}_c, \label{eq:osmoticPaths}
\end{align} from the stochastic paths \(\{\mathbf{X}_i^n\}_{i = 1 \dots N}^{n = 0 \dots N_t}\) generated by the discretized SDE \eqref{eq:EM}. We do so by considering a small circular neighborhood of radius \(r\) centered at \(\mathbf{X}_c^n\). We define the indices of the particles inside the small circle as \(C^n = \{i \in \{1,\ldots,N\} : \norm{\mathbf{X}_i^n - \mathbf{X}_c^n}_2 \leq r\}\), and \(\abs{C^n}\) denotes the number of such particles. Then, we solve \eqref{eq:osmoticPaths} with a first-order method, \begin{align}
	\mathbf{X}_c^{n+1} = \mathbf{X}_c^n + \Delta t \left[ \nabla S_\infty(\mathbf{X}_c^n) - \nabla S(\mathbf{X}_c^n, n \Delta t) \right]. \label{eq:osmoticPathsdiscretized}
\end{align} The score of \(\rho_{\infty}\), i.e. \(\nabla S_\infty\), can be evaluated at the center of the small circle \(\mathbf{X}_c^n\); however, the score of \(\rho\) must be approximated using the particles in the local neighborhood \(C^n\). Theorem \ref{theorem:velocity} yields a method to achieve this.  

First, we define the following objects. Let \(p : \mathbb{R}^2 \rightarrow \mathbb{R}\) be a twice continuously differentiable probability density function. For \(\mathbf{x} \in \mathbb{R}^2\) and \(r > 0\), define the ball \(B(\mathbf{x}, r) = \{\mathbf{y} \in \mathbb{R}^2 : \vert \mathbf{y} - \mathbf{x} \vert \leq r \}\). The ball \(B(\mathbf{x}, r)\) is the continuous neighborhood corresponding to the index set \(C^n\). Now, define 
\begin{align*}
	m(\mathbf{x}, r) := \int_{B(\mathbf{x}, r)} p(\mathbf{y}) d\mathbf{y}, \quad I(\mathbf{x}, r) := \int_{B(\mathbf{x}, r)} (\mathbf{y} - \mathbf{x}) p(\mathbf{y}) d\mathbf{y}, 
\end{align*} 
and the local mean-shift vector
\begin{align}
	V(\mathbf{x}, r) := \frac{I(\mathbf{x}, r)}{m(\mathbf{x}, r)} = \frac{\int_{B(\mathbf{x}, r)} (\mathbf{y} - \mathbf{x}) p(\mathbf{y})d\mathbf{y}}{\int_{B(\mathbf{x}, r)} p(\mathbf{y})d\mathbf{y}} \label{eq:scoreVelocity}. 
\end{align} 
The following theorem connects the score of a density \(p\) with the local mean-shift vector in Equation \eqref{eq:scoreVelocity}. \begin{theorem} \label{theorem:velocity}
	Given \(\mathbf{x} \in \Omega\), suppose that the density \(p\) is of class \(C^3\) and \(p(\mathbf{x}) > 0\). Then, the mean-shift vector \(V(\mathbf{x},r)\)
	defined in \eqref{eq:scoreVelocity} satisfies
	\begin{align*}
		V(\mathbf{x}, r) = \frac{r^2}{4} \frac{\nabla p(\mathbf{x})}{p(\mathbf{x})} + \mathcal{O}(r^4),
	\end{align*} or equivalently, \begin{align}
		\frac{\nabla p(\mathbf{x})}{p(\mathbf{x})} = \frac{4}{r^2} V(\mathbf{x}, r) + \mathcal{O}(r^2). \label{eq:scoreEq}
	\end{align}
\end{theorem}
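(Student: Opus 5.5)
The plan is to Taylor-expand $p$ around $\mathbf{x}$ inside the ball and integrate term by term, exploiting the symmetry of $B(\mathbf{x},r)$. Substitute $\mathbf{y} = \mathbf{x} + \mathbf{h}$ with $|\mathbf{h}|\le r$. Since $p \in C^3$, we can write
\[
p(\mathbf{x}+\mathbf{h}) = p(\mathbf{x}) + \nabla p(\mathbf{x})\cdot\mathbf{h} + \tfrac12 \mathbf{h}^\top \nabla^2 p(\mathbf{x})\mathbf{h} + R_3(\mathbf{h}),
\]
where $|R_3(\mathbf{h})| \le C|\mathbf{h}|^3$ uniformly for $|\mathbf{h}|\le r \le r_0$. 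Here $C$ is a bound on the third derivatives over a compact neighbourhood of $\mathbf{x}$. We take $r_0$ small enough that this neighbourhood, and the ball, lie where $p$ is defined; since $p$ is a density on $\mathbb{R}^2$ this is automatic.

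\textbf{Denominator.} Odd moments of $\mathbf{h}$ over the centered disk vanish. Hence
\[
m(\mathbf{x},r) = \pi r^2 p(\mathbf{x}) + \tfrac12\int_{|\mathbf{h}|\le r}\mathbf{h}^\top\nabla^2 p\,\mathbf{h}\,d\mathbf{h} + \mathcal{O}(r^5) = \pi r^2 p(\mathbf{x}) + \mathcal{O}(r^4).
\]
The quadratic term equals $\tfrac{\pi r^4}{8}\Delta p(\mathbf{x})$, using $\int h_k h_l = \delta_{kl}\pi r^4/4$.

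\textbf{Numerator.} In $I(\mathbf{x},r) = \int \mathbf{h}\,p(\mathbf{x}+\mathbf{h})\,d\mathbf{h}$, the constant and quadratic terms of the expansion yield odd integrands and therefore vanish. The linear term gives
\[
\int \mathbf{h}\,\mathbf{h}^\top d\mathbf{h}\;\nabla p(\mathbf{x}) = \frac{\pi r^4}{4}\nabla p(\mathbf{x}).
\]
The remainder contributes at most $C\int|\mathbf{h}|^4\,d\mathbf{h} = \mathcal{O}(r^6)$. Thus $I = \frac{\pi r^4}{4}\nabla p(\mathbf{x}) + \mathcal{O}(r^6)$.

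\textbf{Quotient.} Because $p(\mathbf{x})>0$, we have $m = \pi r^2 p(\mathbf{x})\,(1+\mathcal{O}(r^2))$, and the $\mathcal{O}(r^2)$ term is bounded away from $-1$ for small $r$. Therefore
\[
V(\mathbf{x},r) = \frac{\tfrac{\pi r^4}{4}\nabla p(\mathbf{x}) + \mathcal{O}(r^6)}{\pi r^2 p(\mathbf{x})(1+\mathcal{O}(r^2))} = \frac{r^2}{4}\frac{\nabla p(\mathbf{x})}{p(\mathbf{x})} + \mathcal{O}(r^4).
\]
Multiplying by $4/r^2$ gives \eqref{eq:scoreEq}.

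\textbf{Main obstacle.} The only delicate point is the bookkeeping of orders. The numerator needs the quadratic term to vanish by parity and the cubic remainder to be $\mathcal{O}(r^6)$; this is exactly where the $C^3$ hypothesis is needed, since $C^2$ alone would give only $o(r^5)$. The remaining care is to confirm that the second moment of the disk is $\pi r^4/4$ per coordinate, which follows from polar coordinates: $\int_0^r\int_0^{2\pi}\rho^2\cos^2\theta\,\rho\,d\theta\,d\rho = \pi r^4/4$.
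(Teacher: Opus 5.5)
Your proposal is correct and follows essentially the same route as the paper's proof. Both arguments Taylor-expand $p$ about $\mathbf{x}$ to second order, use parity on the centered disk to eliminate odd moments, apply the second-moment identity $\int_{|\mathbf{h}|\le r} h_k h_l \, d\mathbf{h} = \delta_{kl}\pi r^4/4$, and finish by expanding the quotient. Your version is only slightly more explicit in the remainder bookkeeping: you give a uniform $C|\mathbf{h}|^3$ bound, track the $\frac{\pi r^4}{8}\Delta p(\mathbf{x})$ correction in $m$, and note that the hypothesis $p(\mathbf{x})>0$ keeps the denominator away from zero.
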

\begin{proof}
	Making the change of variables \(\mathbf{z} = \mathbf{y} - \mathbf{x}\) gives \begin{align}
		m(\mathbf{x}, r) = \int_{\abs{\mathbf{z}} \leq r} p(\mathbf{x} + \mathbf{z})d\mathbf{z}, \quad I(\mathbf{x}, r) = \int_{\abs{\mathbf{z}} \leq r} \mathbf{z} p(\mathbf{x} + \mathbf{z}) d\mathbf{z}. \label{eq:numAndDenum} 
	\end{align} We perform a Taylor expansion of \(p\) at \(x\), \begin{align*}
		p(\mathbf{x} + \mathbf{z}) = p(\mathbf{x}) + \nabla p(\mathbf{x}) \cdot \mathbf{z} + \frac{1}{2} \mathbf{z}^\top H_p(\mathbf{x}) \mathbf{z} + \mathcal{O}(\abs{\mathbf{z}}^3),
	\end{align*} where \( H_p(\mathbf{x})\) denotes the Hessian. Then, the integrands in Equation \eqref{eq:numAndDenum} can be expanded. For the mass \(m(\mathbf{x}, r)\), we find \begin{align}
		m(\mathbf{x}, r) &= \int_{\abs{\mathbf{z}} \leq r} \left[p(\mathbf{x}) + \nabla p(\mathbf{x}) \cdot \mathbf{z} + \frac{1}{2} \mathbf{z}^\top H_p(\mathbf{x}) \mathbf{z} + \mathcal{O}(\abs{\mathbf{z}}^3)\right] d\mathbf{z} = \pi r^2 p(\mathbf{x}) + \mathcal{O}(r^4), \label{eq:scoreMass}	\end{align} where the linear term vanishes due to symmetry. Expansion of the numerator yields \begin{align*}
		I(\mathbf{x}, r) = \int_{\abs{\mathbf{z}} \leq r} \mathbf{z} \left[p(\mathbf{x}) + \nabla p(\mathbf{x}) \cdot \mathbf{z} + \frac{1}{2} \mathbf{z}^\top H_p(\mathbf{x}) \mathbf{z} + \mathcal{O}(\abs{\mathbf{z}}^3) \right] d\mathbf{z}. 	\end{align*} The odd terms in \(z\) vanish due to symmetry. The remaining quadratic term yields \begin{align*}
		\int_{\abs{\mathbf{z}} \leq r} \mathbf{z} \left( \nabla p(\mathbf{x}) \cdot \mathbf{z}\right) d\mathbf{z} = \left( \int_{\abs{\mathbf{z}} \leq r} \mathbf{z} \mathbf{z}^\top d\mathbf{z} \right) \nabla p(\mathbf{x}) = \frac{\pi r^4}{4} \nabla p(\mathbf{x}), 
	\end{align*} such that \begin{align}
		I(\mathbf{x}, r) = \frac{\pi r^4}{4} \nabla p(\mathbf{x}) + \mathcal{O}(r^6). \label{eq:scoreNumerator}
	\end{align} Combining Equations \eqref{eq:scoreMass} and \eqref{eq:scoreNumerator}, we find \begin{align*}
		V(\mathbf{x}, r) = \frac{I(\mathbf{x}, r)}{m(\mathbf{x}, r)} = \frac{\frac{\pi r^4}{4} \nabla p(\mathbf{x}) + \mathcal{O}(r^6)}{\pi r^2 p(\mathbf{x}) + \mathcal{O}(r^4)} = \frac{r^2}{4} \frac{\nabla p(\mathbf{x})}{p(\mathbf{x})}\left(1 + \mathcal{O}(r^2)\right) = \frac{r^2}{4} \frac{\nabla p(\mathbf{x})}{p(\mathbf{x})} + \mathcal{O}(r^4). 
	\end{align*} This completes the proof.
\end{proof} We are given a collection of particles \(\mathbf{X}_i^n\), generated by the discretized Langevin dynamics \eqref{eq:EM}, which thus sample the distribution \(\rho(\mathbf{x}, n\Delta t)\). These particles can now be used to approximate the score of \(\rho(\mathbf{x}, n\Delta t)\) by combining the result of the previous theorem \eqref{eq:scoreEq} and by approximating the integrals in Equation \eqref{eq:scoreVelocity} in a Monte Carlo fashion, \begin{align}
	\nabla S(\mathbf{X}_c^n, n \Delta t) = \frac{\nabla \rho(\mathbf{X}_c^n, n \Delta t)}{\rho(\mathbf{X}_c^n, n \Delta t)} \approx \frac{4}{r^2} \frac{1}{\abs{C^n}} \sum_{i \in C^n} \left(\mathbf{X}_i^n - \mathbf{X}_c^n\right). \label{eq:meanShift}
\end{align} The right-hand side of Equation \eqref{eq:meanShift} contains the mean shift vector with respect to the center of the circle. If the density gradient is zero, the mean shift vector will be zero, due to the symmetry of the particles \(\mathbf{X}_i^n\) around \(\mathbf{X}_c^n\). In contrast, when the density gradient is nonzero, on average, particles will be more concentrated in the direction of the gradient. As a result, the mean shift vector will tend to point along the gradient direction, with a magnitude proportional to the magnitude of the gradient. The size of the circular region \( r \) is a hyperparameter that controls the trade-off between statistical and deterministic errors along the osmotic path. For larger regions, the statistical error of the mean shift vector is reduced, as more samples contribute to the estimate. However, this comes at the cost of increased bias due to higher-order terms in the Taylor expansion. Conversely, smaller regions reduce this bias but lead to higher statistical error. The mean shift vector was first derived in \cite{fukunaga_estimation_1975} in the context of nonparametric gradient estimation of densities and is widely used in mode seeking, clustering, and density optimization \cite{yizong_cheng_mean_1995}. 

\subsubsection{Optimal radius for bias-variance balancing of the score estimator}
\label{section:MCAdaptiveRadius}
We now use the score estimator \eqref{eq:meanShift} to relate the optimal radius to the local particle density to balance the bias and variance of the score estimator. The deterministic truncation error is \(\mathcal{O}(r^2)\) by Theorem~\ref{theorem:velocity}. Each displacement \(\mathbf{X}_i^n-\mathbf{X}_c^n\), with \(i\in C^n\), has magnitude at most \(r\) and therefore a standard deviation of order \(r\). Consequently, the standard deviation of the sample mean in \eqref{eq:meanShift} is \(\mathcal{O}(r/\sqrt{\abs{C^n}})\). Multiplication by \(4/r^2\) gives the statistical error
\[
	\mathcal{O}\left(\frac{1}{r\sqrt{\abs{C^n}}}\right).
\]
The expected number of particles in the neighborhood is
\[
	\mathbb{E}\left[\abs{C^n}\right]
	= N\int_{B(\mathbf{X}_c^n,r)}\rho(\mathbf{y},n\Delta t)\,d\mathbf{y}
	\approx N\pi r^2\rho(\mathbf{X}_c^n,n\Delta t),
\]
where \(N\) is the total number of Monte Carlo particles. Writing \(\rho=\rho(\mathbf{X}_c^n,n\Delta t)\) for brevity and replacing \(\abs{C^n}\) by its leading-order expectation, the total error in the score estimate is
\[
	{\rm err} = \mathcal{O}(r^2)+\mathcal{O}\left(\frac{1}{r^2\sqrt{N\rho}}\right).
\]
Let $A$ and $B$ denote constants such that
\[
    {\rm err} = Ar^2 + \frac{B}{r^2\sqrt{N\rho}} + {\rm h.o.t.}
\]
Neglecting higher-order terms, the minimum is obtained by differentiating the expression with respect to $r$, giving
\[
    2A\ropt - \frac{2B}{\ropt^3\sqrt{N\rho}} = 0
\]
and hence
\[
    \ropt^4 = \frac{B}{A\sqrt{N\rho}}.
\]
Thus, the optimal radius and the expected number of particles inside the neighborhood scale with the density as
\[
	\ropt = \mathcal{O}\left((N\rho)^{-1/8}\right) \quad \Longrightarrow \quad \mathbb{E}\left[\abs{C^n}\right] = \mathcal{O}\left((N\rho)^{3/4}\right).
\] 
We introduce a proportionality constant for the optimal radius as \begin{align}
		\ropt(\rho) = \kappa (N\rho)^{-1/8} = r_{\mathrm{ref}} \left( \frac{N_{\mathrm{ref}} \rho_{\mathrm{ref}}}{N\rho} \right)^{1/8}, \label{eq:rOpt}
\end{align} with \(\kappa = r_{\mathrm{ref}} \left( N_{\mathrm{ref}} \rho_{\mathrm{ref}} \right)^{1/8}\). We note that the density in equation \eqref{eq:rOpt} is position and time-dependent. The proportionality constant \(\kappa\) is calibrated empirically
using the simple problem presented in Section \ref{section:radiusCalibration}. In principle, its value is problem-dependent and should therefore be recalibrated for each
simulation. However, since all point sets considered here are represented by Gaussian distributions of comparable scale, we use the same calibrated value in the subsequent
experiments. We use the following algorithm to update the radius of a small circular neighborhood. We initialize the radius as \(r_0 = r_{\text{init}}\).
Then, in every time step, the density in the ball is estimated using the radius and number of particles in the ball at the previous time step \begin{align}
		\hat{\rho}^n = \frac{\abs{C^{n-1}}}{N \pi (r^{n-1})^2}. \label{eq:densityEst}
\end{align} Then, the new radius is computed from the lagged density estimate and \eqref{eq:rOpt} as \(r^n = \ropt(\hat{\rho}^n)\). For robustness, we apply two additional safeguards. First, if the old neighborhood is empty, i.e., \(\abs{C^{n-1}} = 0\), the density is undefined and we set \(r^n = 2r^{n-1}\). In either case, the resulting radius is restricted to the interval \([r_{\min},r_{\max}] = [1\times 10^{-3}, 1]\), thereby preventing excessively small or large neighborhoods. The complete Monte Carlo procedure is summarized in Algorithm~\ref{alg:MC}.
\begin{remark}
	Alternatively to the procedure above, one could recompute the density using the new radius and iterate to obtain a more self-consistent relation between the local density and the optimal radius. 
\end{remark}
\begin{remark}
	The same particle ensemble is used both to estimate the local density in \eqref{eq:densityEst} and to evaluate the mean-shift estimator in \eqref{eq:meanShift}. Consequently, the adaptive radius and the score estimate are statistically dependent. This dependence can be reduced by sample splitting: one independent particle ensemble determines the radius, while a second ensemble evaluates the score. Independence is not required for consistency as the radius is guaranted to stay bounded; and sample splitting requires additional particles and thus increases computational cost. In the point-set registration experiment, the shared- and split-ensemble formulations produced nearly identical paths. Thus, for the particle counts considered here, the effect of this statistical dependence appears negligible. 
\end{remark}

\begin{algorithm}[H]
	\caption{Monte Carlo approximation of osmotic paths with adaptive radii}
	\label{alg:MC}
	\begin{algorithmic}[1]
		\Require Initial density $\rho_0$, target score $\nabla S_\infty$, time step $\Delta t$, number of steps $N_t$, number of Monte Carlo particles $N$, initial path
		centers $\{\mathbf{X}_{c,k}^0\}_{k=1}^{M}$, initial radius $r_{\mathrm{init}}$, radius bounds $r_{\min}$ and $r_{\max}$, and calibrated values $r_{\mathrm{ref}}$,
		$\rho_{\mathrm{ref}}$, and $N_{\mathrm{ref}}$
		\State Sample $\mathbf{X}_i^0\sim\rho_0$ for $i=1,\ldots,N$
		\For{$k=1,\ldots,M$}
			\State $r_k^{\mathrm{old}}\gets r_{\mathrm{init}}$
			\State $C_k^{\mathrm{old}}\gets\{i:\|\mathbf{X}_i^0-\mathbf{X}_{c,k}^0\|\leq r_k^{\mathrm{old}}\}$
		\EndFor
		\For{$n=0,\ldots,N_t-1$}
			\For{$k=1,\ldots,M$}
				\If{$|C_k^{\mathrm{old}}|>0$}
					\State $\widehat{\rho}_k^n\gets |C_k^{\mathrm{old}}|/[N\pi(r_k^{\mathrm{old}})^2]$
					\State $r_k^n\gets r_{\mathrm{ref}}[N_{\mathrm{ref}}\rho_{\mathrm{ref}}/(N\widehat{\rho}_k^n)]^{1/8}$
				\Else
					\State $r_k^n\gets 2r_k^{\mathrm{old}}$
				\EndIf
				\State $r_k^n\gets\min\{r_{\max},\max\{r_{\min},r_k^n\}\}$
				\State $C_k^n\gets\{i:\|\mathbf{X}_i^n-\mathbf{X}_{c,k}^n\|\leq r_k^n\}$
				\If{$|C_k^n|>0$}
					\State $\widehat{\nabla S}_k^n\gets \dfrac{4}{(r_k^n)^2|C_k^n|}\displaystyle\sum_{i\in C_k^n}(\mathbf{X}_i^n-\mathbf{X}_{c,k}^n)$
					\State $\widetilde{\mathbf{X}}_{c,k}^{n+1}\gets\mathbf{X}_{c,k}^n+\Delta t[\nabla S_\infty(\mathbf{X}_{c,k}^n)-\widehat{\nabla S}_k^n]$
					\State Reflect $\widetilde{\mathbf{X}}_{c,k}^{n+1}$ at the boundary to obtain $\mathbf{X}_{c,k}^{n+1}$
				\Else
					\State $\mathbf{X}_{c,k}^{n+1}\gets\mathbf{X}_{c,k}^{n}$
				\EndIf
				\State $r_k^{\mathrm{old}}\gets r_k^n$ and $C_k^{\mathrm{old}}\gets C_k^n$
			\EndFor
			\For{$i=1,\ldots,N$}
				\State Draw $\boldsymbol{\xi}_i^n\sim\mathcal{N}(\mathbf{0},\mathbf{I})$
				\State $\widetilde{\mathbf{X}}_i^{n+1}\gets\mathbf{X}_i^n+\Delta t\,\nabla S_\infty(\mathbf{X}_i^n)+\sqrt{2\Delta t}\,\boldsymbol{\xi}_i^n$
				\State Reflect $\widetilde{\mathbf{X}}_i^{n+1}$ at the boundary to obtain $\mathbf{X}_i^{n+1}$
			\EndFor
		\EndFor
	\end{algorithmic}
\end{algorithm}
\clearpage

\section{Numerical experiments}
\label{section:numerics}
In this section, we perform several tests to evaluate the performance of the proposed MLS method and the stochastic particle method. First, in Section \ref{section:radiusCalibration}, we empirically obtain an optimal scaling constant for the adaptive radius algorithm used to obtain the osmotic paths from the random trajectories in the Monte Carlo method. Then, in Section \ref{section:DiffusionTestCase}, we consider a simple test case with only diffusion to check the order of the MLS method and verify the behavior of the stochastic method. In Section \ref{section:ADTestCase}, we use the FPE to map two Gaussians onto one another. In Section \ref{section:gaussianAcrossCylinder}, we add an obstacle to verify the reflective boundary conditions. Finally, in Section \ref{section:PSRAcrossCylinder}, we apply both methods to a test case for point set registration \cite{iollo_point-set_2025}. The software used for all simulations and figure generation is publicly available at~\cite{Willems_Moving_Least_Squares}.

\subsection{Calibration of the adaptive-radius constant}
\label{section:radiusCalibration}
We consider an initial Gaussian \(\rho_0 = \mathcal{N}(\mu_0, \sigma_0^2 I_2)\) with \(\mu_0 = [0, 0]^\top\) and \(\sigma_0 = 1\). The exact score of the initial Gaussian is \begin{align}
	\nabla \log \rho_0(\mathbf{X}) = -\mathbf{X}. 
\end{align} To calibrate the adaptive radius function \eqref{eq:rOpt}, we proceed in the following manner. We generate 10 logarithmically spaced candidate radii between \(1 \times 10^{-2}\) and \(1\). The reference density is defined as the median of the exact initial density evaluated at the 51 region centers, \begin{align}
	\rho_{\mathrm{ref}} = \underset{k=1,\ldots,51}{\operatorname{median}}\, \rho_0(\mathbf{X}_{c,k}^0).
\end{align} It therefore represents a typical density at the locations where the score is estimated. For each candidate reference radius \(r_{\mathrm{ref}}\), the radius of the \(k\)-th region is set to \begin{align}
	r_k = r_{\mathrm{ref}} \left( \frac{\rho_{\mathrm{ref}}}{\rho_0(\mathbf{X}_{c,k}^0)} \right)^{1/8},
\end{align} in accordance with \eqref{eq:rOpt}, with \(N=N_{\mathrm{ref}}\). For every reference radius, we estimate the score of the initial Gaussian using the method from Section \ref{section:OsmoticPaths}. The error of the score is then averaged over \(20\) realizations of \(N_{\mathrm{ref}} = 10^7\) particles and 51 regions, i.e., \begin{align}
	E(r_{\mathrm{ref}})
	= \frac{1}{51 \times 20}
	\sum_{\ell=1}^{20}
	\sum_{k=1}^{51}
	\left\|
	\widehat{\nabla \log \rho_0}^{\,(\ell)}
	(\mathbf{X}_{c,k}^0; r_k)
	+ \mathbf{X}_{c,k}^0
	\right\|_2^2. \label{eq:rOptError}
\end{align} We select the candidate reference radius \(r_{\mathrm{ref}}\) that minimizes \eqref{eq:rOptError}. The proportionality constant is subsequently computed as \(\kappa=r_{\mathrm{ref}}(N_{\mathrm{ref}}\rho_{\mathrm{ref}})^{1/8}\). This gives \begin{align}
	r_{\mathrm{ref}} = 2.15443469 \times 10^{-1}, \; \rho_{\mathrm{ref}} = 7.43962967 \times 10^{-2}, \; N_{\mathrm{ref}} = 10^7, \; \kappa = 1.1675547312210792. 
\end{align} 

\subsection{Diffusion test case}
\label{section:DiffusionTestCase}

We consider an initial Gaussian \(\rho_0 = \mathcal{N}(\mu_0, \sigma_0^2 I_2)\) with \(\mu_0 = [0, 0]^\top\) and \(\sigma_0 = 1\). The target log-density \(S_{\infty}\) is set to zero so that only diffusion takes place. In this simplified case, the analytical solution for the density is given by \begin{align*}
	\rho(\mathbf{x}, t) = \frac{B}{t + A} \exp(-\frac{x^2 + y^2}{4 (t + A)}),
\end{align*} with \(A = 0.5 \sigma_0^2\) and \(B = \frac{1}{4 \pi}\). 

\subsubsection{MLS}
\label{section:DiffusionMLS}
The initial points for the MLS method are sampled from \(\rho_0\) in the following way. First, \(N\) quasi-random points on the domain \([0, 1]^2\) are generated using a van der Corput sequence \cite{niederreiter_random_1992}. These points are then mapped to the distribution \(\rho_0\) using the inverse transform sampling method \cite{caflisch_monte_1998}. In Figure \ref{fig:convergenceInitialDistribution}, the initial distribution of points for \(N = 254\) is plotted. The simulation is performed with an increasing number of particles up to time \(t_{\rm f} = 10\). The neighborhoods of particles consist of \(N_b = 16\) points. Since the characteristic interparticle distance in two dimensions scales as \(h\sim N^{-1/2}\), we refine the maximum time step according to \begin{align*}
	\Delta t_{\max}=\frac{4}{\sqrt{N}}=O(h).
\end{align*} The eigenvalue-based adaptive criterion does not impose a smaller time step in this test. Because the logarithm of the Gaussian solution remains quadratic, its spatial derivatives are reproduced exactly by the quadratic MLS approximation, up to numerical errors. The observed errors are therefore dominated by the temporal discretization. Accordingly, the Euler and midpoint methods exhibit errors of order \(O(\Delta t)=O(N^{-1/2})\) and \(O(\Delta t^2)=O(N^{-1})\), respectively. The error at final time is measured in the relative \(L_1\) norm \begin{align}
	L_{1, \rho} = \frac{ \sum_{i=1}^N \abs{\exp S_i - \rho(\mathbf{x}_i, t)}}{\sum_{i=1}^N \abs{\rho(\mathbf{x}_i, t)}}. \label{eq:L1DensityError}
\end{align} 
The convergence graph for the first-order Euler method and the second-order IMEX midpoint method is shown in Figure \ref{fig:convergenceDiffusion}. We observe first- and second-order convergence for the Euler and midpoint methods, respectively, consistent with their temporal discretization errors. In Figure \ref{fig:convergenceDiffusionParticlePaths}, the paths of several particles are plotted. As expected, we observe that the particles travel on straight lines outward from the origin. 
\begin{figure}
	\centering
	\begin{subfigure}[b]{0.49\textwidth}
		\centering
		\includegraphics[width=\linewidth]{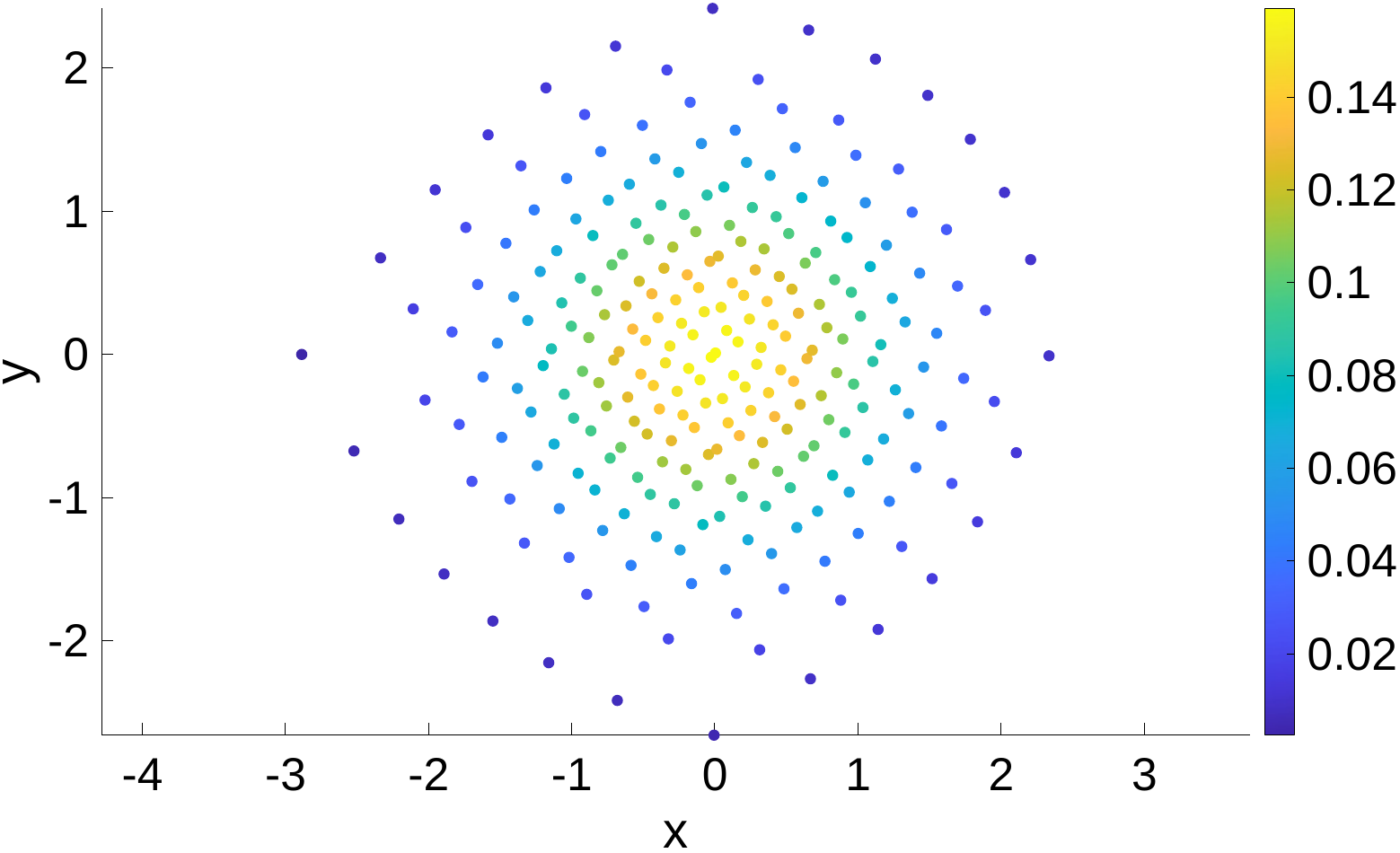}
		\caption{Initial distribution of points for \(N = 254\).}
		\label{fig:convergenceInitialDistribution}
	\end{subfigure}
	\begin{subfigure}[b]{0.49\textwidth}
		\centering
		\includegraphics[width=\linewidth]{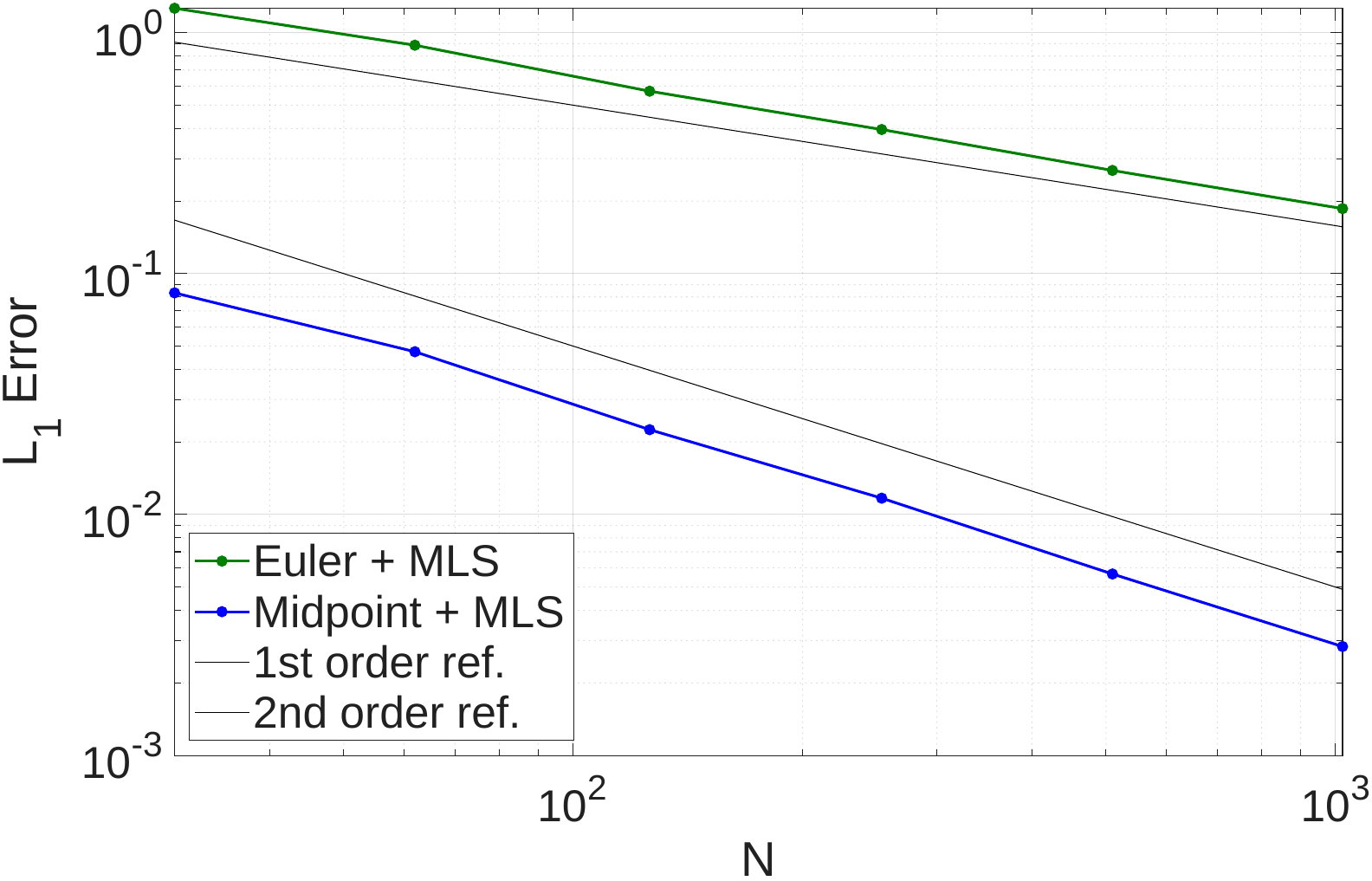}
		\caption{Convergence graph. }
		\label{fig:convergenceDiffusion}
	\end{subfigure}
	\begin{subfigure}[b]{0.49\textwidth}
		\centering
		\includegraphics[width=\linewidth]{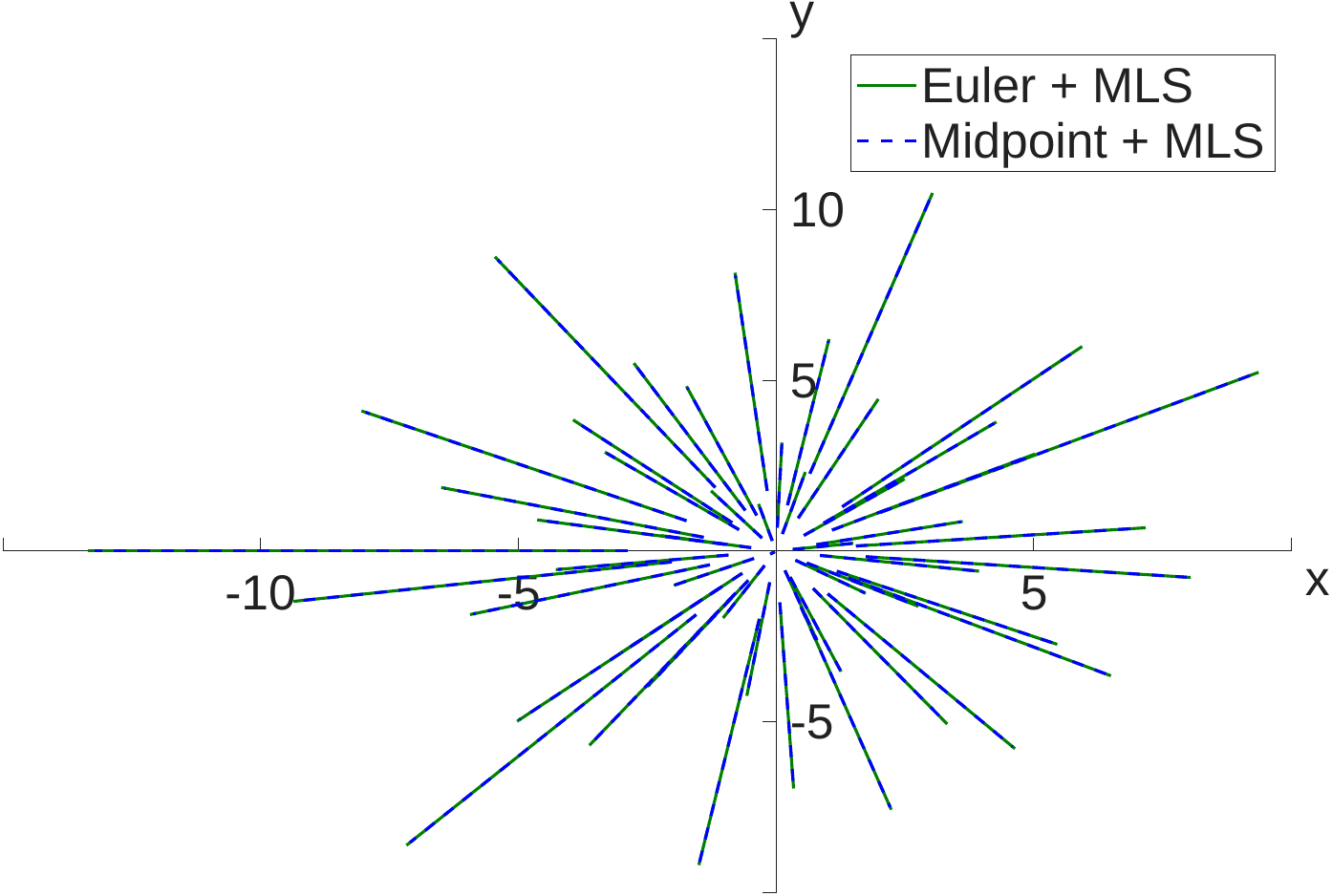}
		\caption{The trajectories of several particles for \(N = 254\).}
		\label{fig:convergenceDiffusionParticlePaths}
	\end{subfigure}
	\caption{Convergence analysis of the MLS scheme for diffusion of a Gaussian.}
	\label{fig:convergence1}
\end{figure}

\subsubsection{Stochastic method}
\label{section:DiffusionMC}
We now perform the same test case with the Monte Carlo method described in Section \ref{section:MC}. One million particles are randomly sampled from the initial distribution. These Monte Carlo particles are then advanced using the discretized SDE for 200 time steps, with \(\Delta t = 1 \times 10^{-2}\), up to time 2. To obtain the osmotic paths, we place several small regions of fixed radius \(r_{\mathrm{init}} = 0.13\) at the same locations as in the MLS simulation, see Figure \ref{fig:convergenceDiffusionParticlePaths}. Initially, the small regions contain between 494 and 31,968 particles, depending on the location of the small region compared to the center of the initial distribution. The osmotic paths are then obtained by advancing the location of the small regions using the approximation of the score, see Section \ref{section:OsmoticPaths}. The locations of the particles and the osmotic paths are plotted in Figure \ref{fig:DiffusionTestMC1}. As before, we observe the small regions move on radially outward trajectories, with the expected Monte Carlo noise. Initially, some small regions lie at the edge of the particle cloud and therefore contain few particles, resulting in noisy paths. Then, as time evolves, the average number of particles per region decreases, and the same \(10^6\) particles spread out over a larger area. Thus, as time increases, the paths also become noisier. In Figure \ref{fig:DiffusionTestMC2}, the same simulation is performed with the adaptive radius method from Section \ref{section:MCAdaptiveRadius}. Clearly, the adaptive radius selection is able to balance the bias and Monte Carlo error, significantly improving the results. 
\begin{figure}
	\centering
	\begin{subfigure}[b]{0.49\textwidth}
		\centering
		\includegraphics[width=\linewidth]{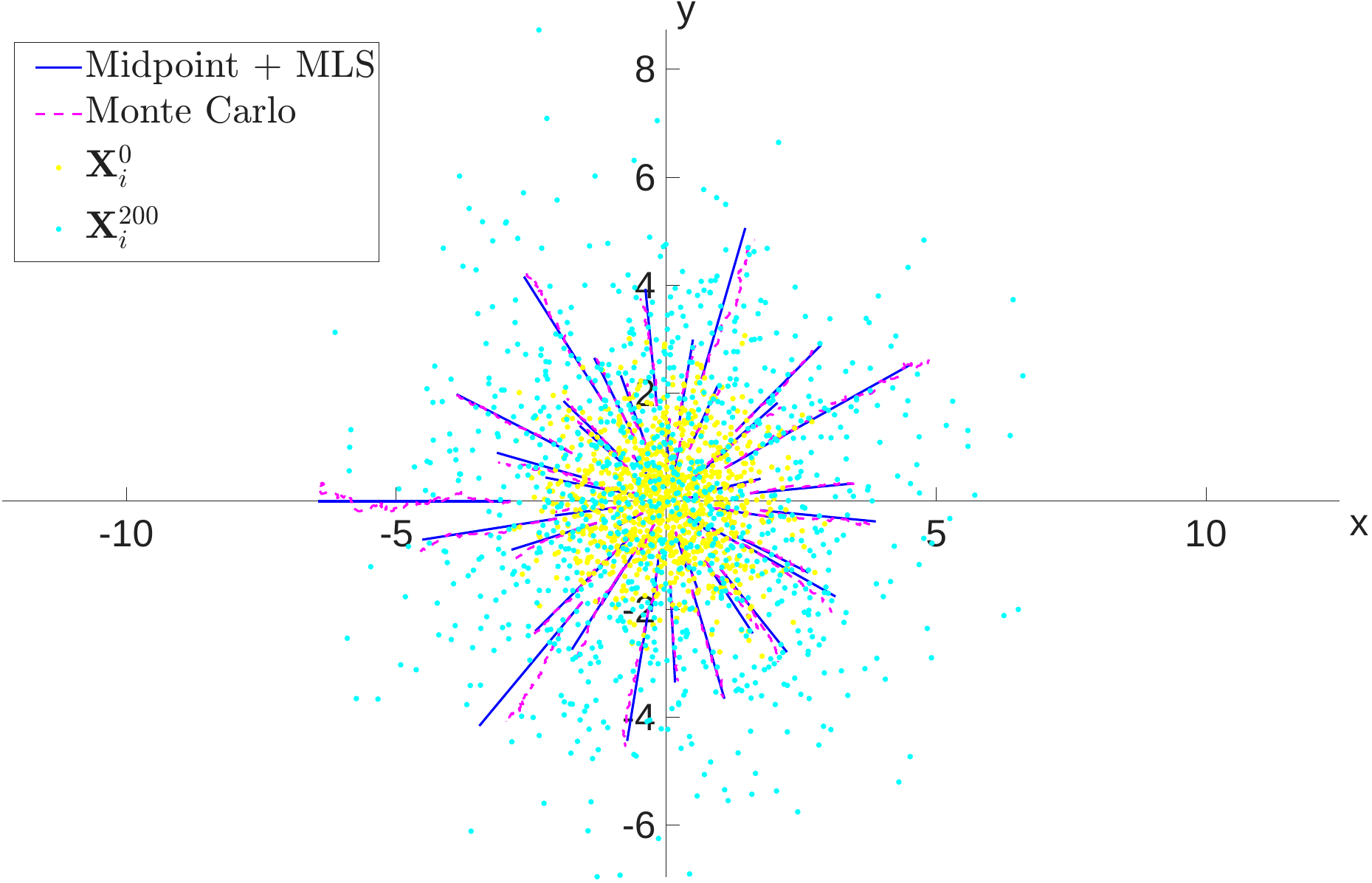}
		\caption{Fixed radius.}
		\label{fig:DiffusionTestMC1}
	\end{subfigure}
	\begin{subfigure}[b]{0.49\textwidth}
		\centering
		\includegraphics[width=\linewidth]{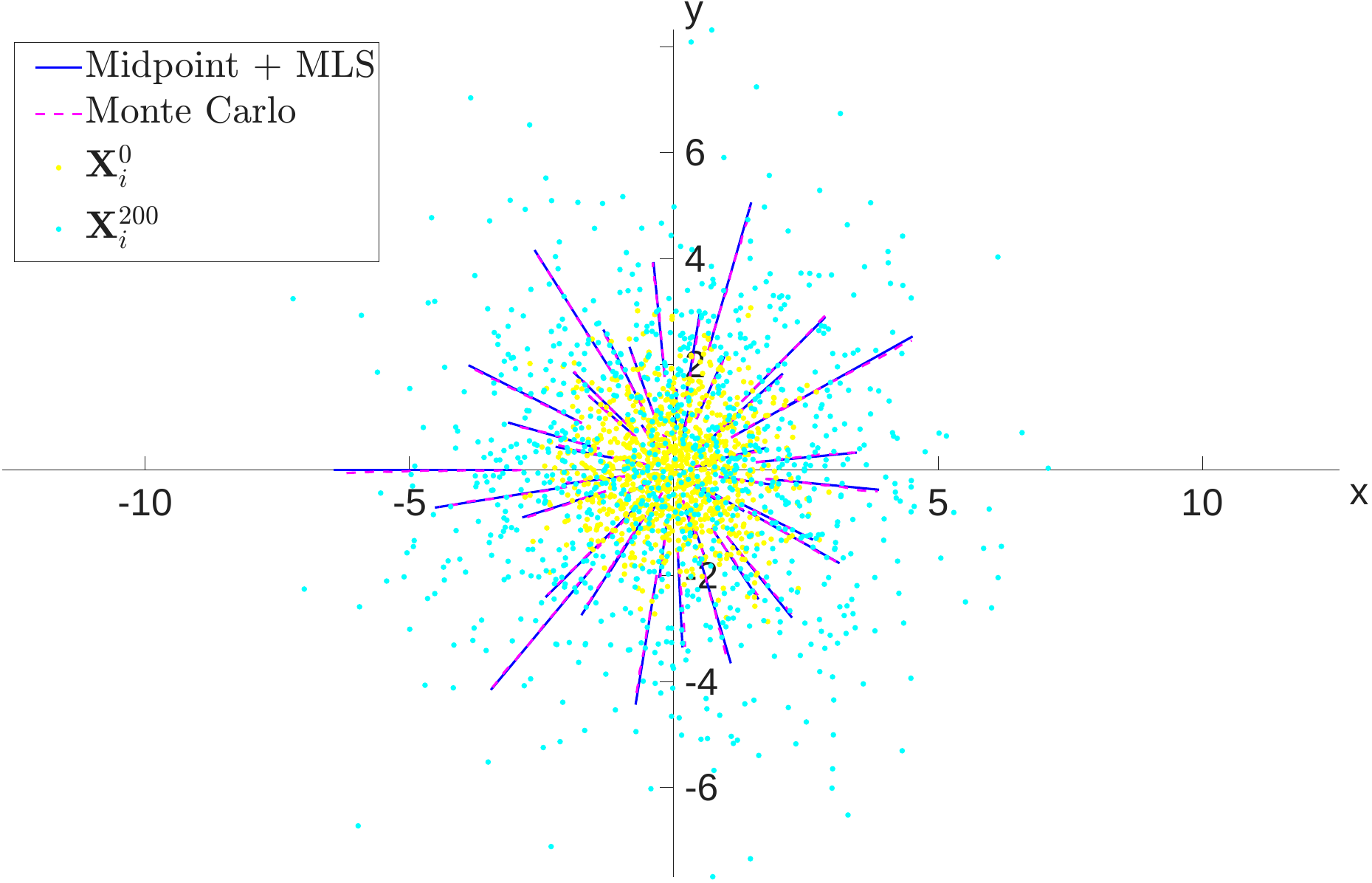}
		\caption{Adaptive radius.}
		\label{fig:DiffusionTestMC2}
	\end{subfigure}
	\caption{The initial distribution of particles \(\mathbf{X}^{0}_i\), final distribution of particles \(\mathbf{X}^{200}_i\), and the osmotic paths obtained by advancing the location of the small regions. To make the figures more readable, we only plot \(10^3\) Monte Carlo particles. For comparison, we plot the paths obtained using the MLS particle method that are also given in Figure \ref{fig:convergenceDiffusionParticlePaths}. }
	\label{fig:DiffusionTestMC}
\end{figure}

\subsection{Advection-diffusion test case}
\label{section:ADTestCase}
We now consider a second scenario in which a Gaussian with \(\mu_0 = [2, 2]^\top\) and \(\sigma_0 = 0.3\) is mapped onto a Gaussian at the origin with standard deviation \(\sigma_{\infty} = 2\). Due to the difference in means of the distributions, we now also expect advection to take place. In this case, the exact solution is given by a Gaussian with mean \(\mu(t)\) and covariance matrix \(\Sigma(t)\) \cite{kampen_stochastic_2007} \begin{align*}
	\mu(t) &= \exp \left(-\frac{t}{\sigma_{\infty}^2}\right) \mu_0, \\
	\Sigma(t) &= \left[ c(t) \sigma_0^2 + \left(1 - c(t)\right) \sigma_{\infty}^2 \right] I_2,
\end{align*} with \(c(t) = \exp \left(-\frac{2t}{\sigma_{\infty}^2}\right)\). The simulations are performed up to time \(t_f = 6.5\).

\subsubsection{MLS}
\label{section:ADMLS}
As in the previous example, we use quasi-randomly distributed points and keep the number of neighbors \(N_b = 16\). The simulation is carried out on grids of progressively finer resolution. Similarly to before, the time step is refined according to \begin{align} 
	\Delta t_{\max}=\frac{2}{\sqrt{N}}=O(h).
\end{align} In addition to reporting the error of the density at the final positions \eqref{eq:L1DensityError}, we also plot the relative \(L_1\) error in the particle positions. \begin{align}
	L_{1,x} = \frac{\sum_{i=1}^N \abs{x_i^n - x_i(t^n)}}{\sum_{i=1}^N \abs{x_i(t^n)}}. \label{eq:L1PositionError}
\end{align} The true final position \(x_i\) can be computed by integrating the characteristic equation \eqref{eq:characteristic}, using the exact solution of the density, with a higher-order Runge-Kutta method. The convergence graph is given in Figure \ref{fig:convergenceAD}. The methods again achieve first-order and second-order for the density and \(x\)-coordinate. The error in the \(y\)-coordinate is omitted because it behaves identically to the error in the \(x\)-coordinate. In Figure \ref{fig:AdvectionDiffusionPathsMLS}, the initial distribution \(\rho_0\) and final distribution \(\rho_{\infty}\) are plotted, as well as several particle paths. We plot both the paths of the particles during simulations, and the paths obtained by integrating the characteristic equation \eqref{eq:characteristic} with the exact solution. The midpoint method is also visually more accurate.
\begin{figure}
	\centering 
	\begin{subfigure}[b]{0.49\textwidth}
		\centering
		\includegraphics[width=0.99\linewidth]{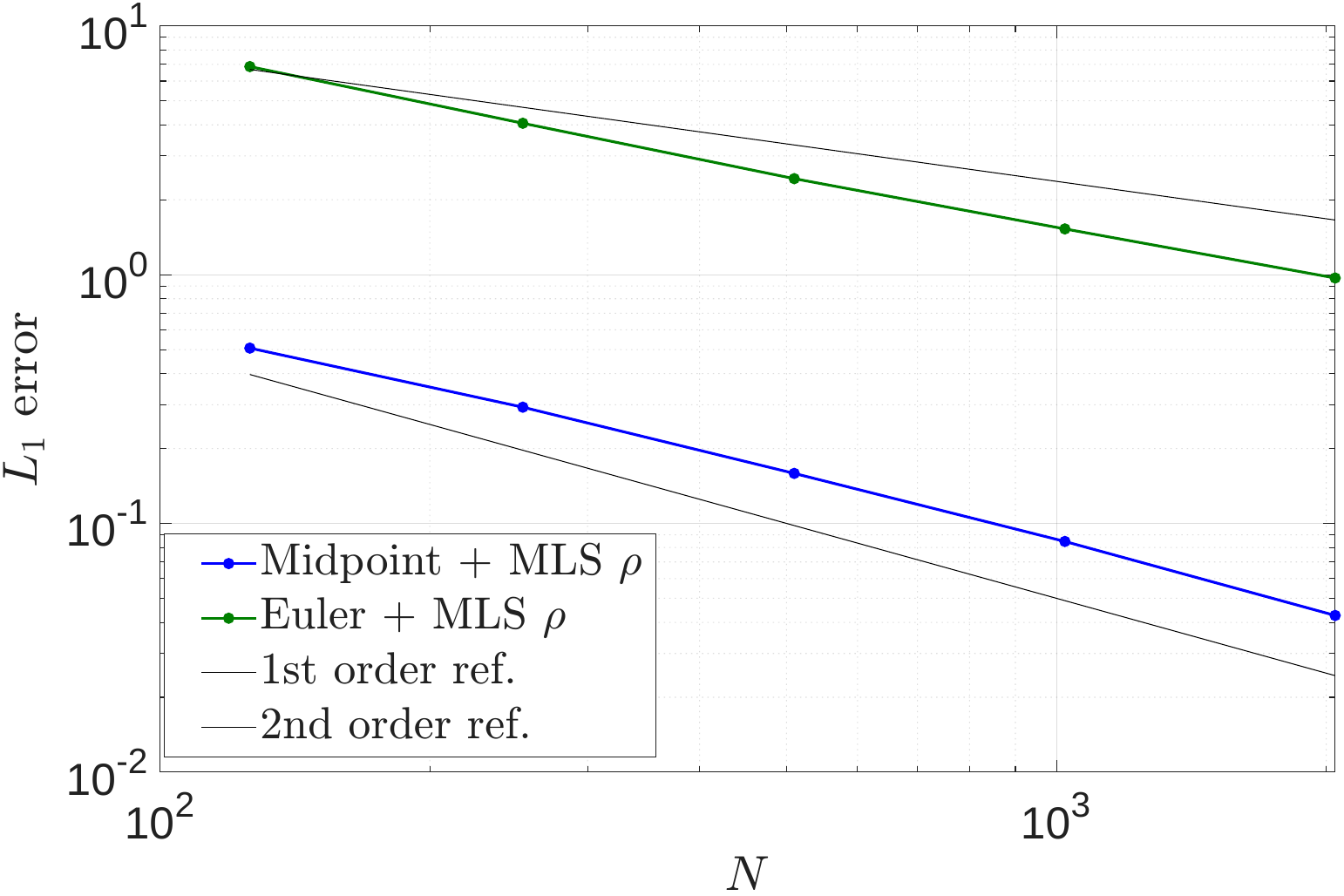}
		\caption{}
	\end{subfigure}
	\begin{subfigure}[b]{0.49\textwidth}
		\centering
		\includegraphics[width=0.99\linewidth]{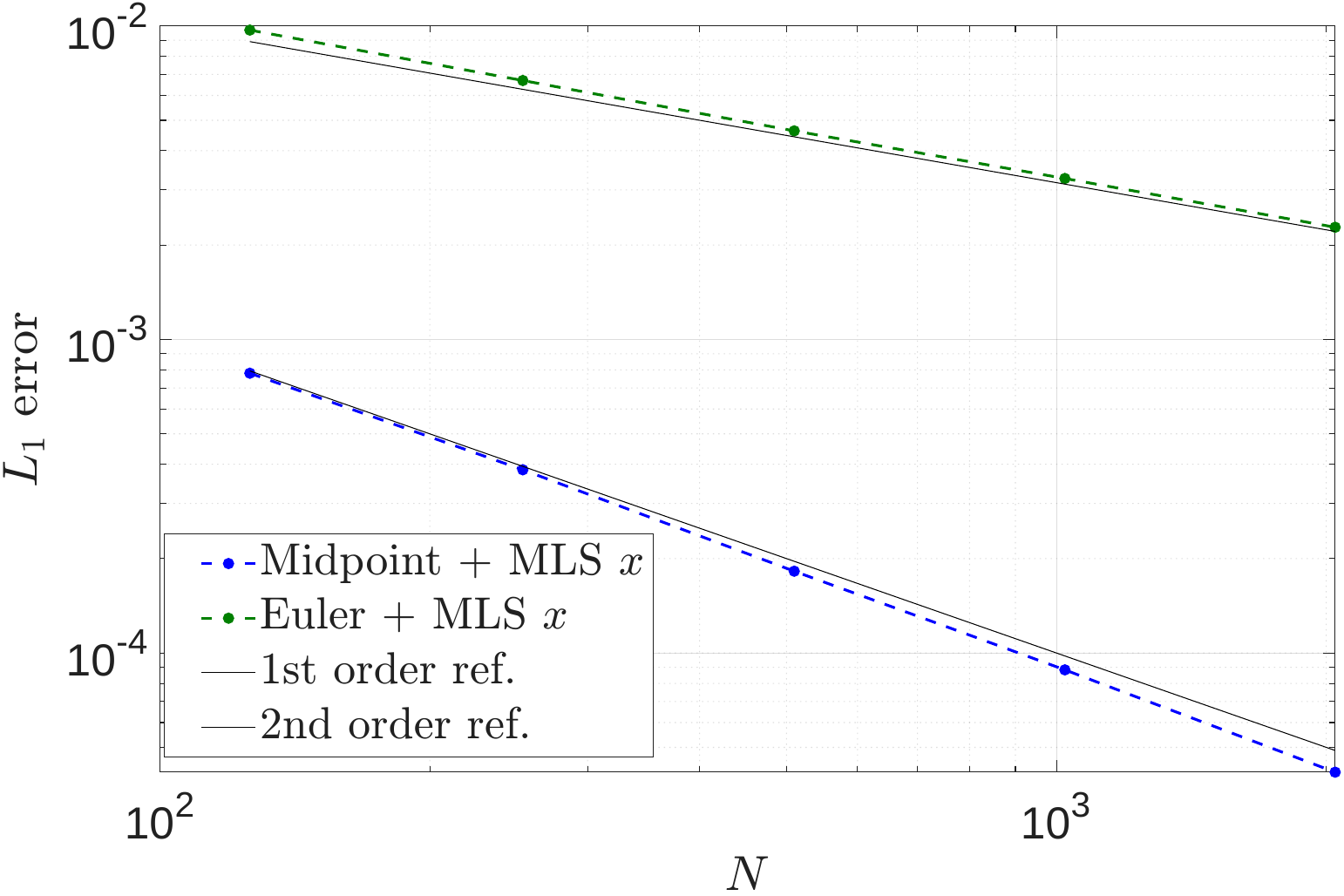}
		\caption{}
	\end{subfigure}
	\caption{Convergence for the density (left) and final position (right) of the particles for the MLS scheme.}
	\label{fig:convergenceAD}
\end{figure}

\subsubsection{Stochastic method}
\label{section:ADMC}
For comparison, we repeat the test case with the Monte Carlo particle method. Ten million Monte Carlo particles are sampled from the initial condition and advanced through time with time step \(\Delta t = 1\times10^{-2}\). We place several small regions with initial radius \(r_{\mathrm{init}} = 0.1\) at the same initial locations as in the MLS simulation, see Figure \ref{fig:AdvectionDiffusionPathsMLS}. Throughout the simulation, the radii are adapted to balance the bias and Monte Carlo error. The results are plotted in Figure \ref{fig:AdvectionDiffusionPathsMC}. We observe good agreement between the paths generated by the Monte Carlo method and those computed by integrating the osmotic velocity with an expression for the exact density distribution. 
\begin{figure}
	\centering 
	\begin{subfigure}[b]{0.49\textwidth}
		\centering
		\includegraphics[width=\linewidth]{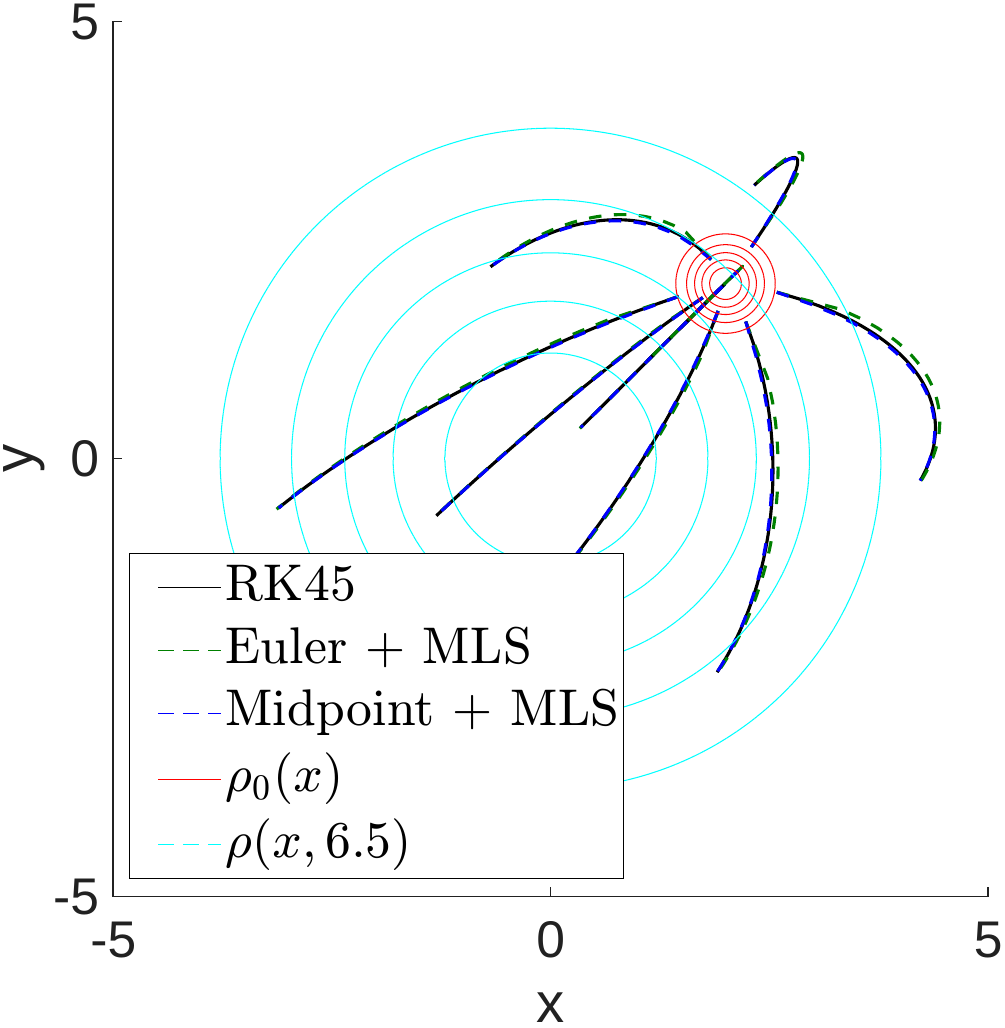}
		\caption{The trajectories of several MLS particles for \(N = 254\).}
		\label{fig:AdvectionDiffusionPathsMLS}
	\end{subfigure}	
	\begin{subfigure}[b]{0.49\textwidth}
		\centering
		\includegraphics[width=\linewidth]{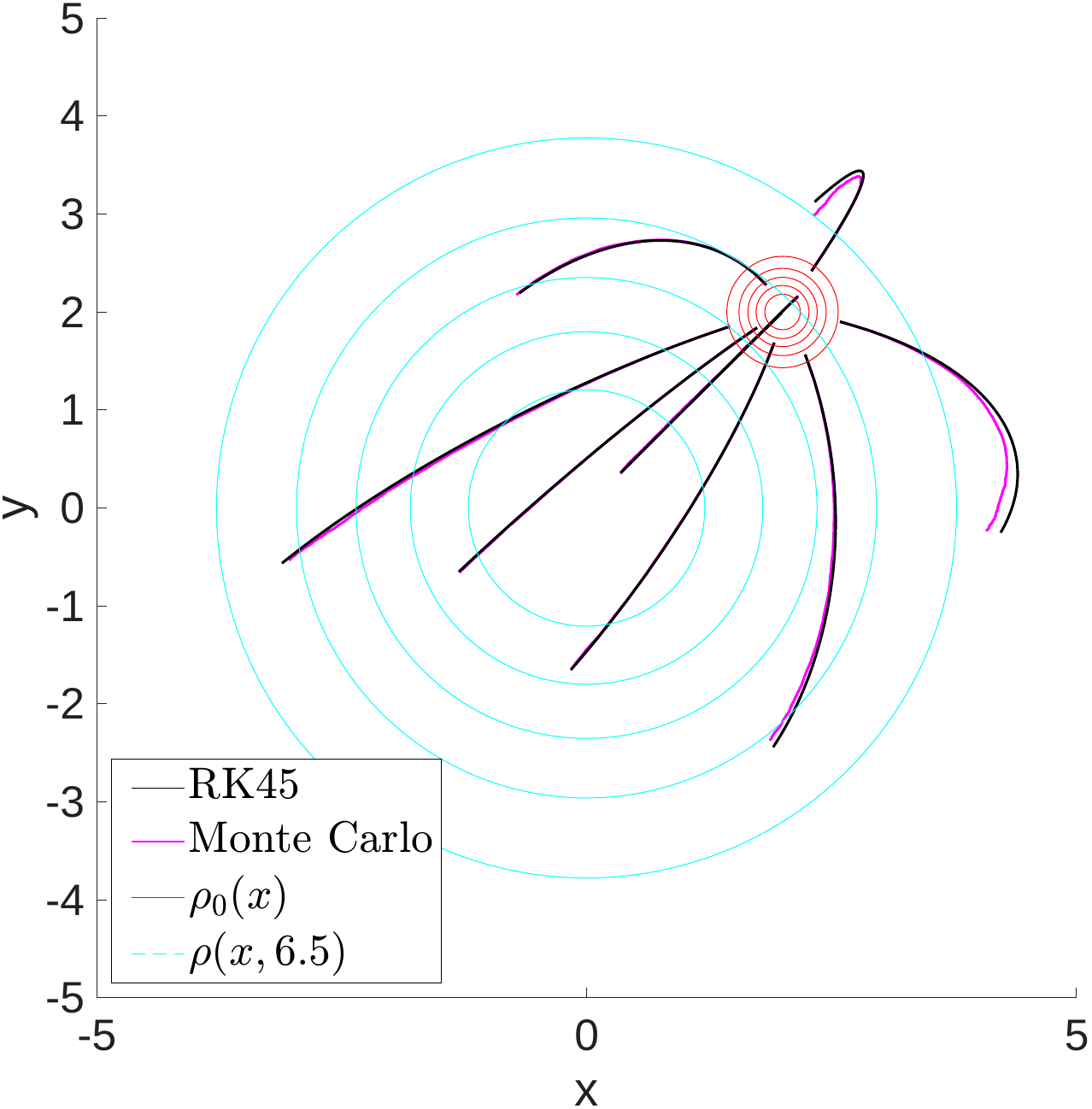}
		\caption{The trajectories generated by the Monte Carlo method.}
		\label{fig:AdvectionDiffusionPathsMC}
	\end{subfigure}
	\caption{Several trajectories generated by the MLS particle method (left) and Monte Carlo particle method (right).}
	\label{fig:pathsAD}
\end{figure}

\subsection{Transport of a Gaussian across a cylinder}
\label{section:gaussianAcrossCylinder}

\subsubsection{Problem setup}
\label{section:gaussianAcrossCylinderSetup}
We consider a test case from \cite{iollo_point-set_2025} in which a Gaussian is transported to another Gaussian with a circular obstacle between them. The initial and final distributions are given by \begin{align*}
	\rho_0 &= \mathcal{N}(\mu_0, \sigma_0^2 I_2) \quad \mu_0 = \left[-2, 0\right]^\top, \; \sigma_0^2 = 0.1, \\
	\rho_\infty &= \mathcal{N}(\mu_\infty, \sigma_0^2 I_2) \quad \mu_\infty = \left[2, 0\right]^\top. 
\end{align*} The circular obstacle has radius \(R = 0.5\) and is placed at the origin, \(\mathbf{X}_o = \left[0, 0\right]^\top\). The introduction of the obstacle allows us to test how the MLS and Monte Carlo methods handle boundary conditions. Moreover, it provides motivation for using the FPE for point set registration, as solving the FPE on a bounded domain is simpler than addressing optimal transport problems in the presence of obstacles. We perform the simulation with the MLS particle method and the stochastic particle method up to the final time \(t_{\rm f} = 0.7\). 

\subsubsection{MLS}
\label{section:gaussianAcrossCylinderMLS}
The initial distribution of particles is sampled using quasi-random numbers from the initial distribution \(\rho_0\). The simulation is performed with time step \(\Delta t_{\max} = 3 \times 10^{-4}\). We use \(N = 254\) particles. Each particle is assigned \(N_b = 20\) neighbors in every time step. The simulation is performed with the IMEX midpoint method, as it is slightly more accurate than the IMEX Euler method. During the simulation, the adaptive time-stepping criterion reduces the time step when the particle cloud becomes irregular upon interacting with the boundary. The distribution of the particles and the ghost particles at several moments in time are plotted in Figure \ref{fig:example1MLS1}. We observe that the particle distribution splits into two groups, one of which moves along the top of the obstacle, the other along the bottom. As the particle cloud hits the boundary of the obstacle, we observe a significant increase in density. The ghost point method correctly enforces the reflective boundary condition.
Behind the obstacle, the groups merge into the final distribution. While the particles were initially distributed according to $\rho_0$,  at the final time the particle distribution is slightly distorted. 
We note that the small asymmetries between the upper and lower halves of the distribution are due to the initial asymmetric particle distribution.

Some particle paths are plotted in Figure \ref{fig:pathsExample1a}. We observe a flow of particles around the boundary of the obstacle. One particle that approaches the obstacle boundary at a perpendicular angle, somewhat abruptly turns up to avoid the boundary. 

\begin{figure}
	\centering 
	\begin{subfigure}[b]{0.49\textwidth}
		\centering
		\includegraphics[width=\linewidth]{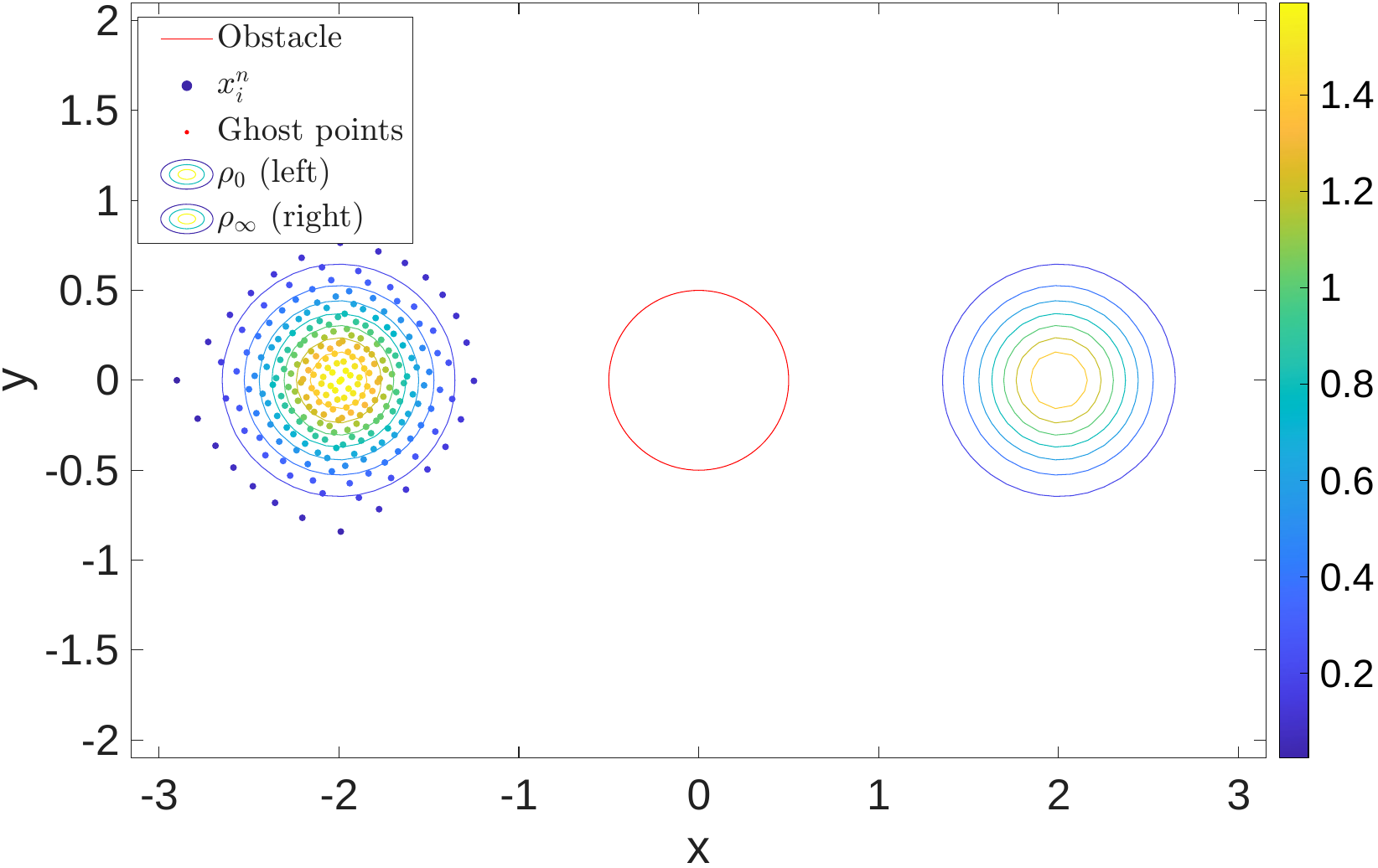}
		\caption{\(t = 0.0\).}
		\label{fig:example1a}
	\end{subfigure}
	\begin{subfigure}[b]{0.49\textwidth}
		\centering
		\includegraphics[width=\linewidth]{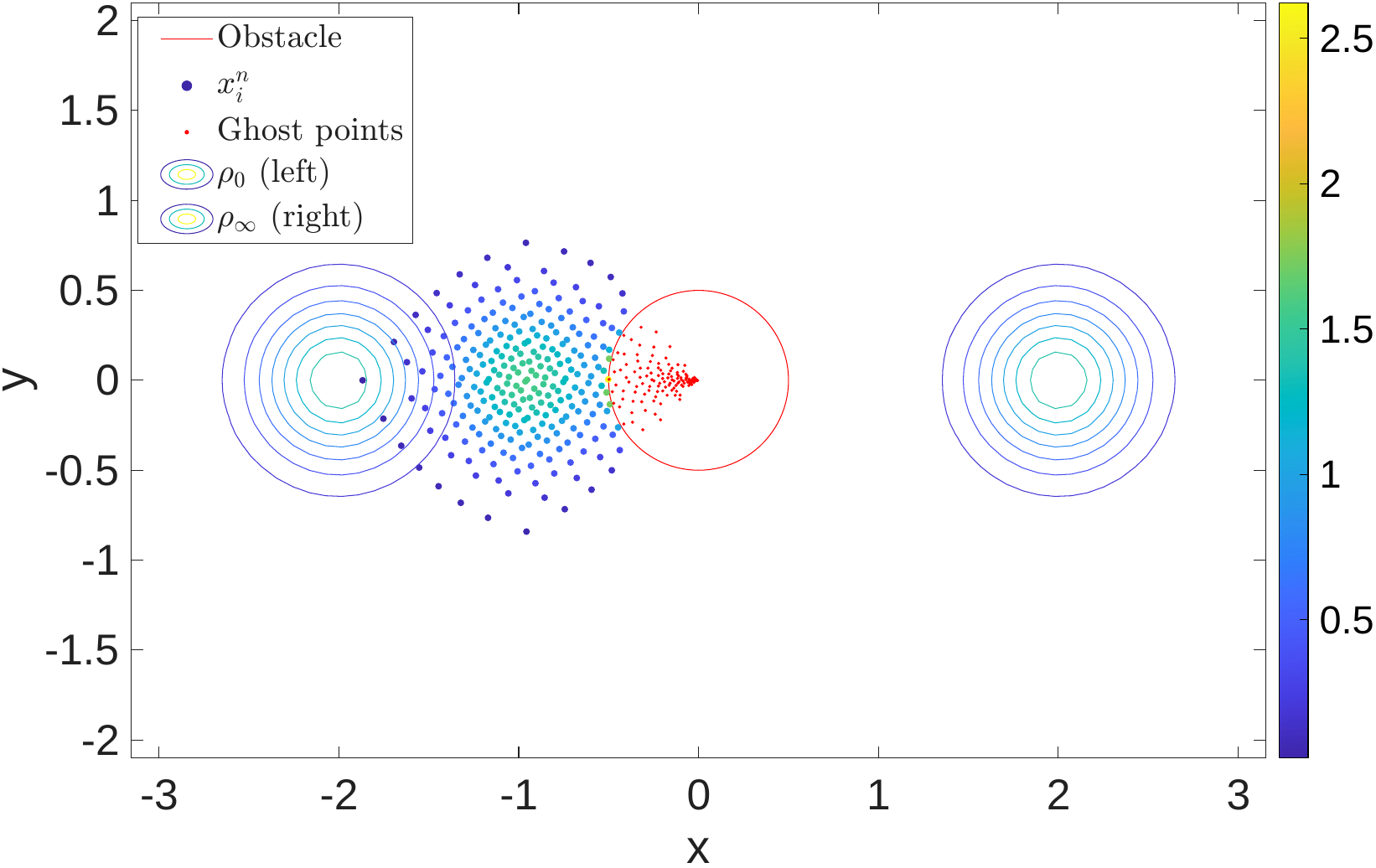}
		\caption{\(t = 3.0 \times 10^{-2}\).}
		\label{fig:example1b}
	\end{subfigure}	
	\begin{subfigure}[b]{0.49\textwidth}
		\centering
		\includegraphics[width=\linewidth]{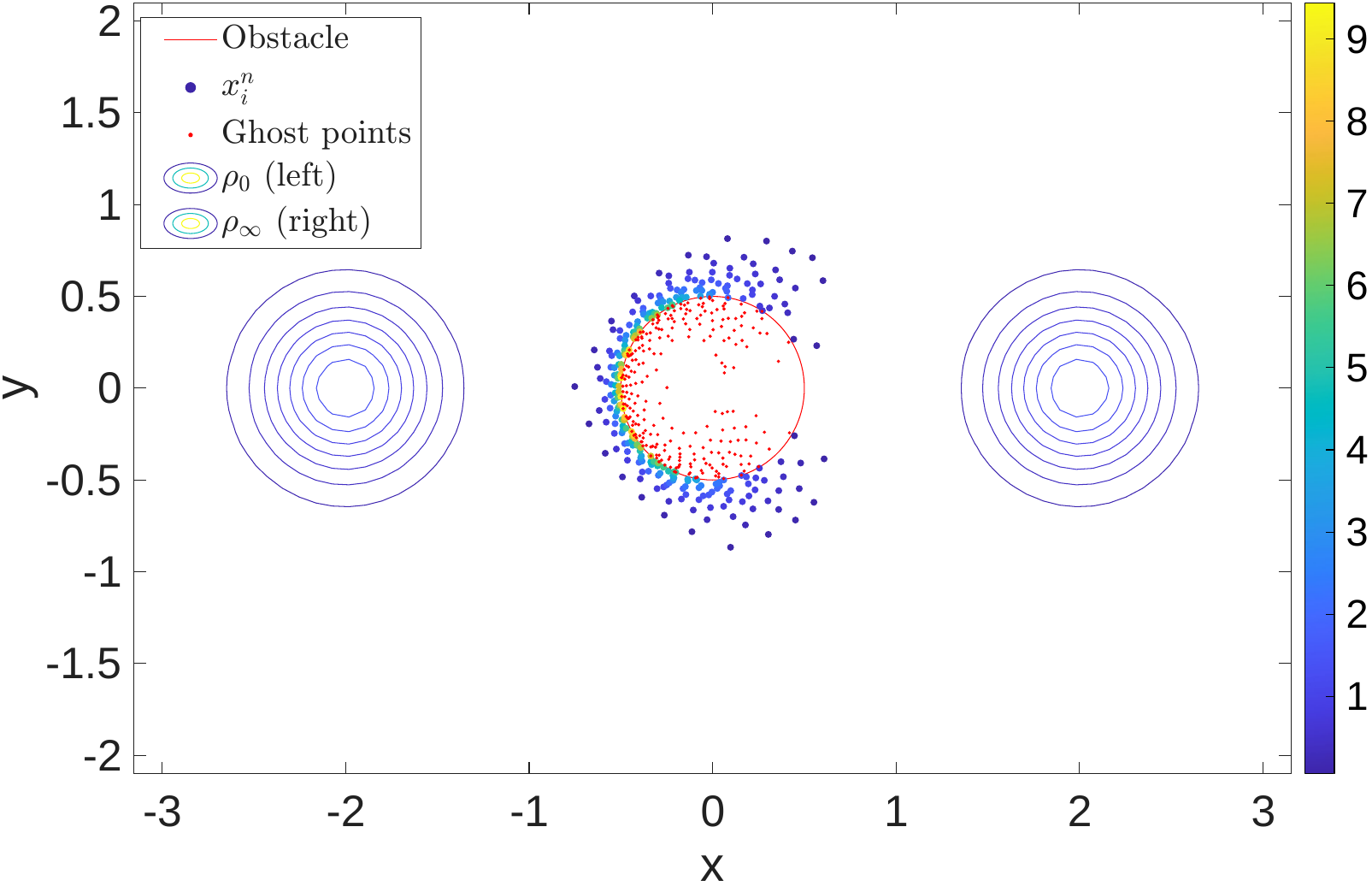}
		\caption{\(t = 7.5 \times 10^{-2}\).}
		\label{fig:example1c}
	\end{subfigure}
	\begin{subfigure}[b]{0.49\textwidth}
		\centering
		\includegraphics[width=\linewidth]{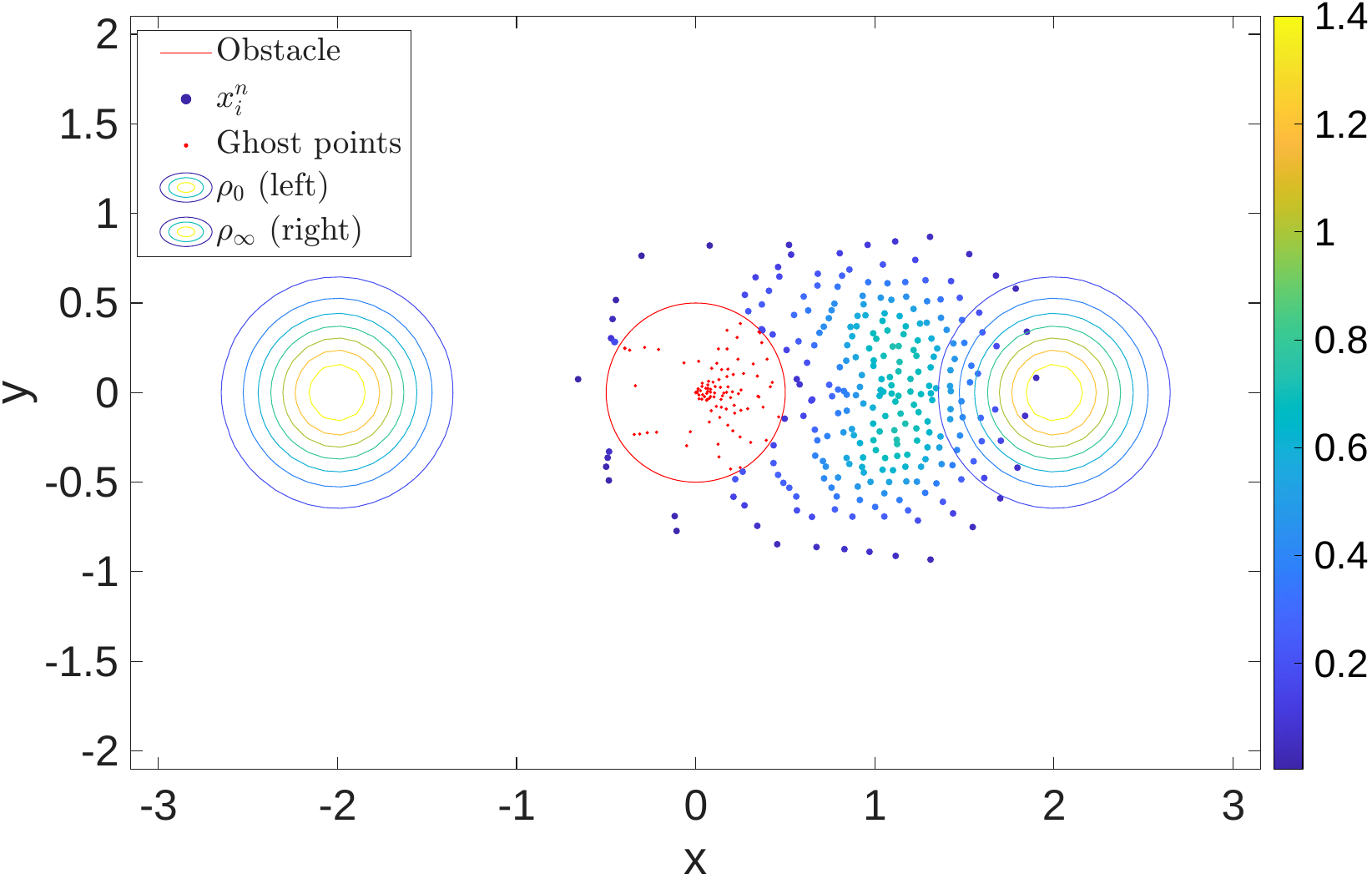}
		\caption{\(t = 1.65 \times 10^{-1}\).}
		\label{fig:example1d}
	\end{subfigure}
	\begin{subfigure}[b]{0.49\textwidth}
		\centering
		\includegraphics[width=\linewidth]{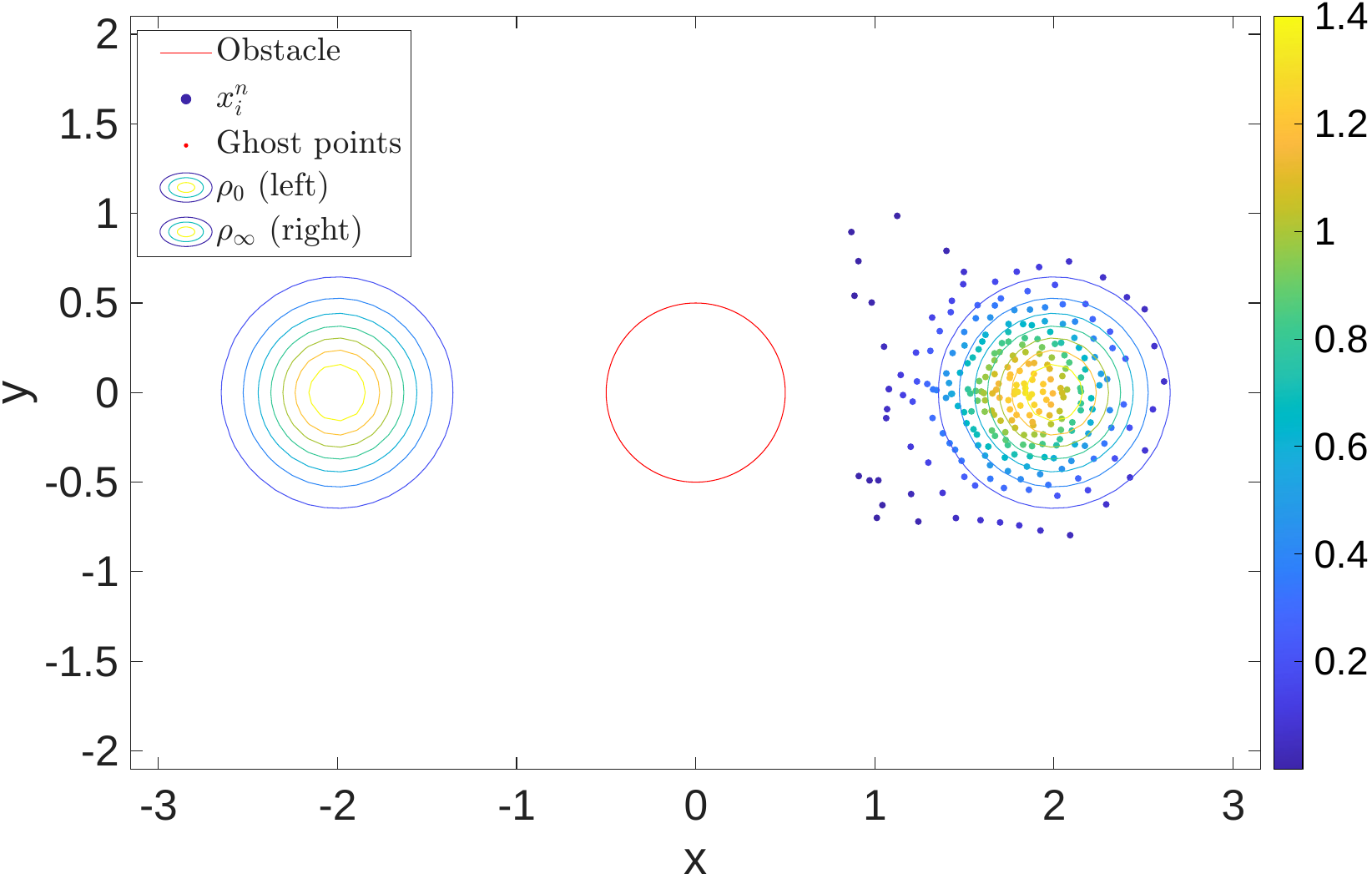}
		\caption{\(t = 3.45 \times 10^{-1}\).}
		\label{fig:example1e}
	\end{subfigure}
	\begin{subfigure}[b]{0.49\textwidth}
		\centering
		\includegraphics[width=\linewidth]{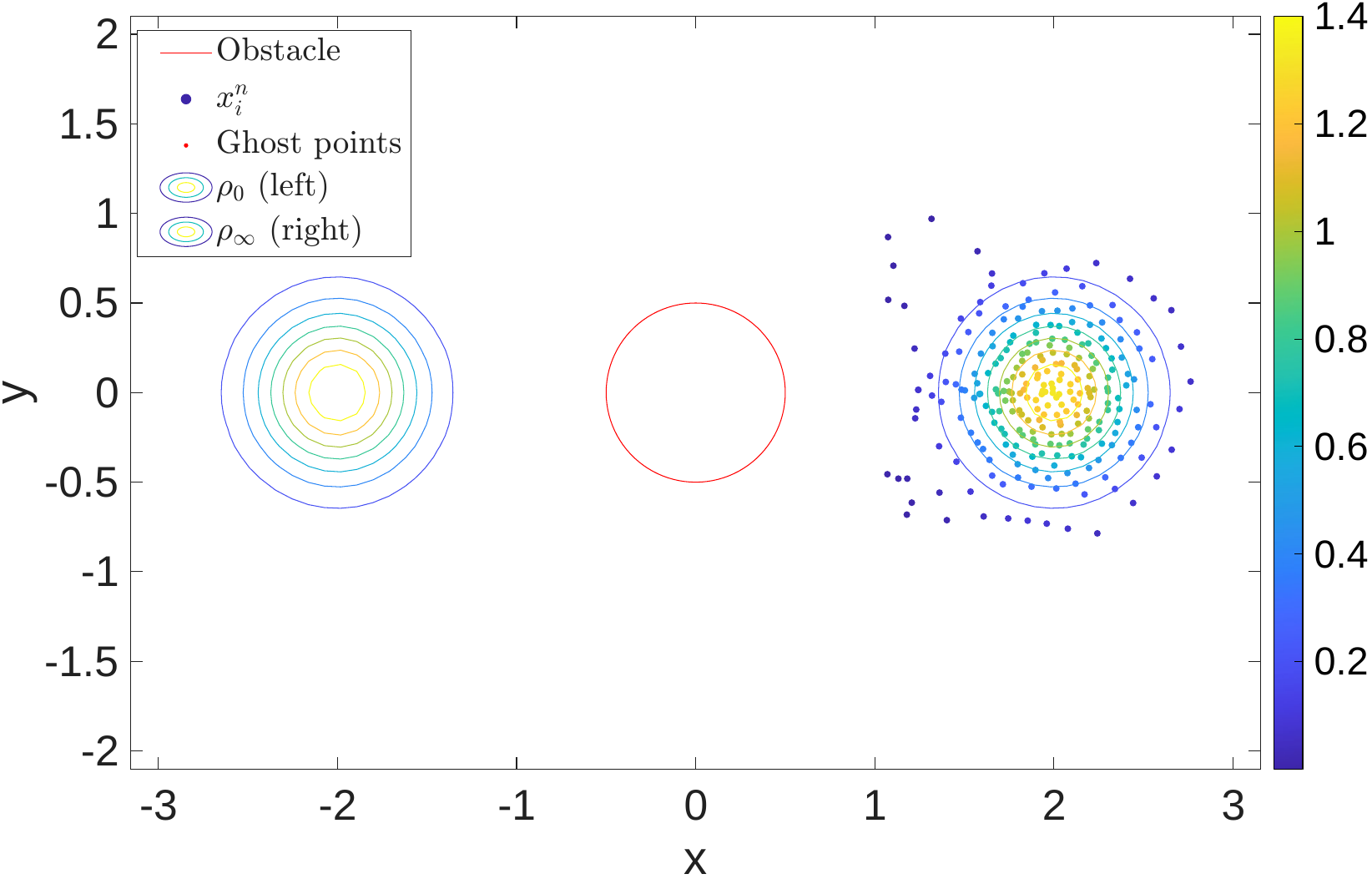}
		\caption{\(t = 7 \times 10^{-1}\).}
		\label{fig:example1f}
	\end{subfigure}
	\caption{Mapping of a Gaussian onto another Gaussian across an obstacle using the MLS method. We plot the initial distribution \(\rho_0\), the target distribution \(\rho_{\infty}\), the cloud of particles, and the ghost points.}
	\label{fig:example1MLS1}
\end{figure}

\subsubsection{Stochastic method}
\label{section:gaussianAcrossCylinderMC}
The simulation uses \(40 \times 10^6\) particles. Because steep particle-density gradients develop near the obstacle, we use a relatively small time step of \(\Delta t = 5 \times 10^{-3}\). The radii of the small regions are initialized as \(r_{\mathrm{init}} = 6 \times 10^{-2}\) and subsequently updated using the adaptive-radius procedure described in Section \ref{section:MCAdaptiveRadius}. Figure \ref{fig:example1MLS1MC} shows the Monte Carlo particle distribution at several times. The particle cloud encounters the obstacle, stretches around it, and ultimately detaches from the boundary before converging to the target distribution. The corresponding osmotic paths are shown in Figure \ref{fig:pathsExample1b}. Near the obstacle, we observe discontinuities in the osmotic paths as the small regions hit the obstacle. This is due to the high density of particles around the obstacle. This very large density gradient, computed through the mean shift vector, briefly overcomes the drift to the target distribution and pushes the small regions back. For some small regions, such as the one that hits the obstacle at a perpendicular angle, this process repeats several times.

\begin{figure}
	\centering 
	\begin{subfigure}[b]{0.49\textwidth}
		\centering
		\includegraphics[width=\linewidth]{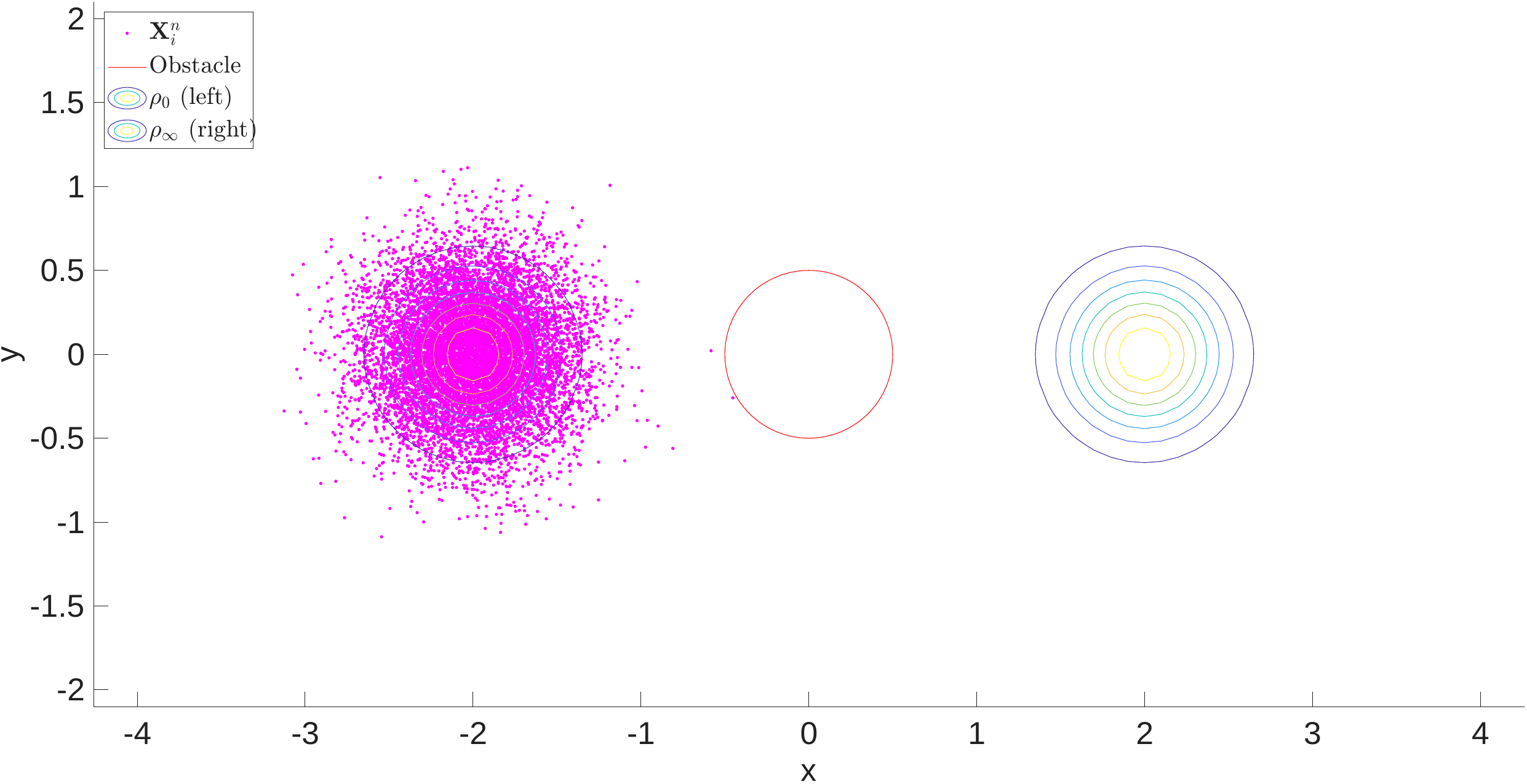}
		\caption{\(t = 0.0\).}
		\label{fig:example1aMC}
	\end{subfigure}
	\begin{subfigure}[b]{0.49\textwidth}
		\centering
		\includegraphics[width=\linewidth]{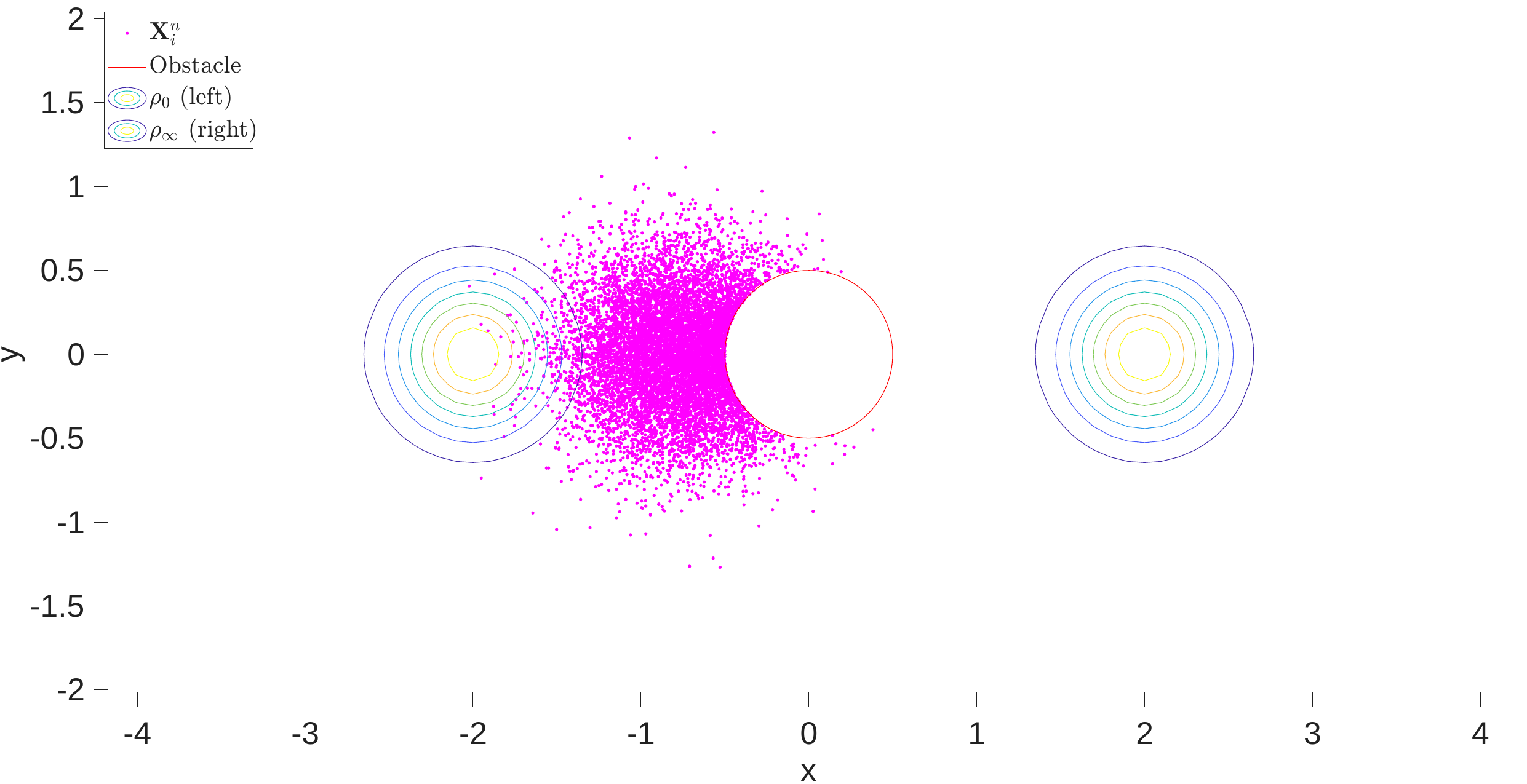}
		\caption{\(t = 3.0 \times 10^{-2}\).}
		\label{fig:example1bMC}
	\end{subfigure}	
	\begin{subfigure}[b]{0.49\textwidth}
		\centering
		\includegraphics[width=\linewidth]{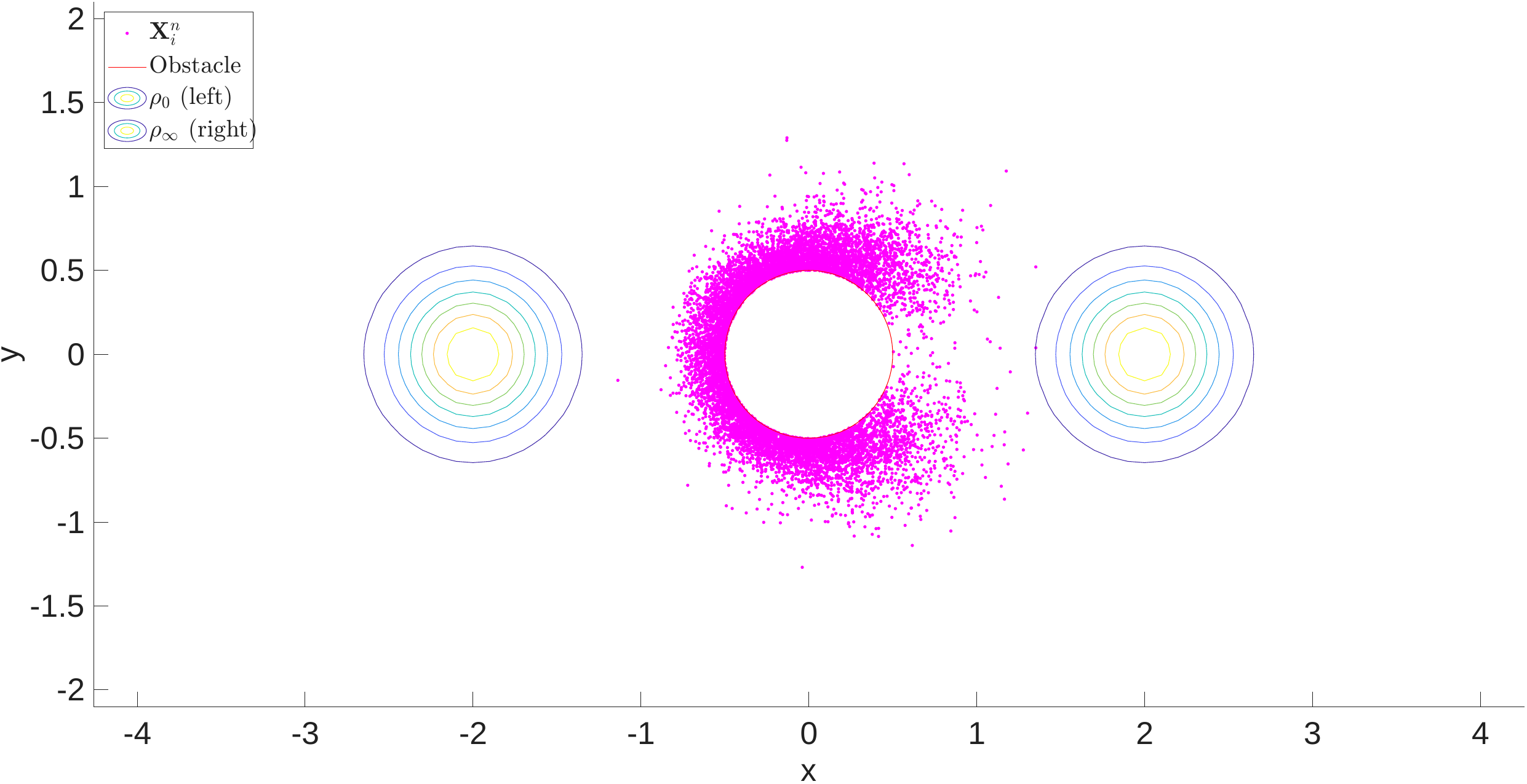}
		\caption{\(t = 7.5 \times 10^{-2}\).}
		\label{fig:example1cMC}
	\end{subfigure}
	\begin{subfigure}[b]{0.49\textwidth}
		\centering
		\includegraphics[width=\linewidth]{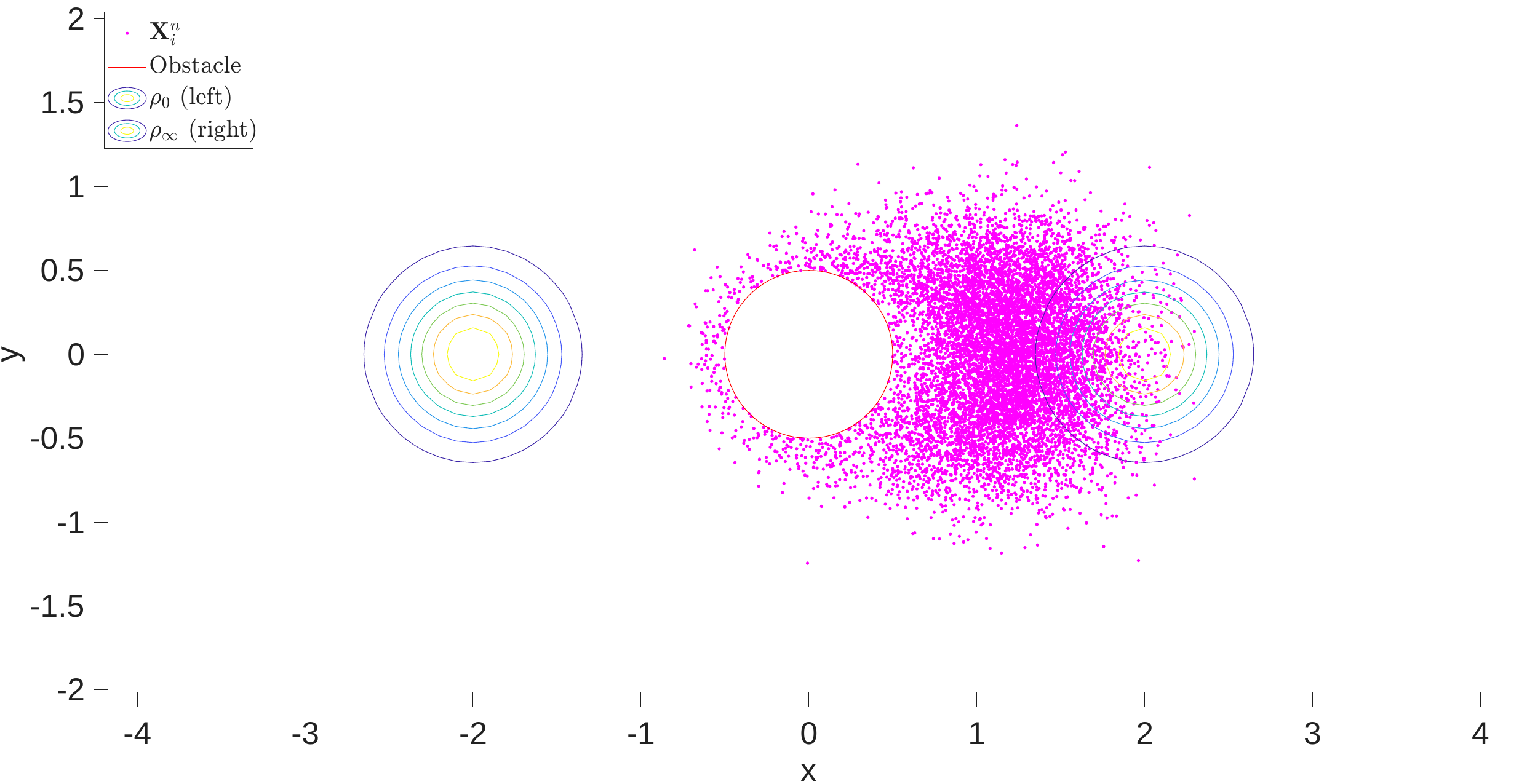}
		\caption{\(t = 1.65 \times 10^{-1}\).}
		\label{fig:example1dMC}
	\end{subfigure}
	\begin{subfigure}[b]{0.49\textwidth}
		\centering
		\includegraphics[width=\linewidth]{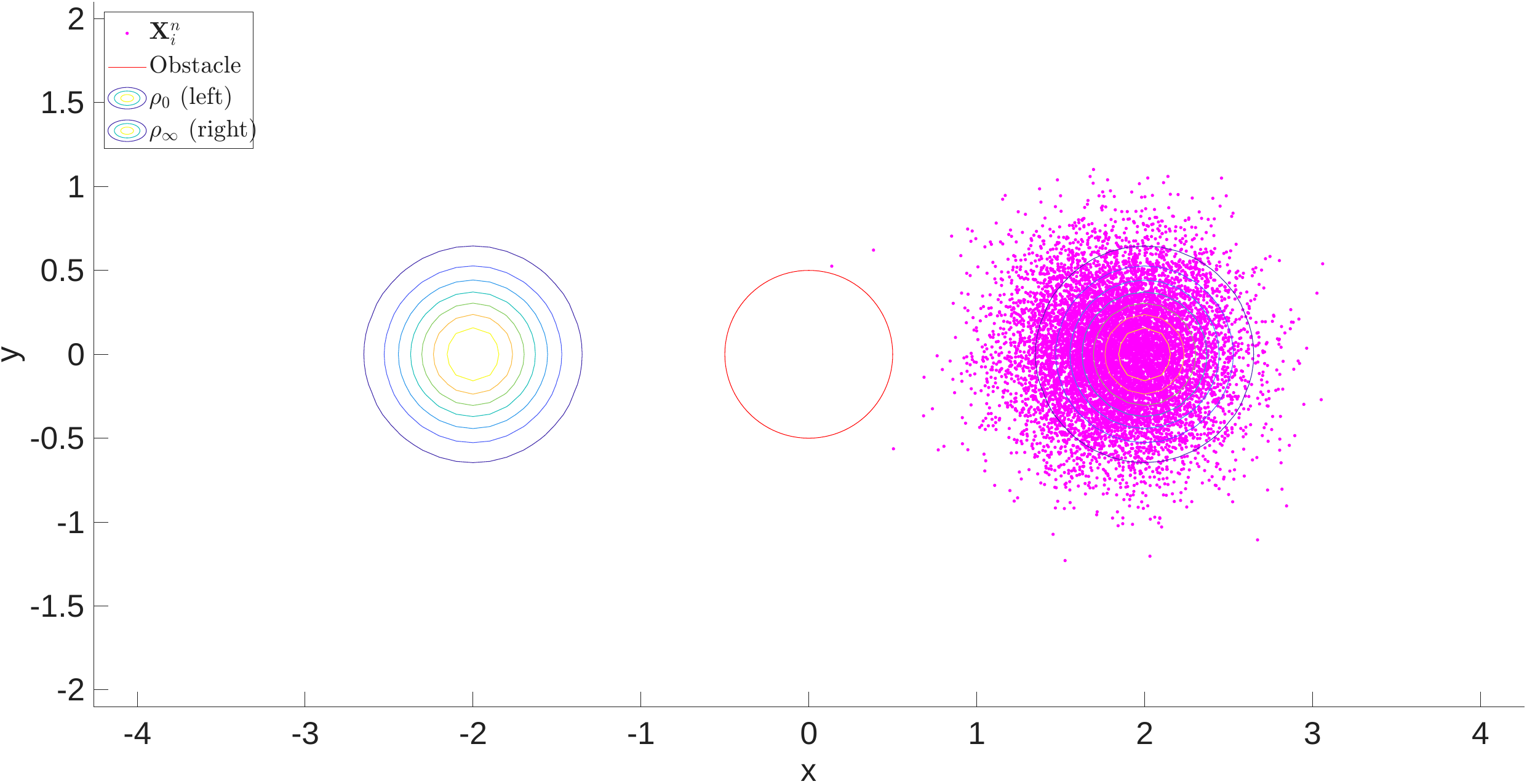}
		\caption{\(t = 3.45 \times 10^{-1}\).}
		\label{fig:example1eMC}
	\end{subfigure}
	\begin{subfigure}[b]{0.49\textwidth}
		\centering
		\includegraphics[width=\linewidth]{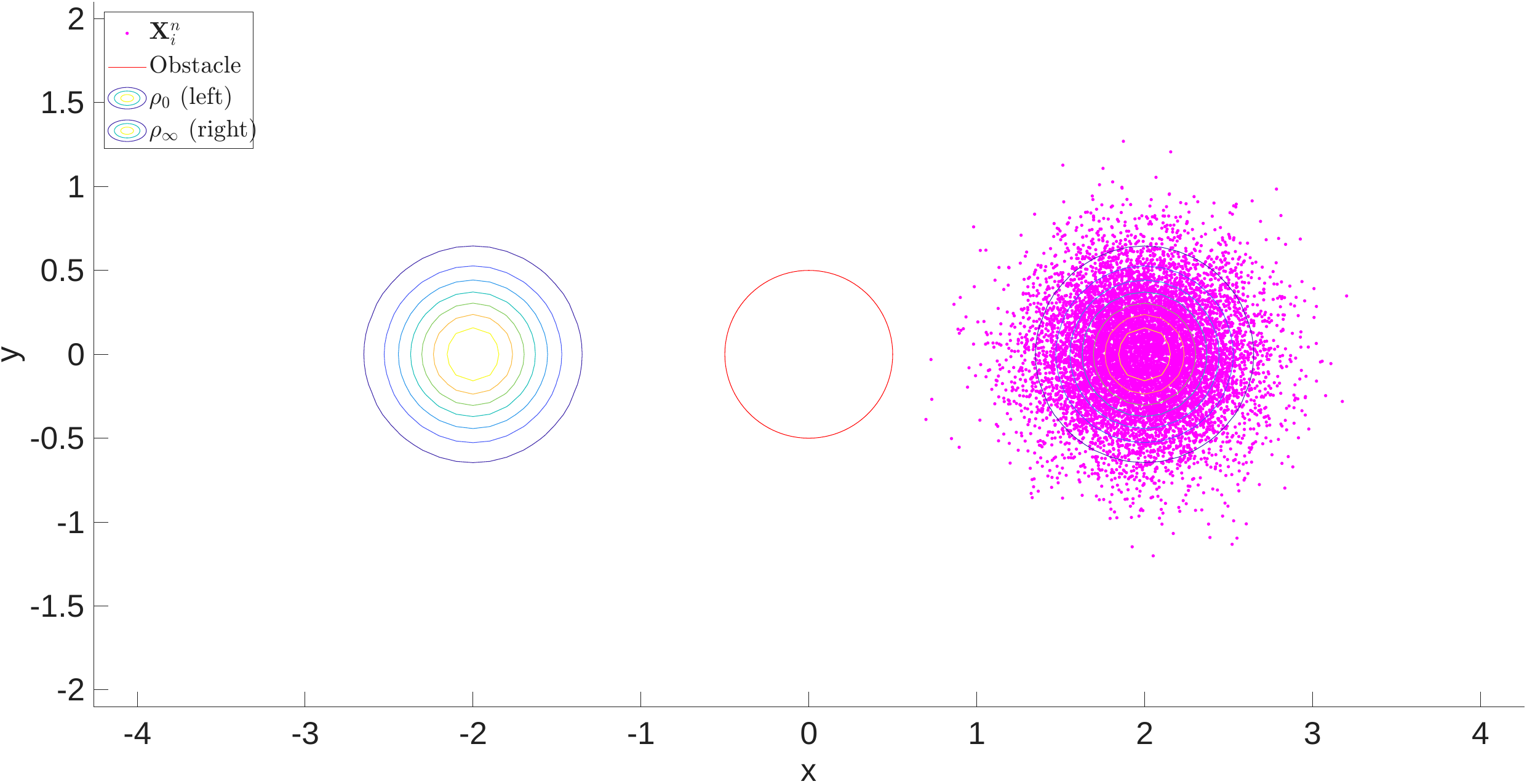}
		\caption{\(t = 7 \times 10^{-1}\).}
		\label{fig:example1fMC}
	\end{subfigure}
	\caption{Mapping of a Gaussian onto another Gaussian across an obstacle using the Monte Carlo method. We plot the initial distribution \(\rho_0\), the target distribution \(\rho_{\infty}\), and the cloud of Monte Carlo particles.}
	\label{fig:example1MLS1MC}
\end{figure}

\begin{figure}
	\centering 
	\begin{subfigure}[b]{0.49\textwidth}
		\centering
		\includegraphics[width=\linewidth]{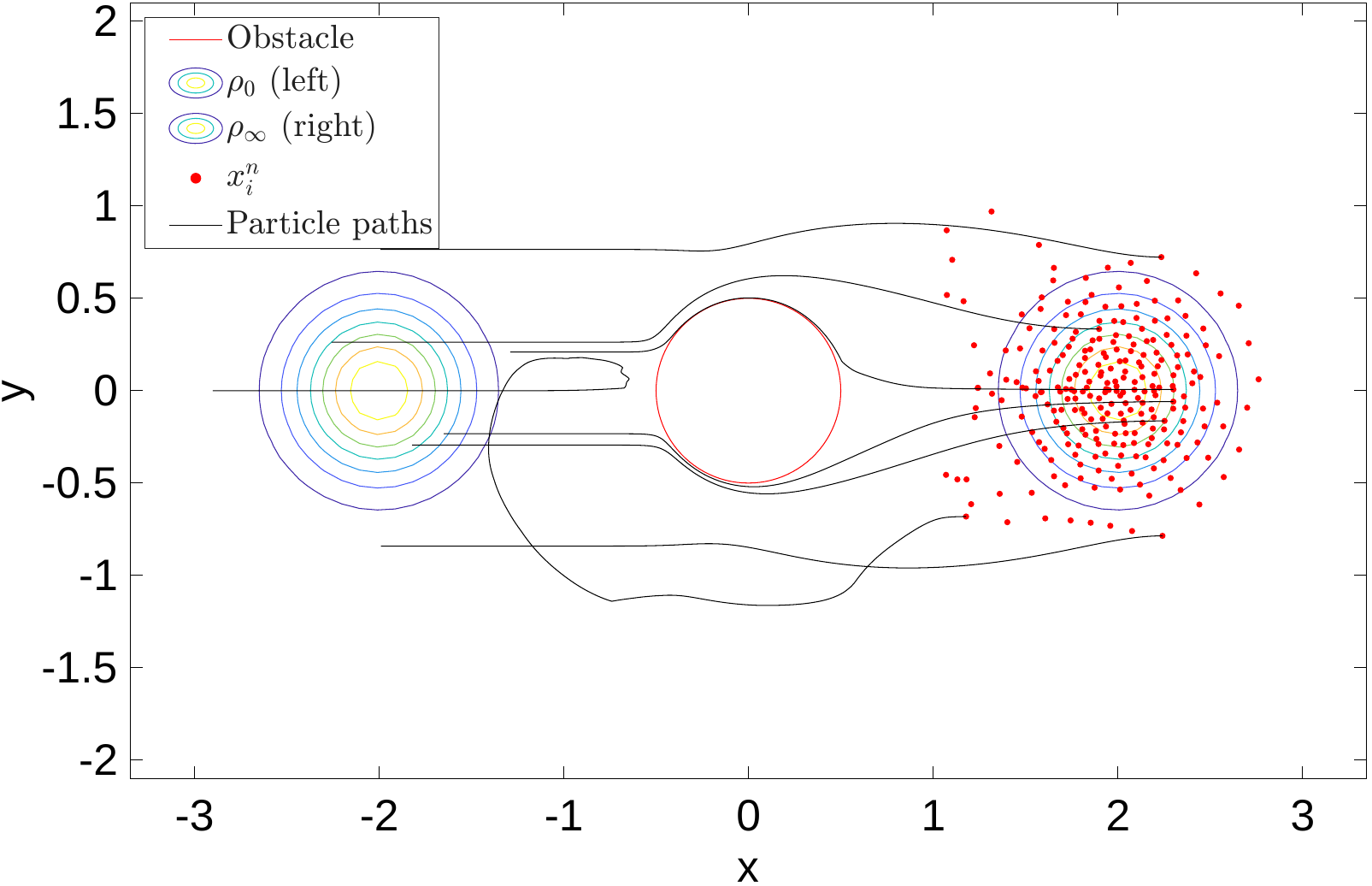}
		\caption{MLS}
		\label{fig:pathsExample1a}
	\end{subfigure}
	\begin{subfigure}[b]{0.49\textwidth}
		\centering
		\includegraphics[width=\linewidth]{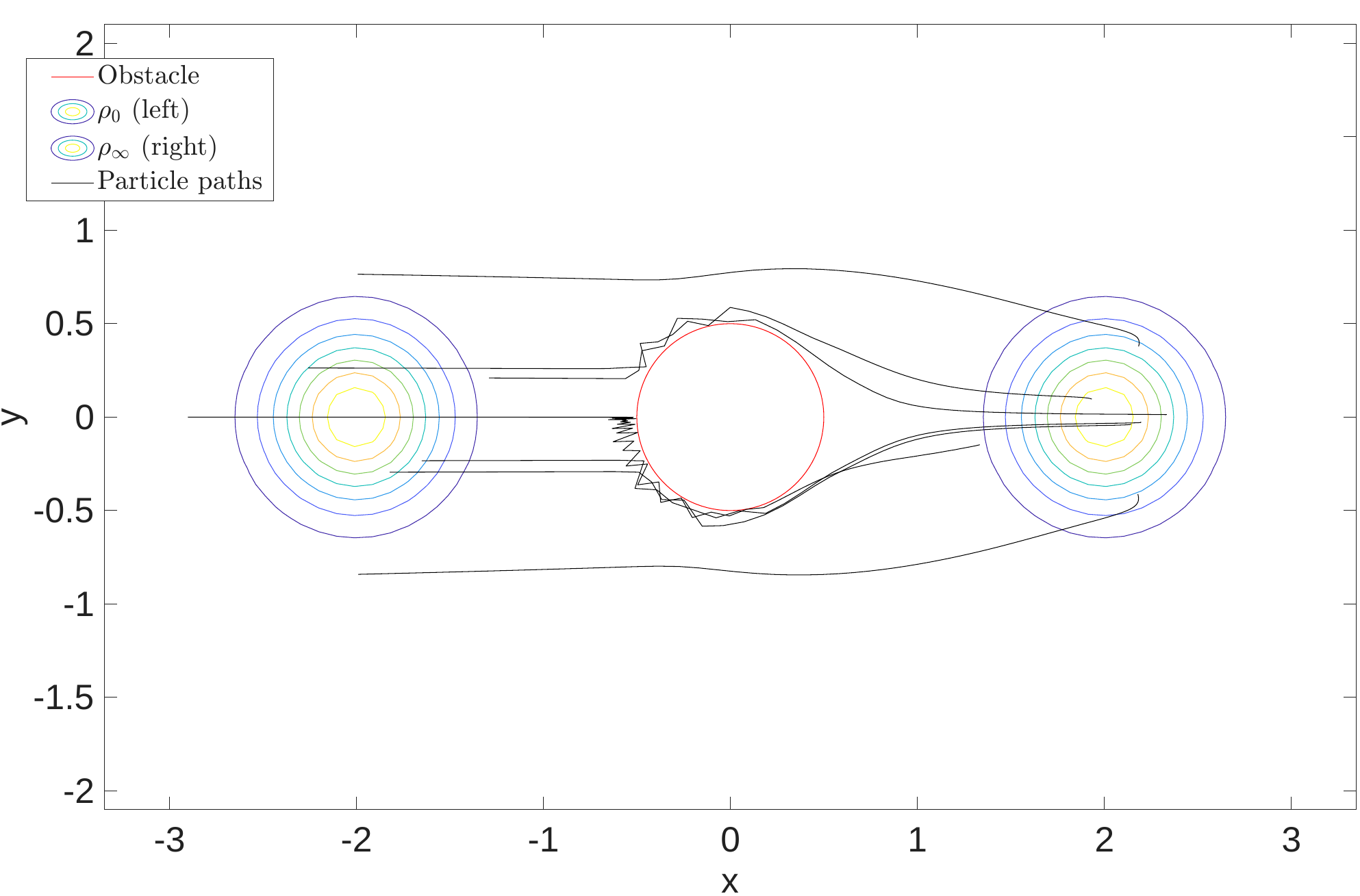}
		\caption{Monte Carlo}
		\label{fig:pathsExample1b}
	\end{subfigure}	
	\caption{Some osmotic paths for the transport of the Gaussian across a cylinder test case generated by the MLS method and the Monte Carlo method.}
	\label{fig:Example1Paths}
\end{figure}

\subsection{Point set registration across a cylinder}
\label{section:PSRAcrossCylinder}
We consider the point set registration problem across a cylinder from \cite{iollo_point-set_2025}. The goal is to map an initial point cloud \(\{X_i^0\}_{i=1}^N\) onto a target point cloud \(\{X_j^\infty\}_{j=1}^M\) and obtain the trajectories of the mapping. We define the point clouds as \begin{align}
	X_i^0 &= \left[\cos \left( \theta_0 + \Delta \theta \frac{i-1}{N-1} \right) - 1, \sin \left( \theta_0 + \Delta \theta \frac{i-1}{N-1} \right)\right]^\top + 0.1 \Tilde{U}_i^0, \quad i = 1 \dots N, \label{eq:PSRInitPos} \\
	X_j^\infty &= \left[\cos \left( \theta_\infty + \Delta \theta \frac{j-1}{M-1} \right) + 1, \sin \left( \theta_\infty + \Delta \theta \frac{j-1}{M-1} \right)\right]^\top + 0.1 \Tilde{U}_j^\infty, \quad j = 1 \dots M,
\end{align} where \(N = M = 400\), \(\theta_0 = \frac{\pi}{2}\), \(\theta_\infty = \frac{3\pi}{2}\), \(\Delta \theta = \pi\) and \(\Tilde{U}_i^0, \Tilde{U}_j^\infty \overset{\mathrm{iid}}{\sim} \text{Uniform}\left((0, 1)^2\right) \). The initial and target point clouds are the left and right halves of a circle separated by a distance of 2. The initial and target point clouds are each approximated using a Gaussian mixture model and kernel density estimation with a Gaussian kernel, see Appendix \ref{appendix:DensityEstimation}. The weights, means and covariance matrices of the GMMs are given in Appendix \ref{appendix:PSRGMM}. The bandwidths of the initial and target distributions obtained using KDE are \(\lambda_{0} = \lambda_{\infty} = 0.3017\). At the origin, we place a cylindrical obstacle with radius \(R = 0.5\). 

The two density-estimation techniques yield distinct approximations of the initial and target densities. We apply the midpoint MLS method to both approximations to assess the influence of the density-estimation technique. For a qualitative comparison between the two particle methods under the same density model, we additionally apply the Monte Carlo method using the GMM approximations. All simulations are performed up to the final time \(t_{\rm f} = 1.2\). 

\subsubsection{MLS}
\label{section:PSRAcrossCylinderMLS}
The initial positions of the MLS particles are set to the point cloud \(\{X_i^0\}_{i=1}^N\) given by Equation \eqref{eq:PSRInitPos}. The MLS particles are thus not `evenly' distributed through the use of the pseudo-random numbers as was the case in the previous simulations. Then, the MLS particles are advanced according to the FPE to the target distribution \(\rho_{\infty}\) that is represented by either a GMM or a KDE. We therefore emphasize that the MLS method does not explicitly map the point cloud \(\{X_i^0\}_{i=1}^N\) point-wise to \(\{X_j^\infty\}_{j=1}^M\). Instead, the MLS method maps the initial point cloud \(\{X_i^0\}_{i=1}^N\) to the target distribution \(\rho_\infty\). It is therefore important for the distribution \(\rho_\infty\) to fit the target point cloud \(\{X_j^\infty\}_{j=1}^M\) in the sense that the target point cloud \(\{X_j^\infty\}_{j=1}^M\) must behave like a sample from \(\rho_\infty\). The simulation is conducted with the MLS midpoint method with \(N_b = 25\) neighbors and a maximum time step of \(\Delta t_{\max} = 5 \times 10^{-4}\). We first perform a simulation using a GMM approximation of the target point cloud. The MLS point distribution as well as the initial and target GMMs are plotted in Figure \ref{fig:example2MLS} at several time steps. In Figure \ref{fig:psrInit}, we find that the initial GMM is visually a good approximation for the initial point cloud. Then, we observe that the initial distribution splits into two clouds that advance around the obstacle. We note that the asymmetry in the top and bottom particle clouds is due to the asymmetry of the initial condition. The two particle clouds merge when reaching the final distribution \(\rho_\infty\). Note that no ghost points are created during the simulation because the cloud does not move close enough to the obstacle. At the final time, we clearly observe the four Gaussians of the final distribution \(\rho_\infty\) in the density plot. Some osmotic paths are plotted in Figure \ref{fig:psrPaths1}. We find that all particles first move outward and then `collapse' onto the final distribution.

The simulation is repeated using KDE approximations of both the initial and target densities. We emphasize that the initial positions of the MLS particles remain as before. The resulting paths are shown in Figure \ref{fig:psrPaths2}. In this case, ghost particles are created and the MLS particles interact with the boundary. We find that the KDE approximation of the initial cloud of points is visibly less accurate than the GMM approximation. In addition, the paths generated from this simulation are qualitatively very different from the GMM case. We conclude that the choice of density estimation procedure has a significant impact on the results if the FPE is used for point set registration. 
\begin{figure}
	\centering 
	\begin{subfigure}[b]{0.49\textwidth}
		\centering
		\includegraphics[width=\linewidth]{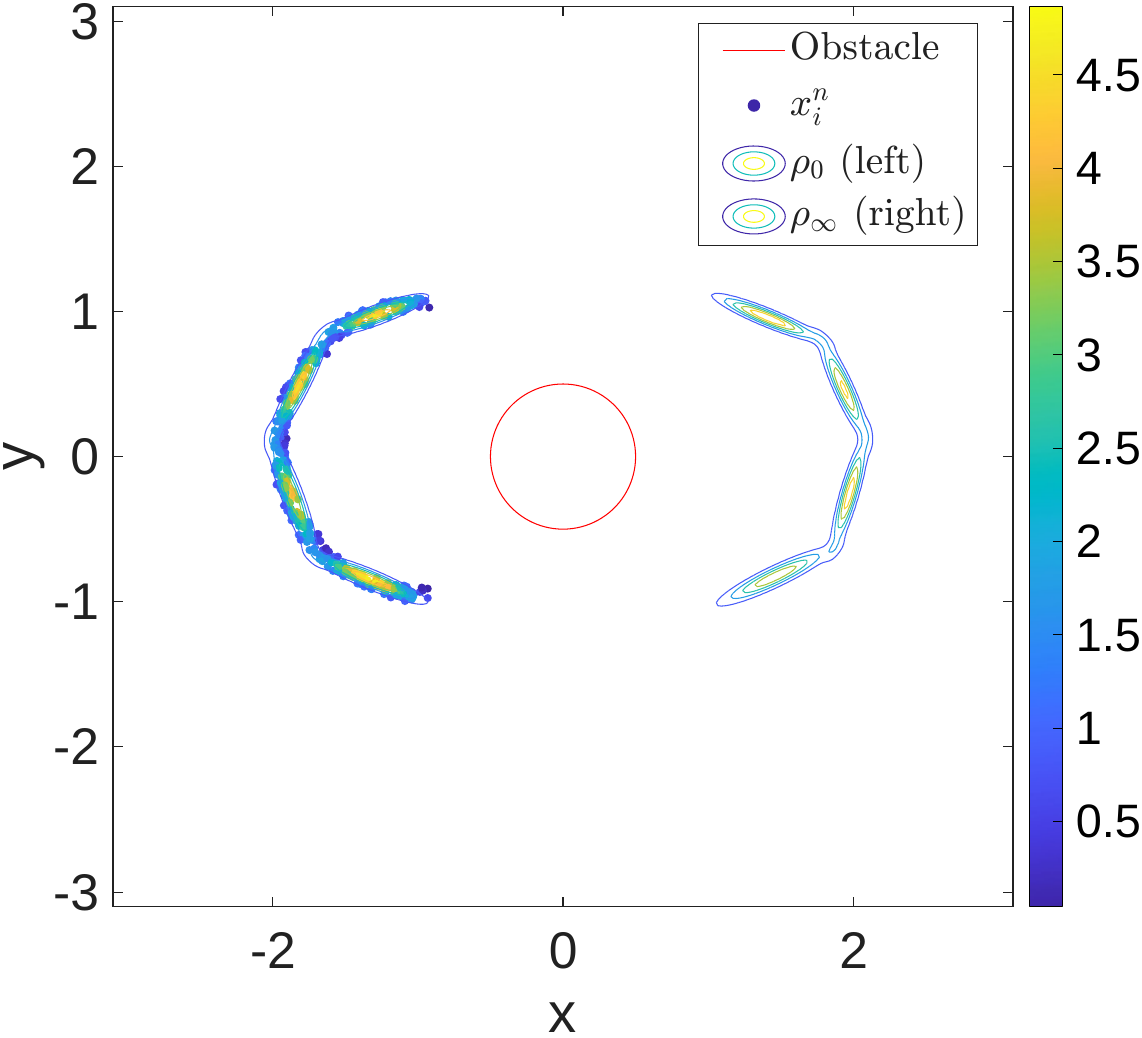}
		\caption{\(t = 0\).}
		\label{fig:psrInit}
	\end{subfigure}
	\begin{subfigure}[b]{0.49\textwidth}
		\centering
		\includegraphics[width=\linewidth]{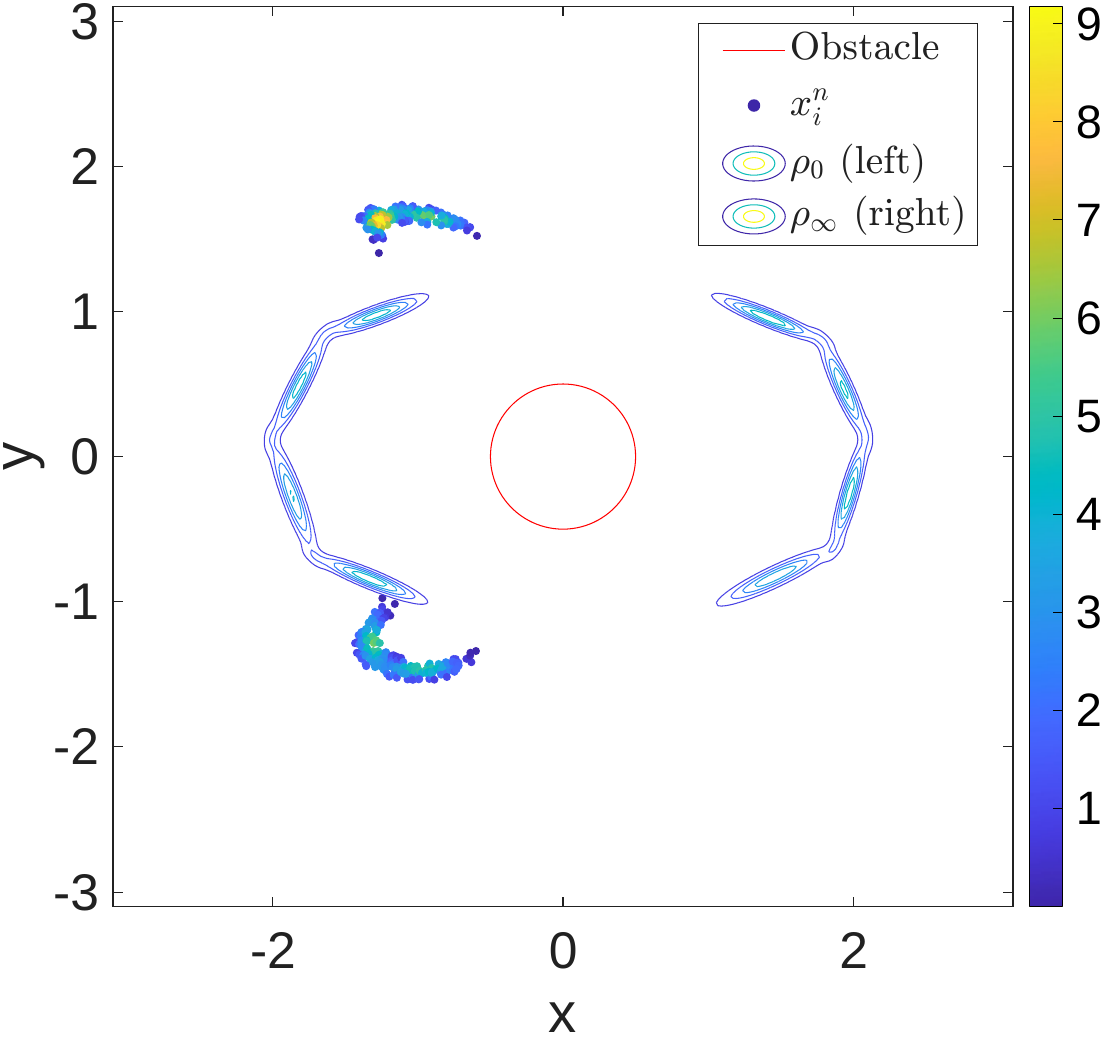}
		\caption{\(t = 1.55 \times 10^{-3}\).}
		\label{fig:psr1}
	\end{subfigure}	
	\begin{subfigure}[b]{0.49\textwidth}
		\centering
		\includegraphics[width=\linewidth]{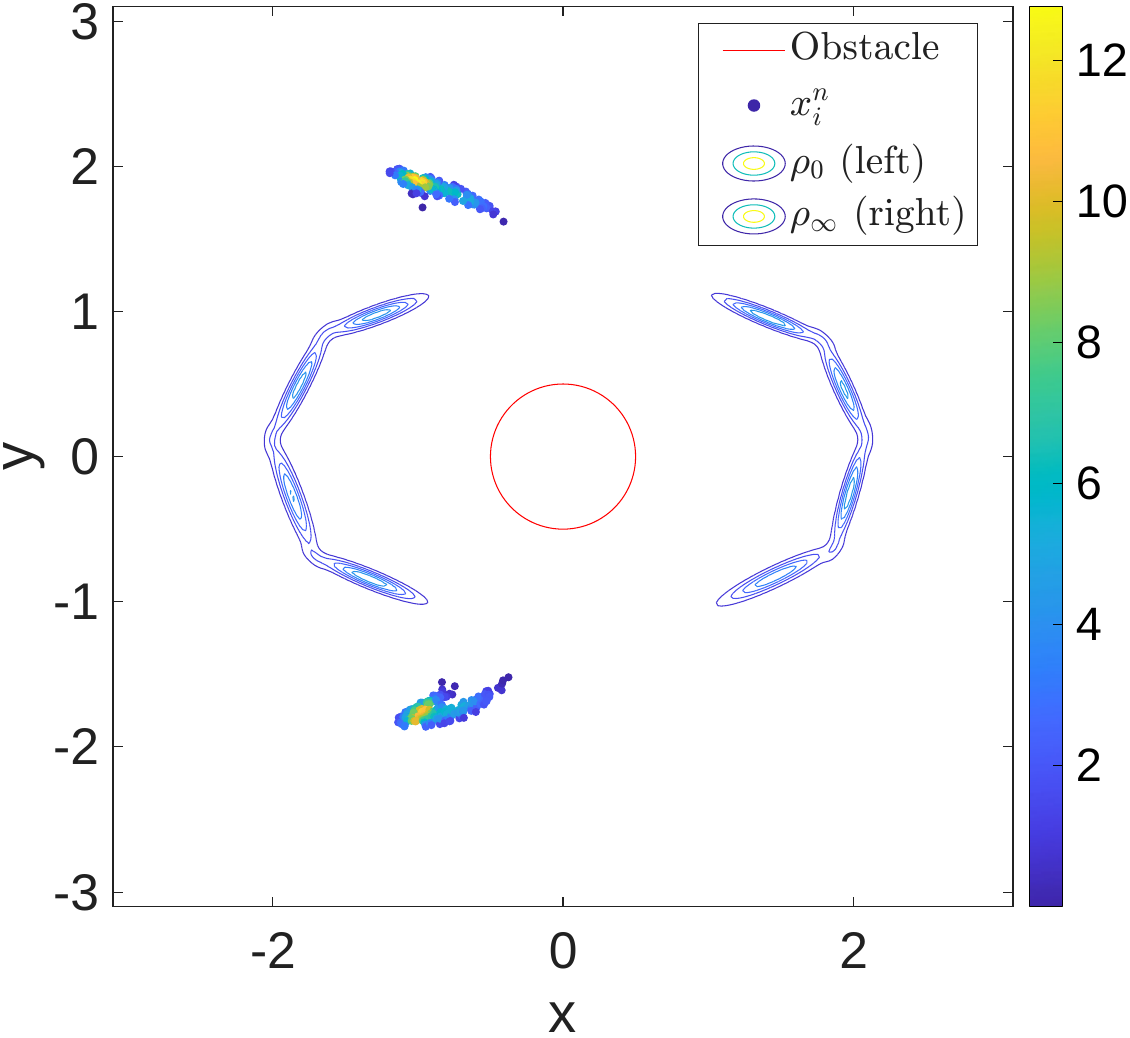}
		\caption{\(t = 3.82 \times 10^{-3}\).}
		\label{fig:psr2}
	\end{subfigure}
	\begin{subfigure}[b]{0.49\textwidth}
		\centering
		\includegraphics[width=\linewidth]{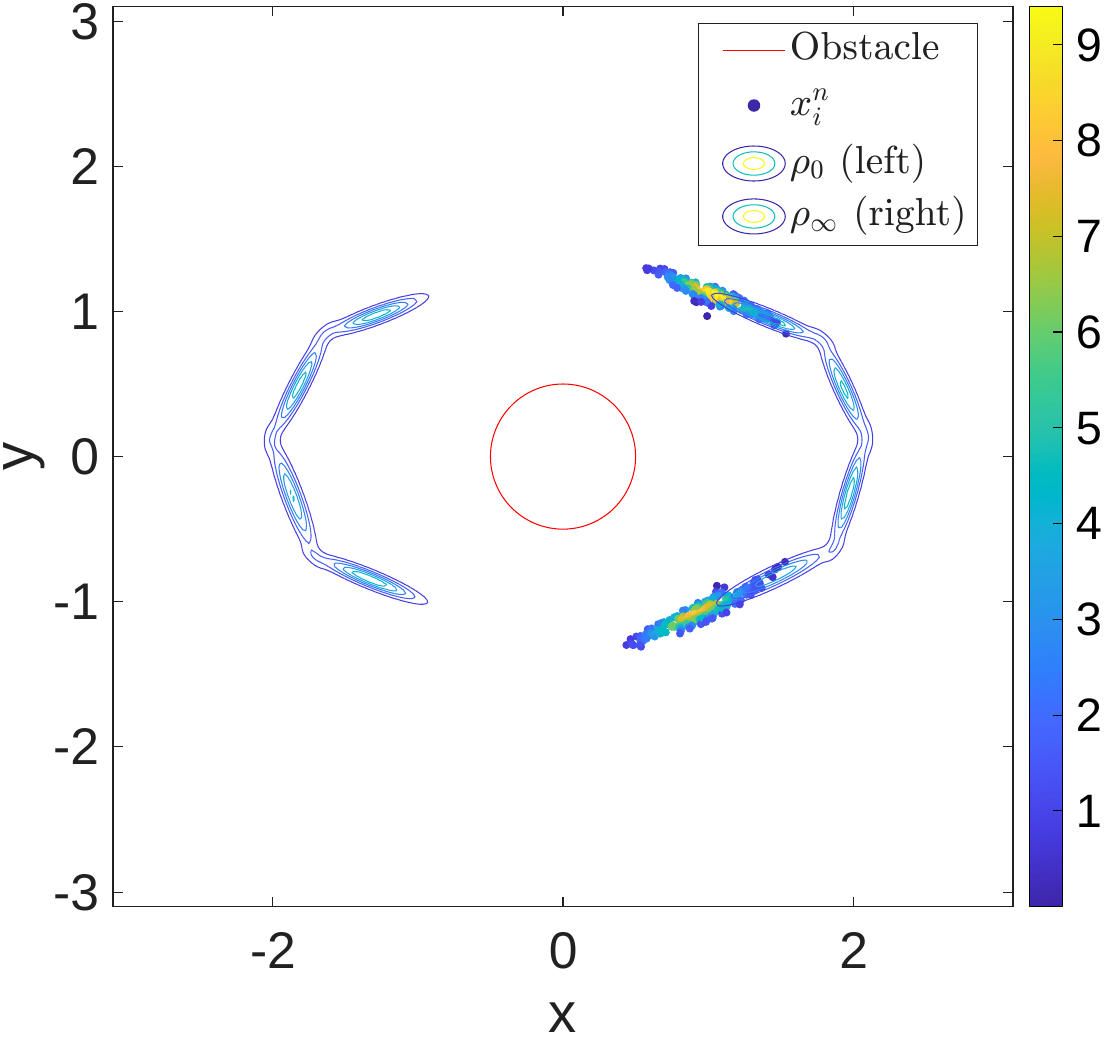}
		\caption{\(t = 9.36 \times 10^{-2}\).}
		\label{fig:psr3}
	\end{subfigure}
	\begin{subfigure}[b]{0.49\textwidth}
		\centering
		\includegraphics[width=\linewidth]{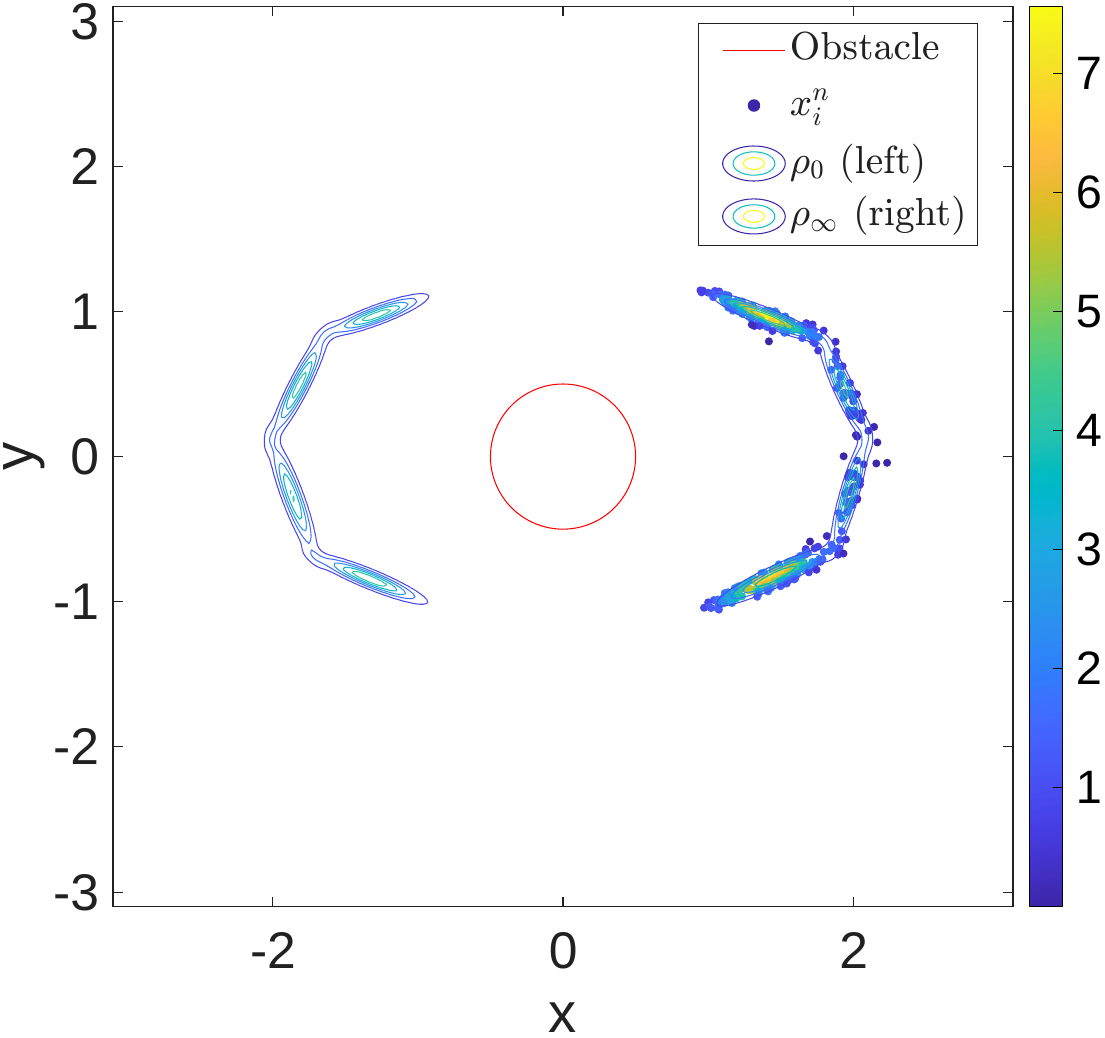}
		\caption{\(t = 3.18 \times 10^{-1}\).}
		\label{fig:psr4}
	\end{subfigure}
	\begin{subfigure}[b]{0.49\textwidth}
		\centering
		\includegraphics[width=\linewidth]{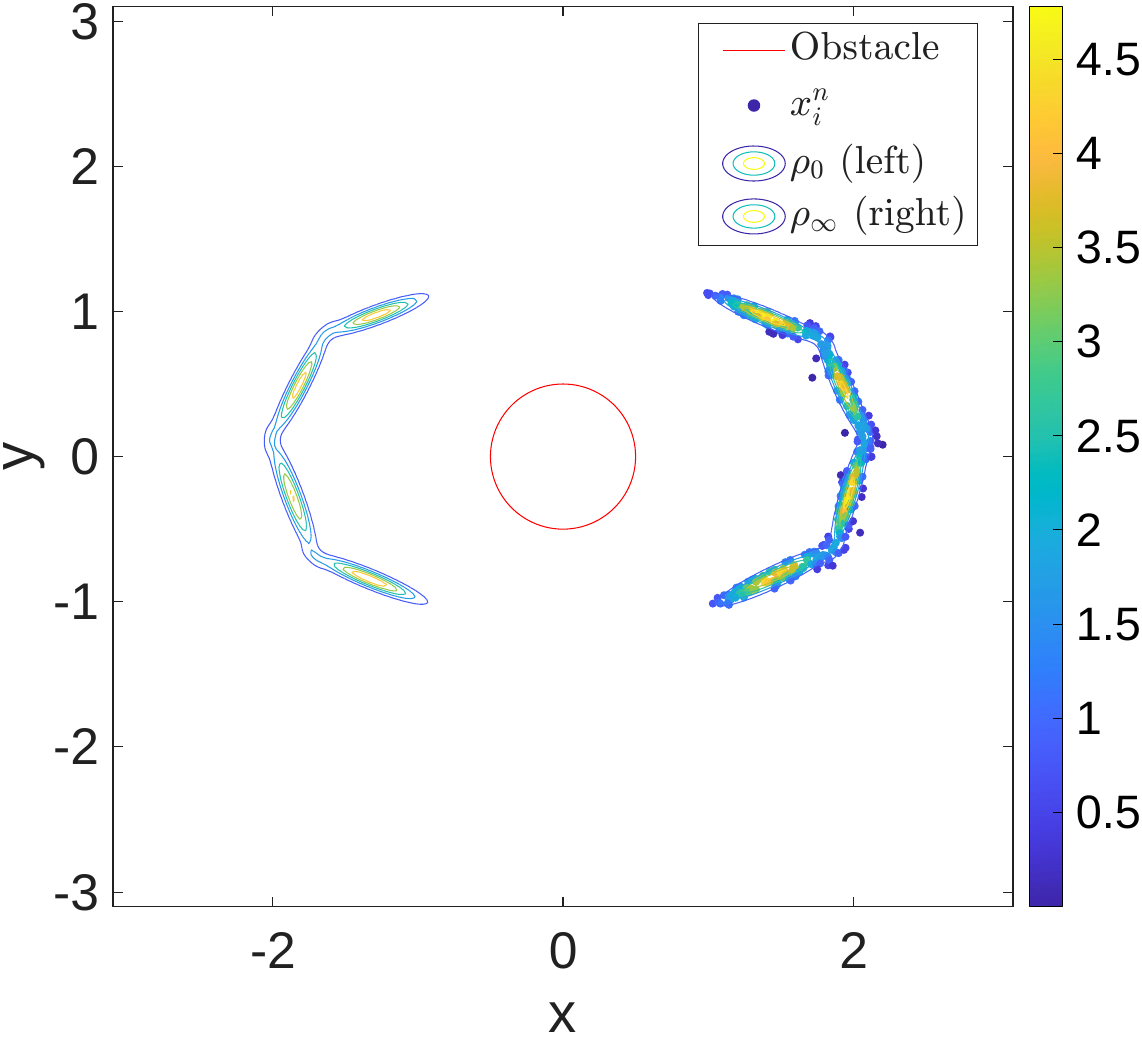}
		\caption{\(t = 1.2\).}
		\label{fig:psr5}
	\end{subfigure}
	\caption{Point-set registration across a cylinder example. We plot the initial distribution \(\rho_0\), the target distribution \(\rho_{\infty}\), and the cloud of particles at several times throughout the simulation.}
	\label{fig:example2MLS}
\end{figure}

\begin{figure}
	\centering 
	\begin{subfigure}[b]{0.49\textwidth}
		\centering
		\includegraphics[width=\linewidth]{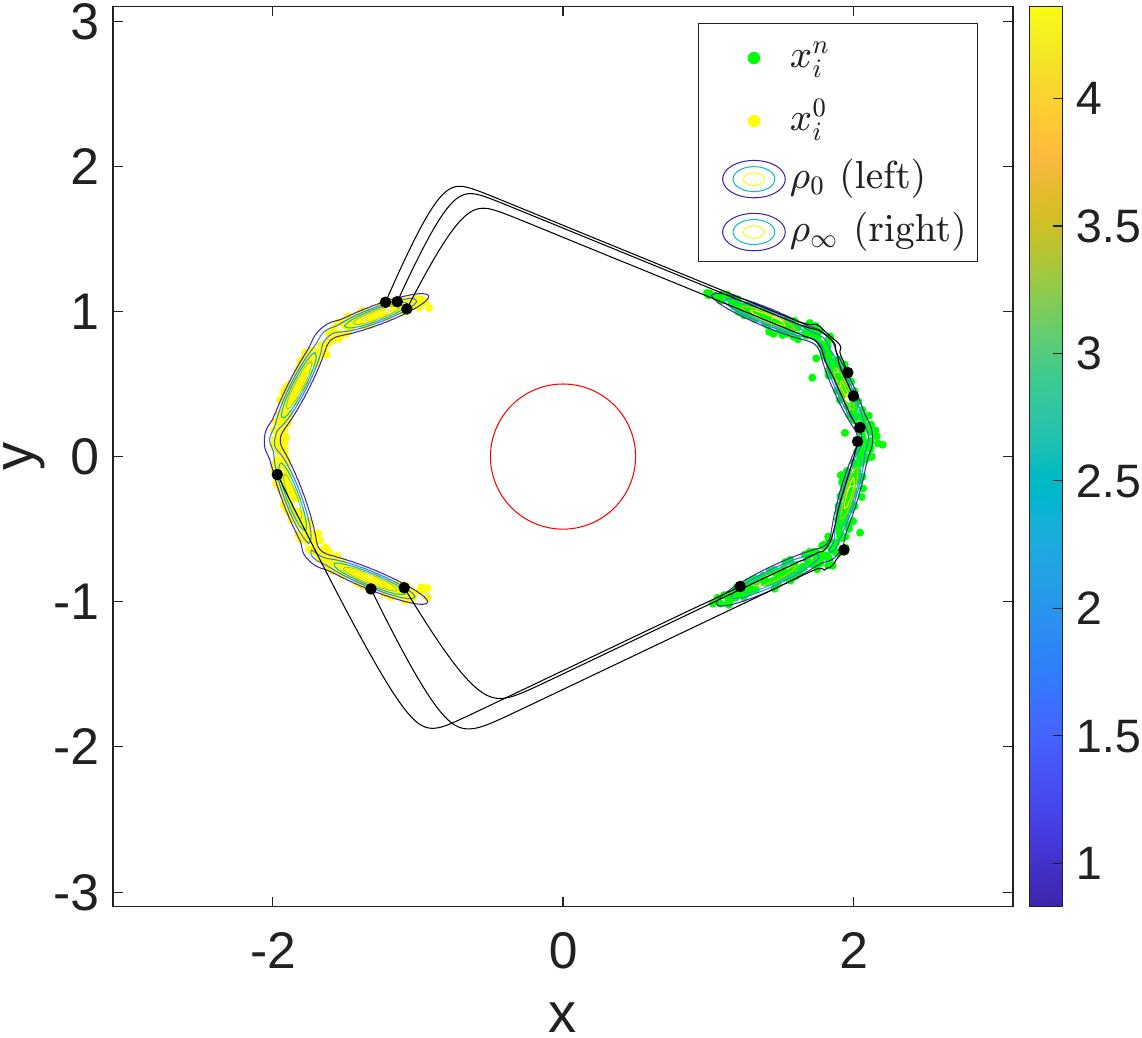}
		\caption{MLS method with \(\rho_{\infty}\) represented by a GMM.}
		\label{fig:psrPaths1}
	\end{subfigure}
	\begin{subfigure}[b]{0.49\textwidth}
		\centering
		\includegraphics[width=\linewidth]{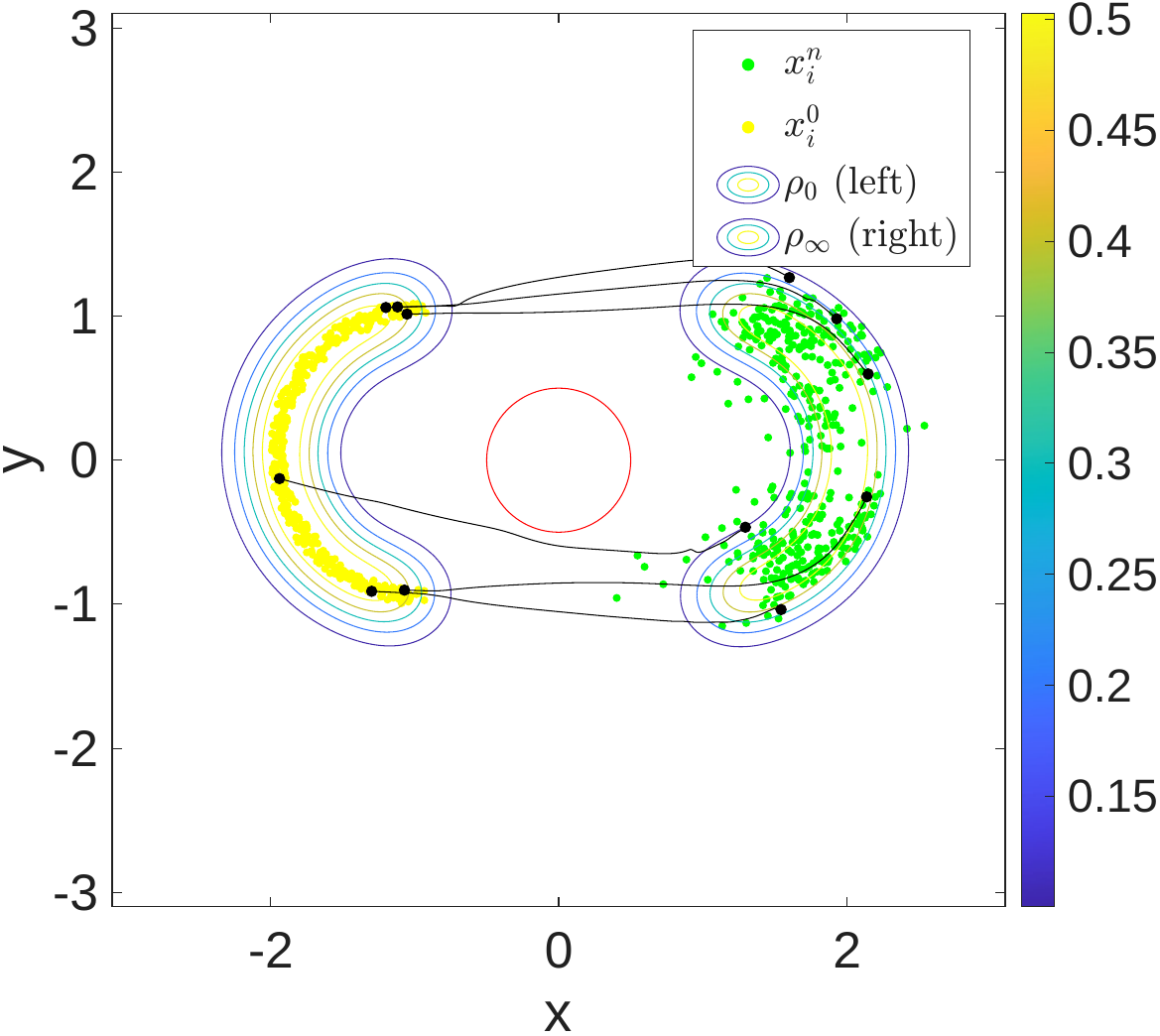}
		\caption{MLS method with \(\rho_{\infty}\) approximated with KDE.}
		\label{fig:psrPaths2}
	\end{subfigure}	
	\begin{subfigure}[b]{0.49\textwidth}
		\centering
		\includegraphics[width=\linewidth]{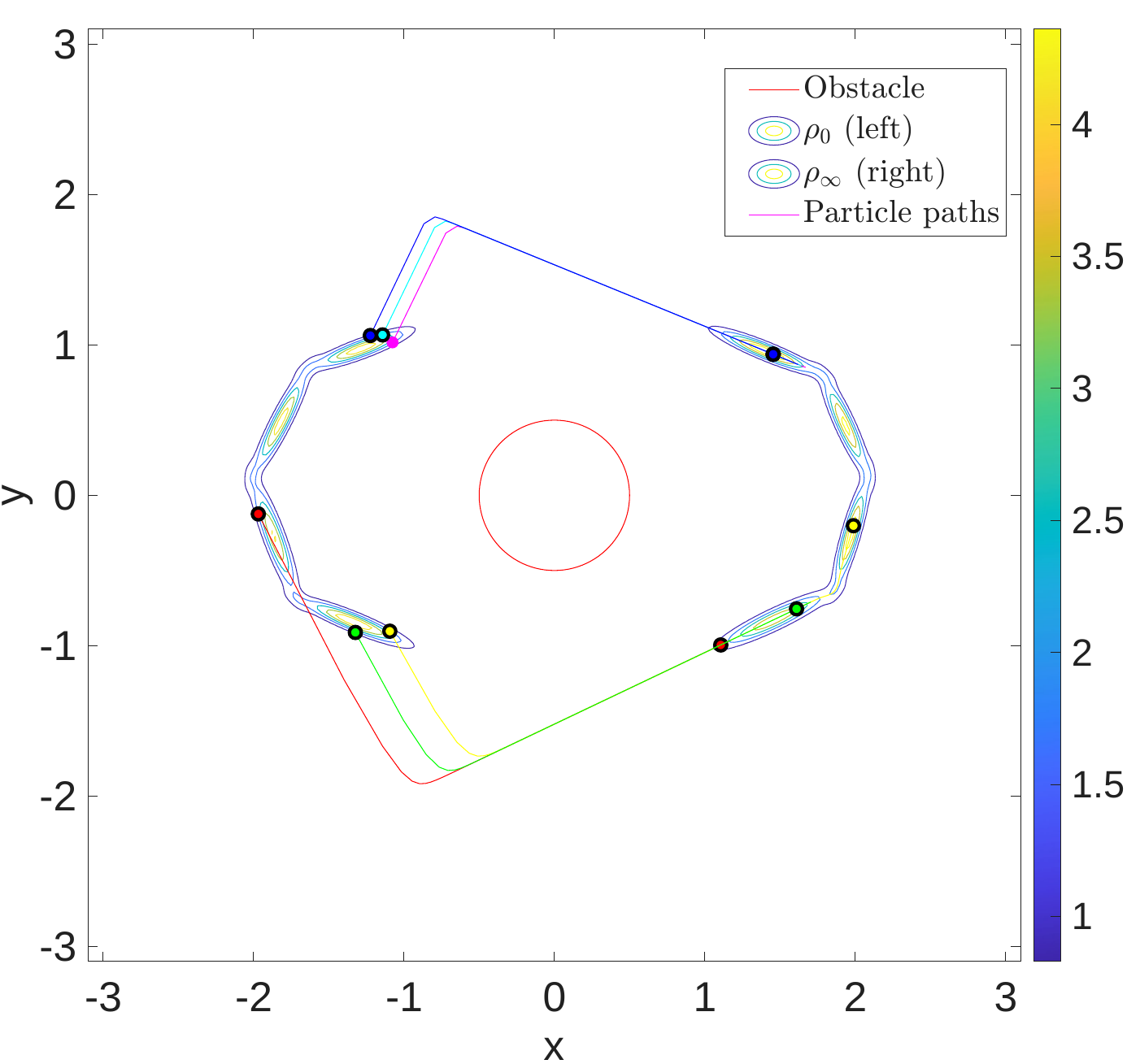}
		\caption{Monte Carlo method with \(\rho_{\infty}\) represented by a GMM.}
		\label{fig:psrPaths3}
	\end{subfigure}
	\caption{Several osmotic paths generated for the MLS method and the Monte Carlo method for different approximations of the initial and target distributions. In the case of the MLS method, the MLS particles are also plotted. In the case of the Monte Carlo method, for visibility, only the paths of the small regions are plotted. }
	\label{fig:psrPaths}
\end{figure}

\subsubsection{Stochastic method}
\label{section:PSRAcrossCylinderMC}
We repeat the simulation in which the point clouds are approximated with a GMM. We first sample \(5 \times 10^5\) particles from the GMM that approximates the initial point cloud. We then place six small regions of radius \(r_{\mathrm{init}} = 1.2 \times 10^{-2}\) at the initial positions of six selected MLS particles from Equation \eqref{eq:PSRInitPos}. The simulation is performed with a time step \(\Delta t = 1 \times 10^{-3}\) and the adaptive-radius method. The osmotic paths are plotted in Figure \ref{fig:psrPaths3}. The paths follow the same qualitative route as those obtained with the MLS method using the GMM, although their final center locations differ appreciably.

\subsection{Discussion}
For the purpose of point set registration, the two particle methods for the FPE presented here are viable options. The MLS method, although the more difficult of the two to implement, yields accurate results. Thanks to the use of an adaptive stencil, the method can handle test cases in which the point cloud undergoes strong expansion (Section \ref{section:ADTestCase}) and deformation (Section \ref{section:gaussianAcrossCylinder}). In addition, the ghost point method correctly enforces reflective boundary conditions, which enables point set registration in bounded domains. We do note that stability of meshfree methods remains an open question. There is no guarantee of the stability of meshfree discretizations for every possible point cloud. Hence, adaptive time stepping criteria such as the one presented in Section \ref{section:FirstOrderScheme} remain necessary. Contrary to meshfree methods, the stability of Monte Carlo methods for SDEs is well-understood. Moreover, the Monte Carlo method is effectively trivial to implement and very easily parallelizable, yielding potentially very efficient algorithms for point set registration. For the purpose of tracing the osmotic paths, the Monte Carlo method inherently introduces noise to the mapping of the point clouds. However, the results in Section \ref{section:DiffusionTestCase} demonstrate that the adaptive radius method substantially reduces this effect. Finally, we note that in regions with large density variations, the current mean-shift approach to estimate the score (Section \ref{section:gaussianAcrossCylinder}) is sensitive to the time step and size of the region. 

\section{Conclusion and Outlook}
\label{section:conclusion}
In this work, we propose two particle-based methods for point set registration in bounded domains, both grounded in the Fokker-Planck formulation of probability transport. By recasting the registration problem as the evolution of a probability density toward a prescribed target distribution, we obtain a continuous flow of mappings that naturally respects mass conservation and boundary constraints while avoiding explicit point-to-point matching.

The first method is a deterministic particle approach based on a moving least squares discretization of the S-formulation of the Fokker-Planck equation. By evolving particles according to the drift induced by the target distribution and resolving diffusion on a dynamically deforming particle cloud, this method directly produces coherent transport paths suitable for registration. A key feature of this approach is the use of a ghost point technique to impose reflective boundary conditions, which enables a consistent and robust treatment of walls and obstacles without the need for mesh generation or domain truncation. The numerical experiments demonstrate that the method can accurately capture strong compression, expansion, and splitting of particle clouds induced by both the drift and the domain geometry.

The second method adopts a stochastic viewpoint and relies on Monte Carlo simulation of the reflected Langevin dynamics associated with the Fokker-Planck equation. Rather than solving the PDE explicitly, this approach evolves large ensembles of particles and reconstructs macroscopic osmotic paths from the stochastic trajectories using a local approximation of the score of the evolving particle distribution. While inherently noisy, this method is straightforward to implement, embarrassingly parallel, and highly scalable, making it particularly attractive for high-dimensional settings where deterministic discretizations become prohibitively expensive.
The numerical results highlight that the two approaches are complementary. The MLS-based method offers higher accuracy and smoother trajectories at a relatively modest particle count, at the cost of additional algorithmic complexity and stability considerations. In contrast, the Monte Carlo method trades deterministic accuracy for simplicity and scalability and provides a viable alternative when large numbers of particles and parallel computing resources are available.

Several directions for future work naturally arise. On the deterministic side, adaptive strategies for stencil selection, time stepping, and particle refinement could further improve robustness in regions of strong deformation or near boundaries. On the stochastic side, an adaptive time-stepping procedure could enhance the quality of the recovered osmotic paths. In view of developing a practically viable point set registration method, a systematic investigation of different density estimation techniques and their influence on transport paths would also be of significant interest. Finally, an interesting direction for future research is the study of the Schrödinger bridge problem in bounded domains \cite{caluya_reflected_2021}.

\textbf{Acknowledgments: } This work was supported by the European Union’s Framework Program for Research and Innovation Horizon Europe under the Marie~Skłodowska-Curie Doctoral Networks action (HORIZON-MSCA-2021-DN-01), Grant Agreement No.~101072546 (DATAHYKING).

\clearpage
\appendix

\section{Density estimation}
\label{appendix:DensityEstimation}
The first step of our method consists of estimating the densities $\rho_0$ and $\rho_\infty$ that enter \eqref{eq:FPequation}.
The broad literature on density estimation ranges from classical statistical methods \cite{hastie2009elements} to recent methods closely linked to generative artificial
intelligence \cite{lai2025principles}. The choice of method critically depends on the number of data points, the dimension $d$, and the task of interest.
In this work, we target two- and three-dimensional domains and expect datasets of moderate size, $N,M \lesssim 10^3$. Therefore, in the numerical experiments, we consider
two methods:
kernel density estimation (KDE) and Gaussian mixture models (GMMs).

\subsection{Kernel density estimation}
We introduce the \emph{kernel} $\phi:\mathbb{R}_+ \to \mathbb{R}_+$ that satisfies
\begin{subequations}
	\label{eq:KDE}
	\begin{equation}
		\label{eq:KDEa}
		\phi(0) = 1, \quad
		c_{\phi,d} = |\mathbb{S}^{d-1}| \int_0^\infty r^{d-1} \phi(r) \, dr <\infty,
		\quad
		\phi'(r)<0 \quad
		\forall \, r>0.
	\end{equation}
		Next, given the $N$ samples $\mathcal{X} = \{x_i\}_{i=1}^N$,
		$x_i\overset{\rm iid}{\sim} \rho^\star$,
		we define the KDE:
	\begin{equation}
		\label{eq:KDEb}
		\widehat{\rho}_{N,\lambda}(x) = \dfrac{C_{N,\lambda}}{N \lambda^d} \sum_{i=1}^N \phi \left(
		\dfrac{\| x - x_i  \|_2}{\lambda}
		\right),
	\end{equation}
	Here, $|\mathbb{S}^{d-1}|$ denotes the surface measure of the unit sphere in $\mathbb{R}^d$, $\lambda>0$ is the kernel bandwidth, and $C_{N,\lambda}$ is chosen so that
	$\int_\Omega \widehat{\rho}_{N,\lambda}(x)\,dx=1$.
	If $\Omega=\mathbb{R}^d$, then $C_{N,\lambda}=1/c_{\phi,d}$. In a bounded domain, the normalization constant generally also depends on the sample locations because
	the kernels are truncated by the boundary.
	
\end{subequations}

For sufficiently smooth densities, the estimator converges in probability to the true density $\rho^\star$ under the standard conditions $\lambda\to0$ and
$N\lambda^d\to\infty$ as $N\to\infty$. The asymptotically optimal bandwidth scales as $\lambda\propto N^{-1/(d+4)}$ \cite{wasserman2006all}.
In practice, we use cross-validation to determine $\lambda$. We consider leave-one-out cross-validation (LOOCV) \cite{stone1974cross} and least-squares cross-validation
(LSCV) \cite[Chapter~6.1]{wasserman2006all}, which lead to similar results in the experiments of Section \ref{section:numerics}.

The choice of the kernel function $\phi$ is important for both approximation and stability. The kernel should be sufficiently regular to ensure that its gradient is Lipschitz continuous and should be strictly positive within its support. Its tail behavior also plays a major role in the numerical approximation of the FPE. For rapidly decaying kernels, if a unique closest sample dominates the sum in the far field, then \eqref{eq:KDEb} can be approximated by
$$
\widehat{\rho}_{N,\lambda}(x) \approx \dfrac{C_{N,\lambda}}{N \lambda^d} \phi \left(
\dfrac{\| x - x_{i^\star}  \|_2}{\lambda}
\right),
\quad
i^\star = {\rm arg}\min_{i=1,\ldots,N} \| x - x_i \|_2
$$
which implies
$$
\nabla \log \widehat{\rho}_{N,\lambda}(x)
\approx
S_\phi \left(
\dfrac{\| x - x_{i^\star}  \|_2}{\lambda}
\right)
\dfrac{  x - x_{i^\star}   }{\lambda \| x - x_{i^\star}  \|_2},
\quad
{\rm where} \;\;
S_\phi(r) = \dfrac{\phi'(r)}{\phi(r)},
$$
and thus
\begin{equation}
	\label{eq:estimate_gradient_farfield}
		\|  \nabla \log \widehat{\rho}_{N,\lambda}(x) \|_2
		\approx
		\dfrac{1}{\lambda}
		\left|S_\phi  \left(
		\dfrac{ {\rm dist} (x, \mathcal{X})  }{\lambda}
		\right)\right|.
\end{equation}
Table \ref{tab:kernels_nightmare} reports the expression of $S_\phi$
for widely used kernels. We observe markedly different behaviors of $S_\phi$ in the far field.

\begin{table}[t]
	\centering
	\begin{tabular}{|c|c|c|}
		\hline
		Kernel & $\phi(r)$ & $S_\phi(r)$  \\[2mm]
		\hline
		Gaussian & $e^{-r^2}$ &   $-2 r$  \\[2mm]
		\hline
		Matérn ($5/2$) & $ (1 + r  + \frac{1}{3} r^2) e^{-r} $ &
		$
		- \frac{r(r + 1)}{r^2 + 3r + 3}$  
		\\[2mm]
		\hline
		inverse multiquadric & $(1+ r^2)^{-\beta}$  &
		$-\frac{2\beta r}{r^2 + 1}$
		\\[2mm]
		\hline
	\end{tabular}
		\caption{Behavior of $S_\phi$ for several popular kernels.}
	\label{tab:kernels_nightmare}
\end{table}

For Gaussian kernels ($\phi(r) = e^{-r^2}$), we find $S_\phi(r) = - 2 r$: this implies that the advection term of the FPE satisfies
\begin{equation}
	\label{eq:tail_estimate_gaussian}
	\|  \nabla \log \widehat{\rho}_{N,\lambda}(x) \|_2
	\approx
		\dfrac{2\,{\rm dist} (x, \mathcal{X})}{\lambda^2}
\end{equation}
in the far field; on the other hand, for inverse multiquadric kernels
($\phi(r) = (1 + r^2)^{-\beta} $ with $\beta>0$), we find $S_\phi(r) = - \dfrac{2\beta r}{1+ r^2}$.
Although the nearest-sample approximation need not hold for this heavy-tailed kernel, a direct expansion of the full KDE shows that its score vanishes in the far field.
We leave a detailed comparison of kernel functions and their impact on point set registration to future work.

\subsection{Gaussian mixture models}

GMMs are widely used for density estimation in low dimensions. They provide a compact representation of a density in terms of $K$ multivariate Gaussian components,
\begin{equation}
	\label{eq:gmm}
	\widehat{\rho}_{\rm gmm}(x)
	=
	\sum_{k=1}^K \;
	\pi_k \mathcal{N}(x; \mu_k, \Sigma_k),
\end{equation}
where the weights $0\leq \pi_1,\ldots,\pi_K \leq 1$ satisfy $\sum_k \pi_k = 1$, the means satisfy
$\{ \mu_k \}_k \subset \mathbb{R}^d$, and the covariance matrices satisfy $\{ \Sigma_k \}_k \subset \mathbb{S}_+^d$, where $\mathbb{S}_+^d$ denotes the set of symmetric positive definite matrices.
As in \cite{iollo2025point}, we use the MATLAB function \texttt{fitgmdist} to fit the GMMs and the Akaike information criterion (AIC)
\cite[Chapter~7]{hastie2009elements} to select $K$. To improve numerical robustness, we set the \texttt{RegularizationValue} option of \texttt{fitgmdist} to \(10^{-4}\).

\begin{remark}
	As in the KDE case, Gaussian mixture variants with flexible far-field tail behavior can be considered; see, for example, \cite{wraith2015location}.
	Here, our primary focus is to simplify the approximation of solutions to \eqref{eq:FPequation}; in this respect, our approach shares the objective of
	\cite{de2025improving}.
\end{remark}

\section{GMM parameters}
\label{appendix:PSRGMM}
In Section \ref{section:PSRAcrossCylinder}, the initial and target distributions are each approximated by a GMM.
The weights, means, and covariance matrices of each component are listed below.
All reported values are rounded to four decimal places; the simulations use the full-precision parameters provided in the accompanying code~\cite{Willems_Moving_Least_Squares}.
Consequently, the displayed mixture weights may not sum exactly to one and should be renormalized if used directly.

\subsection{Initial distribution}

Weights: $\mathbf{w}_0 = [0.2507, 0.2559, 0.2700, 0.2235]^\top$.

Means:
\[
\boldsymbol{\mu}_{0,1} = \begin{pmatrix} -1.8187 \\ 0.4900 \end{pmatrix}, \quad
\boldsymbol{\mu}_{0,2} = \begin{pmatrix} -1.3318 \\ -0.8428 \end{pmatrix}, \quad
\boldsymbol{\mu}_{0,3} = \begin{pmatrix} -1.8621  \\ -0.2770 \end{pmatrix}, \quad
\boldsymbol{\mu}_{0,4} = \begin{pmatrix} -1.2859 \\ 0.9745 \end{pmatrix}.
\]

Covariances:
\[
\boldsymbol{\Sigma}_{0,1} = \begin{pmatrix} 0.0119 & 0.0205 \\ 0.0205 & 0.0415 \end{pmatrix}, \quad
\boldsymbol{\Sigma}_{0,2} = \begin{pmatrix} 0.0450 & -0.0179 \\ -0.0179 & 0.0086 \end{pmatrix},
\]
\[
\boldsymbol{\Sigma}_{0,3} = \begin{pmatrix} 0.0086 & -0.0187 \\ -0.0187 & 0.0523 \end{pmatrix}, \quad
\boldsymbol{\Sigma}_{0,4} = \begin{pmatrix} 0.0376 & 0.0134 \\ 0.0134 & 0.0063 \end{pmatrix}.
\]

\subsection{Target distribution}

Weights: $\mathbf{w}_\infty = [0.2424, 0.2439, 0.2347, 0.2789]^\top$.

Means:
\[
\boldsymbol{\mu}_{\infty,1} = \begin{pmatrix} 1.9739 \\ -0.2523 \end{pmatrix}, \quad
\boldsymbol{\mu}_{\infty,2} = \begin{pmatrix} 1.4127 \\ 0.9556 \end{pmatrix}, \quad
\boldsymbol{\mu}_{\infty,3} = \begin{pmatrix} 1.9349 \\ 0.4663 \end{pmatrix}, \quad
\boldsymbol{\mu}_{\infty,4} = \begin{pmatrix} 1.4609 \\ -0.8263 \end{pmatrix}.
\]

Covariances:
\[
\boldsymbol{\Sigma}_{\infty,1} = \begin{pmatrix} 0.0049 & 0.0130 \\ 0.0130 & 0.0462 \end{pmatrix}, \quad
\boldsymbol{\Sigma}_{\infty,2} = \begin{pmatrix} 0.0425 & -0.0169 \\ -0.0169 & 0.0080 \end{pmatrix},
\]
\[
\boldsymbol{\Sigma}_{\infty,3} = \begin{pmatrix} 0.0090 & -0.0168 \\ -0.0168 & 0.0383 \end{pmatrix}, \quad
\boldsymbol{\Sigma}_{\infty,4} = \begin{pmatrix} 0.0516 & 0.0238 \\ 0.0238 & 0.0130 \end{pmatrix}.
\]

\newpage
\printbibliography

@techreport{kuhnert_grid_2001,
	title = {Grid free method for solving the poisson equation},
	url = {https://publica.fraunhofer.de/handle/publica/291170},
	number = {25},
	urldate = {2024-11-15},
	institution = {Fraunhofer-Gesellschaft},
	author = {Kuhnert, J. and Tiwari, S.},
	year = {2001},
}

@article{ascher_implicit-explicit_1995,
	title = {Implicit-{Explicit} {Methods} for {Time}-{Dependent} {Partial} {Differential} {Equations}},
	volume = {32},
	issn = {0036-1429, 1095-7170},
	url = {http://epubs.siam.org/doi/10.1137/0732037},
	doi = {10.1137/0732037},
	language = {en},
	number = {3},
	urldate = {2023-10-17},
	journal = {SIAM Journal on Numerical Analysis},
	author = {Ascher, Uri M. and Ruuth, Steven J. and Wetton, Brian T. R.},
	month = jun,
	year = {1995},
	pages = {797--823},
}

@book{suchde_conservation_2018,
	address = {Stuttgart},
	title = {Conservation and accuracy in meshfree generalized finite difference methods},
	isbn = {978-3-8396-1325-2},
	language = {eng},
	publisher = {Fraunhofer Verlag},
	author = {Suchde, Pratik},
	collaborator = {{Fraunhofer-Institut für Techno- und Wirtschaftsmathematik}},
	year = {2018},
}

@book{seibold_m-matrices_2006,
	address = {Aachen},
	series = {Berichte aus der {Mathematik}},
	title = {M-matrices in meshless finite difference methods},
	isbn = {978-3-8322-5643-2},
	language = {eng},
	publisher = {Shaker},
	author = {Seibold, Benjamin},
	year = {2006},
}

@article{levin_approximation_1998,
	title = {The approximation power of moving least-squares},
	volume = {67},
	issn = {0025-5718, 1088-6842},
	url = {https://www.ams.org/mcom/1998-67-224/S0025-5718-98-00974-0/},
	doi = {10.1090/S0025-5718-98-00974-0},
	language = {en},
	number = {224},
	urldate = {2024-10-23},
	journal = {Mathematics of Computation},
	author = {Levin, David},
	year = {1998},
	pages = {1517--1531},
}

@article{clain_stencil_2024,
	title = {Stencil and kernel optimisation for mesh-free very high-order generalised finite difference method},
	volume = {218},
	issn = {03784754},
	url = {https://linkinghub.elsevier.com/retrieve/pii/S0378475423004706},
	doi = {10.1016/j.matcom.2023.11.009},
	language = {en},
	urldate = {2024-11-15},
	journal = {Mathematics and Computers in Simulation},
	author = {Clain, S. and Figueiredo, J.},
	month = apr,
	year = {2024},
	pages = {49--78},
}

@article{lancaster_surfaces_1981,
	title = {Surfaces generated by moving least squares methods},
	volume = {37},
	issn = {0025-5718, 1088-6842},
	url = {https://www.ams.org/mcom/1981-37-155/S0025-5718-1981-0616367-1/},
	doi = {10.1090/S0025-5718-1981-0616367-1},
	language = {en},
	number = {155},
	urldate = {2024-11-14},
	journal = {Mathematics of Computation},
	author = {Lancaster, P. and Salkauskas, K.},
	year = {1981},
	pages = {141--158},
}

@article{king_high-order_2022,
	title = {High-order simulations of isothermal flows using the local anisotropic basis function method ({LABFM})},
	volume = {449},
	issn = {00219991},
	url = {https://linkinghub.elsevier.com/retrieve/pii/S0021999121006550},
	doi = {10.1016/j.jcp.2021.110760},
	language = {en},
	urldate = {2025-10-24},
	journal = {Journal of Computational Physics},
	author = {King, J.R.C. and Lind, S.J.},
	month = jan,
	year = {2022},
	pages = {110760},
}

@article{king_high_2020,
	title = {High order difference schemes using the local anisotropic basis function method},
	volume = {415},
	issn = {00219991},
	url = {https://linkinghub.elsevier.com/retrieve/pii/S0021999120303235},
	doi = {10.1016/j.jcp.2020.109549},
	language = {en},
	urldate = {2025-12-03},
	journal = {Journal of Computational Physics},
	author = {King, J.R.C. and Lind, S.J. and Nasar, A.M.A.},
	month = aug,
	year = {2020},
	pages = {109549},
}

@article{wang_compact_2023,
	title = {Compact moving particle semi-implicit method for incompressible free-surface flow},
	volume = {414},
	issn = {00457825},
	url = {https://linkinghub.elsevier.com/retrieve/pii/S004578252300292X},
	doi = {10.1016/j.cma.2023.116168},
	language = {en},
	urldate = {2026-03-17},
	journal = {Computer Methods in Applied Mechanics and Engineering},
	author = {Wang, Zidi and Matsumoto, Toshinori and Duan, Guangtao and Matsunaga, Takuya},
	month = sep,
	year = {2023},
	pages = {116168},
}

@article{tamai_least_2014,
	title = {Least squares moving particle semi-implicit method},
	volume = {1},
	issn = {21964386},
	url = {https://linkinghub.elsevier.com/retrieve/pii/S2196438625008629},
	doi = {10.1007/s40571-014-0027-2},
	language = {en},
	number = {3},
	urldate = {2026-03-17},
	journal = {Computational Particle Mechanics},
	author = {Tamai, Tasuku and Koshizuka, Seiichi},
	month = sep,
	year = {2014},
	pages = {277--305},
}

@article{koshizuka_numerical_1998,
	title = {Numerical analysis of breaking waves using the moving particle semi-implicit method},
	volume = {26},
	copyright = {http://doi.wiley.com/10.1002/tdm\_license\_1.1},
	issn = {0271-2091, 1097-0363},
	url = {https://onlinelibrary.wiley.com/doi/10.1002/(SICI)1097-0363(19980415)26:7<751::AID-FLD671>3.0.CO;2-C},
	doi = {10.1002/(SICI)1097-0363(19980415)26:7<751::AID-FLD671>3.0.CO;2-C},
	language = {en},
	number = {7},
	urldate = {2026-03-17},
	journal = {International Journal for Numerical Methods in Fluids},
	author = {Koshizuka, Seiichi and Nobe, Atsushi and Oka, Yoshiaki},
	month = apr,
	year = {1998},
	pages = {751--769},
}

@article{jandaghian_enhanced_2020,
	title = {An enhanced weakly-compressible {MPS} method for free-surface flows},
	volume = {360},
	issn = {00457825},
	url = {https://linkinghub.elsevier.com/retrieve/pii/S0045782519306632},
	doi = {10.1016/j.cma.2019.112771},
	language = {en},
	urldate = {2026-03-17},
	journal = {Computer Methods in Applied Mechanics and Engineering},
	author = {Jandaghian, M. and Shakibaeinia, A.},
	month = mar,
	year = {2020},
	pages = {112771},
}

@article{BidautVeron2015HJ,
  author  = {Bidaut-V{\'e}ron, Marie-Fran{\c{c}}oise},
  title   = {Local and global estimates of solutions of Hamilton--Jacobi parabolic equation with absorption},
  journal = {Advances in Differential Equations},
  volume  = {20},
  number  = {11--12},
  pages   = {1077--1114},
  year    = {2015}
}

@article{myronenko2010point,
  title={Point set registration: Coherent point drift},
  author={Myronenko, Andriy and Song, Xubo},
  journal={IEEE transactions on pattern analysis and machine intelligence},
  volume={32},
  number={12},
  pages={2262--2275},
  year={2010},
  publisher={IEEE}
}

@article{yizong_cheng_mean_1995,
	title = {Mean shift, mode seeking, and clustering},
	volume = {17},
	copyright = {https://ieeexplore.ieee.org/Xplorehelp/downloads/license-information/IEEE.html},
	issn = {01628828},
	url = {http://ieeexplore.ieee.org/document/400568/},
	doi = {10.1109/34.400568},
	language = {en},
	number = {8},
	urldate = {2026-04-23},
	journal = {IEEE Transactions on Pattern Analysis and Machine Intelligence},
	author = {{Yizong Cheng}},
	month = aug,
	year = {1995},
	pages = {790--799},
}

@article{fukunaga_estimation_1975,
	title = {The estimation of the gradient of a density function, with applications in pattern recognition},
	volume = {21},
	copyright = {https://ieeexplore.ieee.org/Xplorehelp/downloads/license-information/IEEE.html},
	issn = {0018-9448, 1557-9654},
	url = {https://ieeexplore.ieee.org/document/1055330/},
	doi = {10.1109/TIT.1975.1055330},
	number = {1},
	urldate = {2026-04-23},
	journal = {IEEE Transactions on Information Theory},
	author = {Fukunaga, K. and Hostetler, L.},
	month = jan,
	year = {1975},
	pages = {32--40},
}

@article{razavi2025registration,
  title={Registration-based nonlinear model reduction of parametrized aerodynamics problems with applications to transonic {E}uler and {RANS} flows},
  author={Razavi, Alireza H and Yano, Masayuki},
  journal={Journal of Computational Physics},
  volume={521},
  pages={113576},
  year={2025},
  publisher={Elsevier}
}

@article{blanchard_accurately_2021,
	title = {Accurately computing the log-sum-exp and softmax functions},
	volume = {41},
	copyright = {http://creativecommons.org/licenses/by/4.0/},
	issn = {0272-4979, 1464-3642},
	url = {https://academic.oup.com/imajna/article/41/4/2311/5893596},
	doi = {10.1093/imanum/draa038},
	language = {en},
	number = {4},
	urldate = {2026-04-21},
	journal = {IMA Journal of Numerical Analysis},
	author = {Blanchard, Pierre and Higham, Desmond J and Higham, Nicholas J},
	month = oct,
	year = {2021},
	pages = {2311--2330},
}

@book{kloeden_numerical_1992,
	address = {Berlin, Heidelberg},
	title = {Numerical {Solution} of {Stochastic} {Differential} {Equations}},
	copyright = {http://www.springer.com/tdm},
	isbn = {978-3-642-08107-1 978-3-662-12616-5},
	url = {http://link.springer.com/10.1007/978-3-662-12616-5},
	doi = {10.1007/978-3-662-12616-5},
	urldate = {2026-04-23},
	publisher = {Springer Berlin Heidelberg},
	author = {Kloeden, Peter E. and Platen, Eckhard},
	year = {1992},
}

@inproceedings{ambrosio2017lecture,
  title={Lecture notes on the {DiP}erna--{L}ions theory in abstract measure spaces},
  author={Ambrosio, Luigi and Trevisan, Dario},
  booktitle={Annales de la Facult{\'e} des sciences de Toulouse: Math{\'e}matiques},
  volume={26},
  number={4},
  pages={729--766},
  year={2017}
}

@book{oksendal_sde,
  author = {Øksendal, Bernt},
  title = {Stochastic Differential Equations},
  publisher = {Springer},
  edition = {6th},
  year = {2003}
}

@book{pavliotis_stochastic,
  author = {Pavliotis, Grigorios A.},
  title = {Stochastic Processes and Applications},
  publisher = {Springer},
  year = {2014}
}

@article{lions_sznitman_reflected,
  author = {Lions, P.-L. and Sznitman, A.-S.},
  title = {Stochastic differential equations with reflecting boundary conditions},
  journal = {Communications on Pure and Applied Mathematics},
  volume = {37},
  number = {4},
  pages = {511--537},
  year = {1984}
}

@article{stone1974cross,
  title={Cross-validatory choice and assessment of statistical predictions},
  author={Stone, Mervyn},
  journal={Journal of the royal statistical society: Series B (Methodological)},
  volume={36},
  number={2},
  pages={111--133},
  year={1974},
  publisher={Wiley Online Library}
}

@book{wasserman2006all,
  title={All of nonparametric statistics},
  author={Wasserman, Larry},
  year={2006},
  publisher={Springer}
}

@article{balakrishnan2019voxelmorph,
  title={Voxelmorph: a learning framework for deformable medical image registration},
  author={Balakrishnan, Guha and Zhao, Amy and Sabuncu, Mert R and Guttag, John and Dalca, Adrian V},
  journal={IEEE transactions on medical imaging},
  volume={38},
  number={8},
  pages={1788--1800},
  year={2019},
  publisher={IEEE}
}

@article{zhu2019review,
  title={A review of point set registration: From pairwise registration to groupwise registration},
  author={Zhu, Hao and Guo, Bin and Zou, Ke and Li, Yongfu and Yuen, Ka-Veng and Mihaylova, Lyudmila and Leung, Henry},
  journal={Sensors},
  volume={19},
  number={5},
  pages={1191},
  year={2019},
  publisher={MDPI}
}

@incollection{beier2023feature,
  title={Feature-based image metamorphosis},
  author={Beier, Thaddeus and Neely, Shawn},
  booktitle={Seminal Graphics Papers: Pushing the Boundaries, Volume 2},
  pages={529--536},
  year={2023}
}

@article{beg2005computing,
  title={Computing large deformation metric mappings via geodesic flows of diffeomorphisms},
  author={Beg, M Faisal and Miller, Michael I and Trouv{\'e}, Alain and Younes, Laurent},
  journal={International journal of computer vision},
  volume={61},
  pages={139--157},
  year={2005},
  publisher={Springer}
}

@article{iollo2014advection,
  title={Advection modes by optimal mass transfer},
  author={Iollo, Angelo and Lombardi, Damiano},
  journal={Physical Review E},
  volume={89},
  number={2},
  pages={022923},
  year={2014},
  publisher={APS}
}

@article{russo1990particle,
  title={A particle method for collisional kinetic equations. {I. B}asic theory and one-dimensional results},
  author={Russo, G},
  journal={Journal of Computational Physics},
  volume={87},
  number={2},
  pages={270--300},
  year={1990},
  publisher={Elsevier}
}

@article{wraith2015location,
  title={Location and scale mixtures of {G}aussians with flexible tail behaviour: Properties, inference and application to multivariate clustering},
  author={Wraith, Darren and Forbes, Florence},
  journal={Computational Statistics \& Data Analysis},
  volume={90},
  pages={61--73},
  year={2015},
  publisher={Elsevier}
}

@article{de2025improving,
  title={Improving the robustness of neural {ODE}s with minimal weight perturbation},
  author={De Marinis, A and Guglielmi, N and Sicilia, S and Tudisco, F},
  journal={arXiv preprint arXiv:2501.10740},
  year={2025}
}

@article{labatut2026parametric,
  title={Parametric vector flows for registration fields in bounded domains with applications to nonlinear interpolation of shock-dominated flows},
  author={Labatut, Jon and Chapelier, Jean-Baptiste and Iollo, Angelo and Taddei, Tommaso},
  journal={arXiv preprint arXiv:2601.22712},
  year={2026}
}

@article{iollo2025point,
  title={Point-set registration in bounded domains via the {F}okker--{P}lanck equation},
  author={Iollo, Angelo and Taddei, Tommaso},
  journal={Comptes Rendus. Math{\'e}matique},
  volume={363},
  number={G8},
  pages={809--824},
  year={2025}
}

@article{iollo2022mapping,
  title={Mapping of coherent structures in parameterized flows by learning optimal transportation with {G}aussian models},
  author={Iollo, Angelo and Taddei, Tommaso},
  journal={Journal of Computational Physics},
  volume={471},
  pages={111671},
  year={2022},
  publisher={Elsevier}
}

@article{taddei2020registration,
  title={A registration method for model order reduction: data compression and geometry reduction},
  author={Taddei, Tommaso},
  journal={SIAM Journal on Scientific Computing},
  volume={42},
  number={2},
  pages={A997--A1027},
  year={2020},
  publisher={SIAM}
}

@inproceedings{caluya_reflected_2021,
  author={Caluya, Kenneth F. and Halder, Abhishek},
  title={Reflected {S}chr{\"o}dinger Bridge: Density Control with Path Constraints},
  booktitle={2021 American Control Conference (ACC)},
  pages={1137--1142},
  year={2021},
  publisher={IEEE},
  doi={10.23919/ACC50511.2021.9482813}
}

@book{hastie2009elements,
  address = {New York, NY, USA},
  author = {Hastie, Trevor and Tibshirani, Robert and Friedman, Jerome},
  publisher = {Springer New York Inc.},
  series = {Springer Series in Statistics},
  title = {The Elements of Statistical Learning},
  year = 2009
}

@article{russo1990deterministic,
  title={Deterministic diffusion of particles},
  author={Russo, Giovanni},
  journal={Communications on Pure and Applied Mathematics},
  volume={43},
  number={6},
  pages={697--733},
  year={1990},
  publisher={Wiley Online Library}
}

@article{iollo_point-set_2025,
	title = {Point-set registration in bounded domains via the {Fokker}–{Planck} equation},
	volume = {363},
	copyright = {https://creativecommons.org/licenses/by/4.0/},
	issn = {1631-073X, 1778-3569},
	url = {https://comptes-rendus.academie-sciences.fr/mathematique/articles/10.5802/crmath.753/},
	doi = {10.5802/crmath.753},
	language = {en},
	number = {G8},
	urldate = {2025-11-10},
	journal = {Comptes Rendus. Mathématique},
	author = {Iollo, Angelo and Taddei, Tommaso},
	month = jul,
	year = {2025},
	pages = {809--824},
}

@article{lai2025principles,
  title={The principles of diffusion models},
  author={Lai, Chieh-Hsin and Song, Yang and Kim, Dongjun and Mitsufuji, Yuki and Ermon, Stefano},
  journal={arXiv preprint arXiv:2510.21890},
  year={2025}
}

@software{Willems_Moving_Least_Squares,
author = {Willems, Klaas and Taddei, Tommaso  and Iollo, Angelo and Russo, Giovanni },
license = {MIT},
title = {{Moving Least Squares Method for Point Set Registration}}
}

@book{karatzas_brownian_1998,
	address = {New York, NY},
	series = {Graduate {Texts} in {Mathematics}},
	title = {Brownian {Motion} and {Stochastic} {Calculus}},
	volume = {113},
	copyright = {http://www.springer.com/tdm},
	isbn = {978-0-387-97655-6 978-1-4612-0949-2},
	url = {http://link.springer.com/10.1007/978-1-4612-0949-2},
	doi = {10.1007/978-1-4612-0949-2},
	language = {en},
	urldate = {2026-04-15},
	publisher = {Springer New York},
	author = {Karatzas, Ioannis and Shreve, Steven E.},
	year = {1998},
}

@article{caflisch_monte_1998,
	title = {Monte {Carlo} and quasi-{Monte} {Carlo} methods},
	volume = {7},
	copyright = {https://www.cambridge.org/core/terms},
	issn = {0962-4929, 1474-0508},
	url = {https://www.cambridge.org/core/product/identifier/S0962492900002804/type/journal_article},
	doi = {10.1017/S0962492900002804},
	language = {en},
	urldate = {2026-02-10},
	journal = {Acta Numerica},
	author = {Caflisch, Russel E.},
	month = jan,
	year = {1998},
	pages = {1--49},
}

@book{niederreiter_random_1992,
	address = {Philadelphia, Pa},
	series = {Regional conference series in applied mathematics},
	title = {Random number generation and quasi- {Monte} {Carlo} methods},
	isbn = {978-0-89871-295-7},
	language = {eng},
	number = {63},
	publisher = {Society for Industrial and Applied Mathematics},
	author = {Niederreiter, Harald},
	year = {1992},
}

@book{kampen_stochastic_2007,
	address = {Amsterdam, The Netherlands Oxford, UK},
	edition = {3rd ed},
	series = {North-{Holland} personal library},
	title = {Stochastic processes in physics and chemistry},
	isbn = {978-0-444-52965-7},
	language = {eng},
	publisher = {Elsevier},
	author = {Kampen, N. G. van},
	year = {2007},
}

@article{cucchiara_model_2024,
	title = {Model order reduction by convex displacement interpolation},
	volume = {514},
	issn = {00219991},
	url = {https://linkinghub.elsevier.com/retrieve/pii/S0021999124004790},
	doi = {10.1016/j.jcp.2024.113230},
	language = {en},
	urldate = {2026-05-06},
	journal = {Journal of Computational Physics},
	author = {Cucchiara, Simona and Iollo, Angelo and Taddei, Tommaso and Telib, Haysam},
	month = oct,
	year = {2024},
	pages = {113230},
}

@misc{song_score-based_2020,
	title = {Score-{Based} {Generative} {Modeling} through {Stochastic} {Differential} {Equations}},
	copyright = {arXiv.org perpetual, non-exclusive license},
	url = {https://arxiv.org/abs/2011.13456},
	doi = {10.48550/ARXIV.2011.13456},
	urldate = {2026-05-06},
	publisher = {arXiv},
	author = {Song, Yang and Sohl-Dickstein, Jascha and Kingma, Diederik P. and Kumar, Abhishek and Ermon, Stefano and Poole, Ben},
	year = {2020},
	note = {Version Number: 2},
}

\end{document}